\numberwithin{figure}{section}
\theoremstyle{plain}
\newtheorem{theorem}{Theorem}[section]
\newtheorem{lemma}{Lemma}[section]
\newtheorem{condition}{Condition}[section]
\theoremstyle{definition}
\newtheorem{definition}{Definition}[section]
\newtheorem{remark}{Remark}[section]
\numberwithin{equation}{section}
\renewcommand{\Im}{{\rm Im\,}}
\renewcommand{\Re}{{\rm Re\,}}
\newcommand{\ind}{{\rm ind\,}}
\newcommand{\supp}{{\rm supp\,}}
\renewcommand{\ker}{{\rm ker\,}}
\renewcommand{\dim}{{\rm dim\,}}
\newcommand{\Orb}{{\rm Orb}}
\newcommand{\dist}{{\rm dist}}
\renewcommand{\phi}{{\varphi}}
\newcommand{\cK}{{\mathcal K}}
\newcommand{\cP}{{\mathcal P}}
\newcommand{\cL}{{\mathcal L}}
\newcommand{\cB}{{\mathcal B}}
\newcommand{\cO}{{\mathcal O}}
\newcommand{\cH}{{\mathcal H}}
\newcommand{\cS}{{\mathcal S}}
\newcommand{\cW}{{\mathcal W}}
\newcommand{\pG}{{\partial G}}
\newcommand{\oG}{{\overline G}}
\newcommand{\bP}{{\mathbf P}}
\newcommand{\bB}{{\mathbf B}}
\newcommand{\bS}{{\mathbf S}}
\newcommand{\bW}{{\mathbf W}}
\newcommand{\bu}{{\mathbf u}}
\newcommand{\bv}{{\mathbf v}}
\newcommand{\mimu}{{2m- m_{i\mu}-1/2}}
\newcommand{\bm}{{2m-{\mathbf m}-1/2}}
\newcommand{\mjsigma}{{2m- m_{j\sigma\mu}-1/2}}
\newcommand{\bbN}{{\mathbb N}}
\newcommand{\bbC}{{\mathbb C}}
\title{Smoothness of generalized solutions for higher-order elliptic equations with nonlocal boundary conditions}
\author{Pavel Gurevich~\thanks{This research was supported by
  the Alexander von
Humboldt Foundation and RFBR (project No.~07-01-00268).}}
\date{}
\begin{document}

\maketitle

\begin{abstract}
Smoothness of generalized solutions for higher-order elliptic
equations with nonlocal boundary conditions is studied in plane
domains. Necessary and sufficient conditions upon the right-hand
side of the problem and nonlocal operators under which the
generalized solutions possess an appropriate smoothness are
established.
\end{abstract}

\section{Introduction}

In 1932, Carleman~\cite{Carleman} considered the problem of finding a harmonic function,
in a plane bounded domain, satisfying a nonlocal condition which connects the values of
the unknown function at different points of the boundary. Further investigation of
elliptic problems with transformations mapping a boundary onto itself as well as with
abstract nonlocal conditions has been carried out by Vishik~\cite{Vishik},
Browder~\cite{Browder}, Beals~\cite{Beals}, Antonevich~\cite{Antonevich}, and others.

In 1969, Bitsadze and Samarskii~\cite{BitzSam} considered the following nonlocal problem
arising in the plasma theory: to find a function $u(y_1, y_2)$ harmonic on the
rectangular $G=\{y\in\mathbb R^2: -1<y_1<1,\ 0<y_2<1\}$, continuous on $\overline{G}$,
and satisfying the relations
 \begin{align*}
  u(y_1, 0)&=f_1(y_1),\quad u(y_1, 1)=f_2(y_1),\quad -1<y_1<1,\\
  u(-1, y_2)&=f_3(y_2),\quad u(1, y_2)=u(0, y_2),\quad 0<y_2<1,
 \end{align*}
where $f_1, f_2, f_3$ are given continuous functions. This problem was solved
in~\cite{BitzSam} by reducing it to a Fredholm integral equation and  using the maximum
principle. For arbitrary domains and  general nonlocal conditions, such a problem was
formulated as an unsolved one (see also~\cite{Sam, krall}). Different generalizations of
nonlocal problems with transformations mapping the boundary inside the closure of a
domain were studied by many authors~\cite{ZhEid,RSh,Kishk,GM}.

The most complete theory for elliptic equations of order $2m$ with general nonlocal
conditions was developed by Skubachevskii and his
students~\cite{SkMs83,SkMs86,SkDu90,SkDu91,SkJMAA,KovSk,SkBook,GurRJMP03}: a
classification with respect to types of nonlocal conditions was suggested, the Fredholm
solvability in the corresponding spaces was investigated, and asymptotics of solutions
near special conjugation points was obtained.

Note that, besides the plasma theory,   nonlocal elliptic problems have interesting
applications to biophysics and theory of diffusion
processes~\cite{Feller,Ventsel,SatoUeno,Taira,GalSkub}, control
theory~\cite{BensLions,Amann}, theory of functional differential equations,
mechanics~\cite{SkBook}, and so on.

The most difficult situation in the theory of nonlocal problems is that where the support
of nonlocal terms can intersect the boundary of a domain.  In this case, solutions of
nonlocal problems can have power-law singularities near some points of the boundary even
if the right-hand side is infinitely differentiable and the boundary is infinitely
smooth~\cite{SkMs86, SkRJMP, GurAdvDiffEq}. This gives rise to the question of
distinguishing some classes of nonlocal problems whose  solutions are sufficiently
smooth, provided that the right-hand side of the problem is smooth. Until now, this issue
was studied only for nonlocal perturbations of the Dirichlet problem for second-order
elliptic equations~\cite{SkRJMP, GurAdvDiffEq}.

In the present paper, we investigate  the smoothness of solutions for   {\it elliptic
equations of higher order with general nonlocal conditions in plane domains.\/} Unlike
the theory of elliptic problems in nonsmooth domains,   the violation of smoothness of
solutions for nonlocal problems is connected not only with the fact that the boundary may
contain singular points but rather with the presence of nonlocal terms in the boundary
conditions.

We illustrate some of the occurring phenomena with the following example. Let $\partial
G=\Gamma_1\cup\Gamma_2\cup\{g,h\}$, where $\Gamma_i$ are open (in the topology of
$\partial G$) $C^\infty$ curves; $g,h$ are the end points of the curves
$\overline{\Gamma_1}$ and $\overline{\Gamma_2}$. Suppose that the domain $G$ is the plane
angle of opening $\pi$ in some neighborhood of each of the points $g$ and $h$. We
deliberately take a smooth domain   to illustrate how the nonlocal terms can affect the
smoothness of solutions. Consider the following  problem in the domain $G$:
\begin{equation}\label{eqIntroPinG}
 \Delta u=f_0(y)\quad (y\in G),
\end{equation}
\begin{equation}\label{eqIntroBinG}
\begin{aligned}
&u|_{\Gamma_1}+b_1(y) u (\Omega_{1}(y) ) |_{\Gamma_1}+a(y) u (\Omega(y) )
|_{\Gamma_1}=f_1(y) & &
(y\in\Gamma_1),\\
&u|_{\Gamma_2}+b_2(y) u (\Omega_{2}(y) ) |_{\Gamma_2}=f_2(y) & &
 (y\in\Gamma_2).
\end{aligned}
\end{equation}
Here $b_1$, $b_2$, and $a$ are real-valued $C^\infty$ functions; $\Omega_i$ ($\Omega$)
are $C^\infty$ diffeomorphisms taking some neighborhood ${\mathcal O}_i$ (${\mathcal
O}_1$) of the curve $\Gamma_i$ ($\Gamma_1$) onto the set $\Omega_i({\mathcal O}_i)$
($\Omega({\mathcal O}_1)$) in such a way that $\Omega_i(\Gamma_i)\subset G$,
$\Omega_i(g)=g$, $\Omega_i(h)=h$, and the transformation $\Omega_i$, near the points $g,
h$, is the rotation of the boundary $\Gamma_i$ through the angle $\pi/2$ inwards the
domain $G$ (respectively, $\Omega(\Gamma_1)\subset G$,
$\overline{\Omega(\Gamma_1)}\cap\{g,h\}=\varnothing$, and the approach of the curve
$\Omega(\overline{\Gamma_1})$ to the boundary $\partial G$ can be arbitrary,
cf.~\cite{SkMs86, SkDu91}), see Fig.~\ref{figEx1}.
\begin{figure}[ht]
{ \hfill\epsfxsize100mm\epsfbox{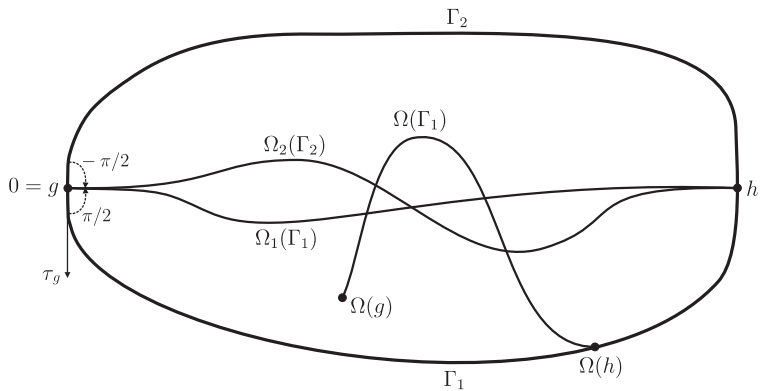}\hfill\ } \caption{Domain $G$ with boundary
$\partial G=\Gamma_1\cup\Gamma_2\cup\{g,h\}$.}
   \label{figEx1}
\end{figure}

We say that $g$ and $h$ are the {\em points of conjugation of nonlocal conditions}
because they divide the curves on which different nonlocal conditions are set. The
closure of the set
$$
\bigcup_{i=1,2}\{y\in\Omega_i(\Gamma_i):\
b_i(\Omega_i^{-1}(y))\ne0\}\cup\{y\in\Omega(\Gamma_1):\ a(\Omega^{-1}(y))\ne0\}
$$
is referred to as the {\em support of nonlocal terms}.

Denote by $W^k(G)=W^k_2(G)$ the Sobolev space. We say that a function $u\in W^1(G)$ is a
{\em generalized solution} of problem~\eqref{eqIntroPinG}, \eqref{eqIntroBinG} with
right-hand side $f_0\in L_2(G)$, $f_i\in W^{1/2}(\Gamma_i)$ if $u$ satisfies nonlocal
conditions~\eqref{eqIntroBinG} (the equalities are understood as those in
$W^{1/2}(\Gamma_i)$) and Eq.~\eqref{eqIntroPinG} in the sense of distributions. Assume
that $f_i\in W^{3/2}(\Gamma_i)$. Then one can show   that any generalized solution of
problem~\eqref{eqIntroPinG}, \eqref{eqIntroBinG} belongs to the space $W^2$ outside of an
arbitrarily small neighborhood of the points $g$ and $h$. Clearly, the behavior of
solutions near the points $g$ and $h$ is affected by the behavior of the coefficients
$b_1$, $b_2$, and $a$ near these points. However, the influence of the coefficients $b_i$
is principally different from that of the coefficient $a$. This phenomenon is explained
by the fact that the coefficients $b_i$ (for $y$ being in a small neighborhood of the
points $g$ and $h$) correspond to nonlocal terms supported {\em near} the set $\{g,h\}$
(in the general case, such terms correspond to operators $\mathbf B_{i\mu}^1$), whereas
the coefficient $a$ corresponds to a nonlocal term supported {\em outside} of some
neighborhood of the set $\{g,h\}$ (in the general case, such terms correspond to abstract
operators $\mathbf B_{i\mu}^2$).

It was proved in~\cite{GurAdvDiffEq}   that the smoothness of generalized solutions
preserves if $b_1(g)+b_2(g)\le-2$ or $b_1(g)+b_2(g)>0$ and can be violated if
$-2<b_1(g)+b_2(g)<0$. If $b_1(g)+b_2(g)=0$, we have the ``border'' case: the smoothness
of generalized solutions depends on the fulfillment of some integral  consistency
condition imposed on the right-hand sides $ f_i $ and the coefficients $b_i$.

Now we illustrate another phenomenon arising in the border case. Assume that
$b_1(y)\equiv b_2(y)\equiv 0$. Let $a(y)=0$ in some neighborhood of the point~$h$ and
$\Omega(g)\in G$. Then the {\em support of nonlocal terms lies strictly inside the
domain~$G$.} However, if $a(g)\ne0$ or $(\partial a/\partial\tau_g)|_{y=g}\ne0$, where
$\tau_g$ denotes the unit vector tangent to $\partial G$ at the point $g$, then the
smoothness of generalized solutions of problem~\eqref{eqIntroPinG}, \eqref{eqIntroBinG}
(even with homogeneous nonlocal conditions: $\{f_i\}=0$) can be violated.

The phenomena similar to the above  occur in the  case of elliptic equations of order
$2m$ with general nonlocal conditions, which we study in the present paper. In
Sec.~\ref{sectStatement}, we provide the setting of  nonlocal problem and introduce the
notion of a generalized solution $u\in W^\ell(G)$ of the   problem for any integral
$0\le\ell\le 2m-1$.

It turns out that the smoothness of generalized solutions essentially depends on the
location of eigenvalues and the structure of root functions of some auxiliary nonlocal
operator $\tilde \cL(\lambda)$, $\lambda\in\bbC$, corresponding to the conjugation
points.

Let  $\Lambda$ denote the set of all eigenvalues of
 $\tilde{\mathcal L}(\lambda)$ lying in the strip
$1-2m<\Im\lambda<1-\ell$ (this set might be empty). In Sec.~\ref{sectNoEigen} we assume
that the line  $\Im\lambda=1-2m$ has no eigenvalues of the operator  $\tilde{\mathcal
L}(\lambda)$ and find sufficient conditions on the eigenvalues from the set $\Lambda$
under which any generalized solution of nonlocal problem belongs to $W^{2m}(G)$.

In Sec.~\ref{sectProperEigen}, we investigate the ``border'' case in which the line
$\Im\lambda=1-2m$ contains the unique eigenvalue $i(1-2m)$ of   $\tilde{\mathcal
L}(\lambda)$ and this eigenvalue is proper (see Definition~\ref{defRegEigVal}). We show
that, under the same conditions on the eigenvalues of $\tilde\cL(\lambda)$ as in
Sec.~\ref{sectNoEigen}, the smoothness of generalized solutions preserves if and only if
the right-hand side of the problem and the coefficients at the nonlocal terms satisfy
some integral consistency conditions near the conjugation points.

In Sec.~\ref{sectImproperEigen}, we show that the sufficient conditions from the previous
sections are also necessary for any generalized solution to be smooth.

Some facts concerning the functional spaces and model nonlocal problems in plane angles
which we use throughout the paper are collected in Appendix.

   The results of this paper have been obtained during the author's work
at the research group of Professor J{\"a}ger (Heidelberg University) in the framework of
the project supported by the Humboldt Foundation. The author also expresses his gratitude
to Professor Skubachevskii for attention.

\section{Setting of Nonlocal Problems in Bounded Domains}\label{sectStatement}

\subsection{Setting of the Problem}\label{subsectStatement}

Let $X$ be a domain in $\mathbb R^n$, $n=1,2$. Denote by $C_0^\infty(X)$ the set of
functions infinitely differentiable on $\overline{ X}$ and compactly supported in $X$. If
$M$ is a union of finitely many points (for $n=1,2$) or curves (for $n=2$)  lying in
$\overline X$, we denote by $C_0^\infty(\overline X\setminus M)$ the set of functions
infinitely differentiable on $\overline{ X}$ and compactly supported in $\overline
X\setminus M$.

Let $G\subset{\mathbb R}^2$ be a bounded domain with boundary $\partial G$. Consider a
set ${\mathcal K}\subset\partial G$ consisting of finitely many points. Let $\partial
G\setminus{\mathcal K}=\bigcup\limits_{i=1}^{N}\Gamma_i$, where $\Gamma_i$ are open (in
the topology of $\partial G$) $C^\infty$ curves. Assume that the domain $G$ is a plane
angle in some neighborhood of each point $g\in{\mathcal K}$.

For an integral $k\ge0$, denote by $W^k(G)=W_2^k(G)$ the Sobolev space with the norm
$$
\|u\|_{W^k(G)}=\left(\sum\limits_{|\alpha|\le k}\int_G |D^\alpha u(y)|^2\,dy\right)^{1/2}
$$
(set $W^0(G)=L_2(G)$ for $k=0$), where $\alpha=(\alpha_1,\dots,\alpha_n)$,
$|\alpha|=\alpha_1+\dots+\alpha_n$, $D^\alpha=D_1^{\alpha_1}\dots D_n^{\alpha_n}$,
$D_j=-i\partial/\partial x_j$.

 For an integral
$k\ge1$, we introduce the space $W^{k-1/2}(\Gamma)$ of traces on a smooth curve
$\Gamma\subset\overline{ G}$ with the norm
$$
\|\psi\|_{W^{k-1/2}(\Gamma)}=\inf\|u\|_{W^k(G)}\quad (u\in W^k(G):\ u|_\Gamma=\psi).
$$

Along with Sobolev spaces, we will use weighted spaces (the Kondrat'ev spaces).
 Let $Q=\{y\in{\mathbb R}^2:\ r>0,\
|\omega|<\omega_0\}$,  $Q=\{y\in{\mathbb R}^2:\ 0<r<d,\ |\omega|<\omega_0\}$,
$0<\omega_0<\pi$, $d>0$, or $Q=G$. We denote by $\mathcal M$ the set $\{0\}$ in the first
and second cases and the set $\mathcal K$ in the third case. Introduce the space
$H_a^k(Q)=H_a^k(Q,\mathcal M)$ as a completion of the set $C_0^\infty(\overline{
Q}\setminus \mathcal M)$ with respect to the norm
$$
 \|u\|_{H_a^k(Q)}=\left(
    \sum_{|\alpha|\le k}\int_Q \rho^{2(a-k+|\alpha|)} |D^\alpha u(y)|^2 dy
                                       \right)^{1/2},
$$
where $a\in \mathbb R$, $k\ge 0$ is an integral, and $\rho=\rho(y)=\dist(y,\mathcal M)$.
For an integral $k\ge1$, denote by $H_a^{k-1/2}(\Gamma)$  the set of traces on a smooth
curve $\Gamma\subset\overline{ Q}$ with the norm
\begin{equation}\label{eqTraceNormH}
\|\psi\|_{H_a^{k-1/2}(\Gamma)}=\inf\|u\|_{H_a^k(Q)} \quad (u\in H_a^k(Q):\ u|_\Gamma =
\psi).
\end{equation}

Denote by ${\bf P}(y, D_y)={\bf P}(y,D_{y_1},D_{y_2})$ and $B_{i\mu s}(y, D_y)=B_{i\mu
s}(y,D_{y_1},D_{y_2})$ differential operators of order $2m$ and $m_{i\mu}$ ($m_{i\mu}\le
2m-1$), respectively, with complex-valued $C^\infty$ coefficients ($i=1, \dots, N;$
$\mu=1, \dots, m;$ $s=0, \dots, S_i$). Here $D_y=(D_{y_1},D_{y_2})$,
$D_{y_j}=-i\partial/\partial y_j$.

We assume  that the following condition holds for the operators ${\bf P}(y, D_y)$ and
$B_{i\mu 0}(y, D_y)$ (these operators will correspond to the ``local'' elliptic problem).

\begin{condition}\label{condElliptic}
The operator ${\bf P}(y, D_y)$ is properly elliptic on $\overline{G}$,  and the   system
$\{B_{i\mu 0}(y, D_y)\}_{\mu=1}^m$ satisfies the Lopatinsky condition with respect to the
operator  ${\bf P}(y, D_y)$ for all $i=1, \dots, N$ and $y\in\overline{\Gamma_i}$.
\end{condition}

We denote
$$
\bB_{i\mu}^0u=B_{i\mu 0}(y, D_y)u,\quad y\in\Gamma_i,\ i=1,\dots,N,\ \mu=1,,\dots,m.
$$

\smallskip

For any closed set $\mathcal M$, we denote its $\varepsilon$-neighborhood by $\mathcal
O_{\varepsilon}(\mathcal M)$, i.e.,
$$
\mathcal O_{\varepsilon}(\mathcal M)=\{y\in \mathbb R^2:\ \dist(y, \mathcal
M)<\varepsilon\},\qquad {\varepsilon}>0.
$$

Now we introduce operators corresponding to nonlocal terms supported near the set
$\mathcal K$. Let $\Omega_{is}$ ($i=1, \dots, N;$ $s=1, \dots, S_i$) be $C^\infty$
diffeomorphisms taking some neighborhood ${\mathcal O}_i$ of the curve
$\overline{\Gamma_i\cap\mathcal O_{{\varepsilon}}(\mathcal K)}$ to the set
$\Omega_{is}({\mathcal O}_i)$ in such a way that $\Omega_{is}(\Gamma_i\cap\mathcal
O_{{\varepsilon}}(\mathcal K))\subset G$ and
\begin{equation}\label{eqOmega}
\Omega_{is}(g)\in\mathcal K\quad\text{for}\quad g\in\overline{\Gamma_i}\cap\mathcal K.
\end{equation}
Thus, the transformations $\Omega_{is}$ take the curves $\Gamma_i\cap\mathcal
O_{{\varepsilon}}(\mathcal K)$ strictly inside the domain $G$ and the set of their end
points $\overline{\Gamma_i}\cap\mathcal K$ to itself.

Let us specify the structure of the transformations $\Omega_{is}$ near the set $\mathcal
K$. Denote by $\Omega_{is}^{+1}$ the transformation $\Omega_{is}:{\mathcal
O}_i\to\Omega_{is}({\mathcal O}_i)$ and by $\Omega_{is}^{-1}:\Omega_{is}({\mathcal
O}_i)\to{\mathcal O}_i$ the inverse transformation. The set of points
$\Omega_{i_qs_q}^{\pm1}(\dots\Omega_{i_1s_1}^{\pm1}(g))\in{\mathcal K}$ ($1\le s_j\le
S_{i_j},\ j=1, \dots, q$) is said to be an {\em orbit} of the point $g\in{\mathcal K}$
and denoted by $\Orb(g)$. In other words, the orbit $\Orb(g)$ is formed by the points (of
the set $\mathcal K$) that can be obtained by consecutively applying the transformations
$\Omega_{i_js_j}^{\pm1}$ to the point $g$.

It is clear that either $\Orb(g)=\Orb(g')$ or $\Orb(g)\cap\Orb(g')=\varnothing$ for any
$g, g'\in{\mathcal K}$. In what follows, we assume that the set $\mathcal K$ consists of
one orbit  (the results   are easy to generalize for the case in which $\mathcal K$
consists of finitely many disjoint orbits, cf. Sec.~6 in~\cite{GurAdvDiffEq}). To
simplify the notation, we also assume that the set (orbit) $\mathcal K$ consists of $N$
points: $g_1,\dots,g_N$.

Take a sufficiently small number $\varepsilon$ (cf. Remark~2.3 in~\cite{GurAdvDiffEq})
such that there exist neighborhoods $\mathcal O_{\varepsilon_1}(g_j)$, $ \mathcal
O_{\varepsilon_1}(g_j)\supset\mathcal O_{\varepsilon}(g_j) $, satisfying the following
conditions:
\begin{enumerate}
\item The domain $G$ is a plane angle in the neighborhood $\mathcal O_{\varepsilon_1}(g_j)$;
\item
$\overline{\mathcal O_{\varepsilon_1}(g_j)}\cap\overline{\mathcal
O_{\varepsilon_1}(g_k)}=\varnothing$ for any $g_j,g_k\in\mathcal K$, $k\ne j$;
\item If $g_j\in\overline{\Gamma_i}$ and
$\Omega_{is}(g_j)=g_k,$ then ${\mathcal O}_{\varepsilon}(g_j)\subset\mathcal
 O_i$ and
 $\Omega_{is}\big({\mathcal
O}_{\varepsilon}(g_j)\big)\subset{\mathcal O}_{\varepsilon_1}(g_k).$
\end{enumerate}

For each point $g_j\in\overline{\Gamma_i}\cap\mathcal K$, we fix a transformation $Y_j:
y\mapsto y'(g_j)$ which is a composition of the shift by the vector
$-\overrightarrow{Og_j}$ and the rotation through some angle so that
$$
Y_j({\mathcal O}_{\varepsilon_1}(g_j))={\mathcal O}_{\varepsilon_1}(0),\qquad
Y_j(G\cap{\mathcal O}_{\varepsilon_1}(g_j))=K_j\cap{\mathcal O}_{\varepsilon_1}(0),
$$
$$
Y_j(\Gamma_i\cap{\mathcal O}_{\varepsilon_1}(g_j))=\gamma_{j\sigma}\cap{\mathcal
O}_{\varepsilon_1}(0)\quad (\sigma=1\ \text{or}\ 2),
$$
where
$$
K_j=\{y\in{\mathbb R}^2:\ r>0,\ |\omega|<\omega_j\},\quad \gamma_{j\sigma}=\{y\in\mathbb
R^2:\ r>0,\ \omega=(-1)^\sigma \omega_j\}.
$$
Here $(\omega,r)$ are the polar coordinates and $0<\omega_j<\pi$.

Let   the following condition hold (see Fig.~\ref{figTransform}).
\begin{condition}\label{condK1}
Let $g_j\in\overline{\Gamma_i}\cap\mathcal K$ and $\Omega_{is}(g_j)=g_k\in\mathcal K;$
then the transformation
$$
Y_k\circ\Omega_{is}\circ Y_j^{-1}:{\mathcal O}_{\varepsilon}(0)\to{\mathcal
O}_{\varepsilon_1}(0)
$$
is the composition of   rotation and   homothety.
\end{condition}
\begin{figure}[ht]
{ \hfill\epsfxsize132mm\epsfbox{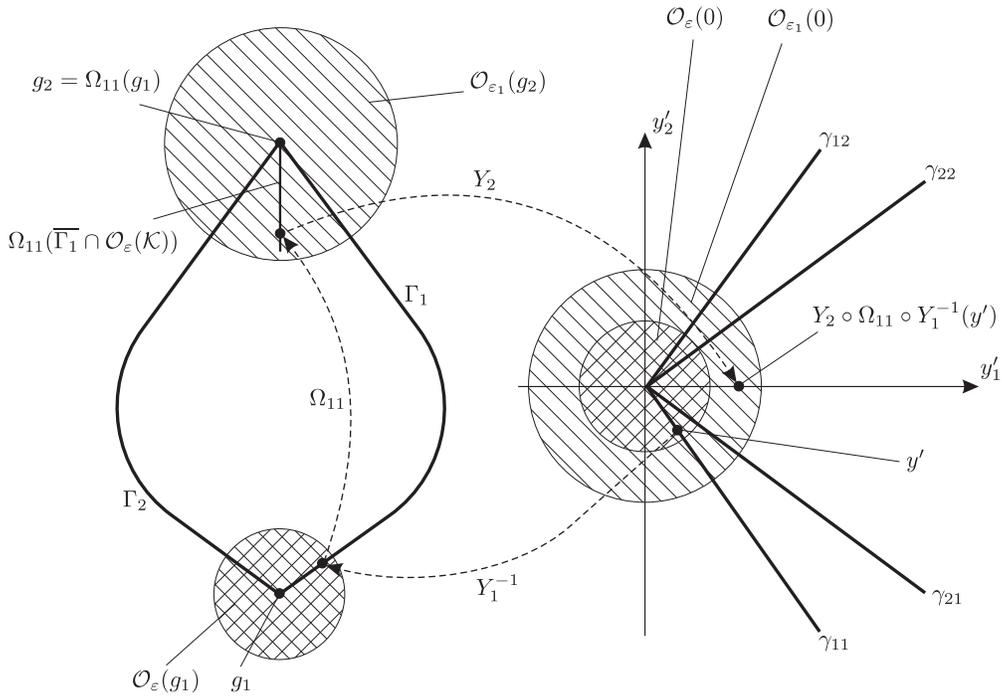}\hfill\ } \caption{The transformation $
Y_2\circ\Omega_{11}\circ Y_1^{-1}:{\mathcal O}_{\varepsilon}(0)\to{\mathcal
O}_{\varepsilon_1}(0) $ is a composition of rotation and homothety}
   \label{figTransform}
\end{figure}

\begin{remark}\label{remK1}
Condition~\ref{condK1}, together with the fact that $\Omega_{is}(\Gamma_i)\subset G$,
implies that if $g\in\Omega_{is}(\overline{\Gamma_i}\cap\mathcal
K)\cap\overline{\Gamma_j}\cap{\mathcal K}\ne\varnothing$, then the curves
$\Omega_{is}(\overline{\Gamma_i}\cap\mathcal O_\varepsilon(\mathcal K))$ and
$\overline{\Gamma_j}$ intersect at nonzero angle at the point $g$.
\end{remark}

We choose a number  $\varepsilon_0$, $0<\varepsilon_0\le\varepsilon$ possessing  the
following property: if $g_j\in\overline{\Gamma_i}$ and $\Omega_{is}(g_j)=g_k,$ then
${\mathcal O}_{\varepsilon_0}(g_k)\subset \Omega_{is} ({\mathcal O}_{\varepsilon}(g_j)
)\subset {\mathcal O}_{\varepsilon_1}(g_k)$. Consider a function $\zeta\in
C^\infty(\mathbb R^2)$ such that
$$
 \zeta(y)=1\ (y\in\mathcal O_{{\varepsilon_0}/2}(\mathcal K)),\quad
 \zeta(y)=0\ (y\notin\mathcal O_{{\varepsilon_0}}(\mathcal K)).
$$

Introduce the nonlocal operators $\mathbf B_{i\mu}^1$ by the formulas
$$
\begin{aligned}
 \mathbf B_{i\mu}^1u &=\sum\limits_{s=1}^{S_i}
   \big(B_{i\mu s}(y,
   D_y)(\zeta u)\big)\big(\Omega_{is}(y)\big), & &
   y\in\Gamma_i\cap\mathcal O_{\varepsilon}(\mathcal K), \\
 \mathbf B_{i\mu}^1u &=0, & & y\in\Gamma_i\setminus\mathcal
O_{\varepsilon}(\mathcal K),
\end{aligned}
$$
where $\big(B_{i\mu s}(y, D_y)u\big)\big(\Omega_{is}(y)\big)=B_{i\mu s}(x,
D_{x})u(x)|_{x=\Omega_{is}(y)}$. Since $\mathbf B_{i\mu}^1u=0$ for $\supp
u\subset\overline{ G}\setminus\overline{\mathcal O_{\varepsilon_0}(\mathcal K)}$, we say
that the operators $\mathbf B_{i\mu}^1$ \textit{correspond to nonlocal terms supported
near the set} $\mathcal K$.

\smallskip

Set $G_\rho=\{y\in G:  \dist(y,
\partial G)>\rho\}$ for $\rho>0$. Consider operators $\mathbf
B_{i\mu}^2$ satisfying the following condition (cf.~\cite{SkMs86,SkJMAA,GurRJMP03}).
\begin{condition}\label{condSeparK23}
There exist numbers $\varkappa_1>\varkappa_2>0$ and $\rho>0$ such that
\begin{equation}\label{eqSeparK23'}
  \|\mathbf B^2_{i\mu}u\|_{W^{2m-m_{i\mu}-1/2}(\Gamma_i)}\le c_1
  \|u\|_{W^{2m}(G\setminus\overline{\mathcal O_{\varkappa_1}(\mathcal
  K)})}\quad \forall u\in W^{2m}(G\setminus\overline{\mathcal
O_{\varkappa_1}(\mathcal
  K)}),
\end{equation}
\begin{equation}\label{eqSeparK23''}
  \|\mathbf B^2_{i\mu}u\|_{W^{2m-m_{i\mu}-1/2}
   (\Gamma_i\setminus\overline{\mathcal O_{\varkappa_2}(\mathcal K)})}\le
  c_2 \|u\|_{W^{2m}(G_\rho)} \quad \forall u\in
  W^{2m}(G_\rho),
\end{equation}
where $i=1, \dots, N$, $\mu=1, \dots, m$, and $c_1,c_2>0$ do not depend on $u$.
\end{condition}

It follows from~\eqref{eqSeparK23'} that $\mathbf B_{i\mu}^2u=0$ whenever $\supp u\subset
\mathcal O_{\varkappa_1}(\mathcal K)$. For this reason, we say that the operators
$\mathbf B_{i\mu}^2$ \textit{correspond to nonlocal terms supported outside the set}
$\mathcal K$.

\smallskip

We assume that Conditions~\ref{condElliptic}--\ref{condSeparK23} are fulfilled
throughout.

\smallskip

We study the following nonlocal elliptic boundary-value problem:
\begin{align}
 {\bf P}(y, D_y)u=f_0(y) \quad &(y\in G),\label{eqPinG}\\
     \mathbf B_{i\mu}^0 u+\mathbf B_{i\mu}^1 u+\mathbf B_{i\mu}^2 u=
   f_{i\mu}(y)\quad
    &(y\in \Gamma_i;\ i=1, \dots, N;\ \mu=1, \dots, m).\label{eqBinG}
\end{align}
Note that the points $g_j$ divide the curves on which different nonlocal conditions are
set; therefore, it is natural to say that $g_j$, $j=1,\dots,N$, are the {\em points of
conjugation of nonlocal conditions}.

Introduce the spaces of vector-valued functions
$$
\begin{aligned}
\cW^{\bm}(\pG)&=\prod\limits_{i=1}^N \prod\limits_{\mu=1}^m W^{\mimu}(\Gamma_i),\\
\cH_a^{\bm}(\pG)&=\prod\limits_{i=1}^N \prod\limits_{\mu=1}^m H_a^{\mimu}(\Gamma_i).
\end{aligned}
$$
We will always assume that $\{f_0,f_{i\mu}\}\in L_2(G)\times \cW^{\bm}(\pG)$.

From now on, we fix an integral number $\ell$ such that $0\le \ell\le2m-1$.

\begin{definition}\label{defGenSol2}
A function $u$ is called a {\em generalized solution} of problem~\eqref{eqPinG},
\eqref{eqBinG} with right-hand side $\{f_0,f_{i\mu}\}\in L_2(G)\times \mathcal
W^{\bm}(\partial G)$ if
\begin{equation}\label{eqSmoothOutsideK}
u\in W^\ell(G)\cap W^{2m}(G\setminus\overline{\cO_\delta(\cK)}) \quad \forall\delta>0
\end{equation}
 and $u$ satisfies relations~\eqref{eqPinG} a.e. and equalities  \eqref{eqBinG} in
$W^{2m-m_{i\mu}-1/2}(\Gamma_i\setminus\overline{\cO_\delta(\cK)})$ for all $ \delta>0.$
\end{definition}

Note that if $u$ satisfies~\eqref{eqSmoothOutsideK}, then $\mathbf B_{i\mu}^2 u\in
W^{2m-m_{i\mu}-1/2}(\Gamma_i)$ due to~\eqref{eqSeparK23'} and $\mathbf B_{i\mu}^1 u\in
W^{2m-m_{i\mu}-1/2}(\Gamma_i\setminus\overline{\cO_\delta(\cK)})$ for all $ \delta>0$.
Therefore, Definition~\ref{defGenSol2} does make sense.

\begin{remark}
Let $W^{-k}(G)$, $k\ge 1$,  denote the space adjoint  to $W^k(G) $ with respect to the
extension of the inner product in $L_2(G)$.

Denote  by $H_a^{-(k-1/2)}(\Gamma_i)$, $k\ge 1$, the space adjoint to
$H_{-a}^{k-1/2}(\Gamma_i)$ with respect to the extension of the inner product in
$L_2(\Gamma_i)$.

One can show that   $C^\infty(\overline{\Gamma_i})\subset H_\ell^{\ell-k+1/2}$,
$k=1,\dots,2m$. Therefore, the norm
\begin{equation}\label{eqTildeW}
\|\bu\|_{\bW^\ell(G)}=\left(\|\bu\|_{\bW^\ell(G)}^2+\sum\limits_{i=1}^N
\sum\limits_{k=1}^{2m} \left\|D_{\nu_i}^{k-1}\bu
\right\|_{H_\ell^{\ell-k+1/2}(\Gamma_i)}^2\right)^{1/2}
\end{equation}
is finite for any $\bu\in C^\infty(\oG)$, where $\nu_i$ is the outward normal to the
piece $\Gamma_i$ of the boundary and
$D_{\nu_i}^{k-1}\bu=(-i)^{k-1}\dfrac{\partial^{k-1}\bu}{\partial
\nu_i^{k-1}}\bigg|_{\Gamma_i}$. Denote by $\bW^\ell(G)$   the completion of
$C^\infty(\oG)$ in the norm~\eqref{eqTildeW}.

It follows from~\eqref{eqTildeW} that the closure $\bS$ of the mapping
$$
\bu\mapsto\{\bu|_G, D_{\nu_i}^{k-1}\bu\}\qquad (\bu\in C^\infty(\oG))
$$
establishes an isometric correspondence between $\bW^\ell(G)$ and a subspace of the
direct product
$$
W^\ell(G)\times \prod\limits_{i=1}^N\prod\limits_{k=1}^{2m}
H_\ell^{\ell-k+1/2}(\Gamma_i).
$$
We will identify $\bu\in\bW^\ell(G)$ with $\bS\bu$ and write $ \bu=\{u,u_{ik}\}\in
\bW^\ell(G). $

Then, similarly to~\cite{RMonog}, one can introduce the concept of a strong generalized
solution $\bu\in \bW^\ell(G)$ of problem~\eqref{eqPinG}, \eqref{eqBinG}. Moreover, one
can   prove that if~$\bu$ is a strong generalized solution, then the component $u \in
W^\ell(G) $ of the vector $\bu$ is a generalized solution in the sense of
Definition~\ref{defGenSol2}. Conversely, if $u  \in W^\ell(G) $ is a generalized solution
in the sense of Definition~\ref{defGenSol2},  then $\bu=\{u,D_{\nu_i}^{k-1}u\}$ belongs
to $\bW^\ell(G)$ and is a strong generalized solution. Furthermore, if the function $
\bv=\{u,v_{ik}\}\in\bW^\ell(G) $ (with the same first component $u$) is a strong
generalized solution, then $\bu=\bv$ i.e., a generalized solution uniquely determines a
strong generalized solution.
\end{remark}

\subsection{Model Problems}\label{subsectStatementNearK}

When studying problem~\eqref{eqPinG}, \eqref{eqBinG}, particular attention must be paid
to the behavior of solutions near the set ${\mathcal K}$ of conjugation points. In this
subsection, we consider corresponding model problems.

Denote by $u_j(y)$ the function $u(y)$ for $y\in{\mathcal O}_{\varepsilon_1}(g_j)$. If
$g_j\in\overline{\Gamma_i},$ $y\in{\mathcal O}_{\varepsilon}(g_j),$ and
$\Omega_{is}(y)\in{\mathcal O}_{\varepsilon_1}(g_k),$ then we denote the function
$u(\Omega_{is}(y))$ by $u_k(\Omega_{is}(y))$. In this notation, nonlocal
problem~(\ref{eqPinG}), (\ref{eqBinG}) acquires the following form in the
$\varepsilon$-neighborhood of the set (orbit) $\mathcal K$:
\begin{gather*}
 {\bf P}(y, D_y) u_j=f_0(y) \quad (y\in\mathcal O_\varepsilon(g_j)\cap
 G),\\
\begin{aligned}
B_{i\mu 0}(y, D_y)u_j(y)|_{\mathcal O_\varepsilon(g_j)\cap\Gamma_i}+
\sum\limits_{s=1}^{S_i} \big(B_{i\mu s}(y,D_y)(\zeta
u_k)\big)\big(\Omega_{is}(y)\big)\big|_{\mathcal O_\varepsilon(g_j)\cap\Gamma_i}
=\psi_{i\mu}(y) \\
\big(y\in \mathcal O_\varepsilon(g_j)\cap\Gamma_i;\ i\in\{1\le i\le N:
g_j\in\overline{\Gamma_i}\};\ j=1, \dots, N;\ \mu=1, \dots, m\big),
\end{aligned}
\end{gather*}
where
$$
\psi_{i\mu}=f_{i\mu}-\mathbf B_{i\mu}^2u.
$$

Let $y\mapsto y'(g_j)$ be the change of variables described in
Sec.~\ref{subsectStatement}. Set
$$
K_j^\varepsilon=K_j\cap\mathcal O_\varepsilon(0),\qquad
\gamma_{j\sigma}^\varepsilon=\gamma_{j\sigma}\cap\mathcal O_\varepsilon(0)
$$
and introduce the functions
\begin{equation}\label{eqytoy'}
\begin{gathered}
U_{j}(y')=u(y(y')),\quad F_{j}(y')=f_0(y(y')),\quad y'\in
K_{j}^\varepsilon,\\
F_{j\sigma\mu}(y')=f_{i\mu}(y(y')),\quad B_{j\sigma\mu}^u(y')=(\mathbf
B_{i\mu}^2u)(y(y')),\quad
y'\in\gamma_{j\sigma}^\varepsilon,\\
\Psi_{j\sigma\mu}(y')=F_{j\sigma\mu}(y')-B_{j\sigma\mu}^u(y'),\quad
y'\in\gamma_{j\sigma}^\varepsilon,
\end{gathered}
\end{equation}
where $\sigma=1$ $(\sigma=2)$ if the transformation $y\mapsto y'(g_j)$ takes $\Gamma_i$
to the side $\gamma_{j1}$ ($\gamma_{j2}$) of the angle $K_j$. Denote $y'$ by $y$ again.
Then, by virtue of Condition~\ref{condK1}, problem~\eqref{eqPinG}, \eqref{eqBinG}
acquires the form
\begin{gather}
  {\bf P}_{j}(y, D_y)U_j=F_{j}(y) \quad (y\in
  K_{j}^\varepsilon),\label{eqPinK}\\
\mathbf B_{j\sigma\mu}(y,D_y)U\equiv \sum\limits_{k,s}
       (B_{j\sigma\mu ks}(y, D_y)U_k)({\mathcal G}_{j\sigma ks}y)
    =\Psi_{j\sigma\mu}(y) \quad (y\in\gamma_{j\sigma}^\varepsilon).\label{eqBinK}
\end{gather}
Here (and below unless otherwise stated) $j, k=1, \dots, N;$ $\sigma=1, 2;$ $\mu=1,
\dots, m;$ $s=0, \dots, S_{j\sigma k}$; ${\mathbf P}_j(y, D_y)$ and $B_{j\sigma\mu ks}(y,
D_y)$ are differential operators of order $2m$ and $m_{j\sigma\mu}$ ($m_{j\sigma\mu}\le
2m-1$), respectively, with $C^\infty$ complex-valued coefficients; ${\mathcal G}_{j\sigma
ks}$ is the operator of rotation by an angle~$\omega_{j\sigma ks}$ and   homothety with a
coefficient~$\chi_{j\sigma ks}$ ($\chi_{j\sigma ks}>0$) in the $y$-plane. Moreover,
$$
|(-1)^\sigma b_{j}+\omega_{j\sigma ks}|<b_{k}\qquad\text{for}\qquad (k,s)\ne(j,0)
$$
(cf. Remark~\ref{remK1}) and
$$
\omega_{j\sigma j0}=0,\qquad \chi_{j\sigma j0}=1
$$
(i.e., ${\mathcal G}_{j\sigma j0}y\equiv y$).

Along with the operators ${\bf P}_{j}(y, D_y)$ and $\mathbf B_{j\sigma\mu}(y,D_y)$, we
consider the operators
\begin{equation}\label{eqCalPCalB}
  {\cP}_{j}(D_y),\qquad\cB_{j\sigma\mu}(D_y)U\equiv \sum\limits_{k,s}
       (B_{j\sigma\mu ks}(D_y)U_k)({\mathcal G}_{j\sigma ks}y),
\end{equation}
where ${\cP}_{j}(D_y)$ and $B_{j\sigma\mu ks}(D_y)$ are the principal homogeneous parts
of the operators $\mathbf P_j(0, D_y)$ and $B_{j\sigma\mu ks}(0, D_y)$, respectively.

We write the operators $\cP_j(D_y)$ and $B_{j\sigma\mu ks}(D_y)$ in the polar
coordinates: $ r^{-2m}\tilde{\mathcal P}_j (\omega, D_\omega, rD_r),$
$r^{-m_{j\sigma\mu}}\tilde B_{j\sigma\mu ks}(\omega, D_\omega, rD_r), $ respectively, and
consider the analytic operator-valued function\footnote{Main definitions and facts
concerning analytic operator-valued functions can be found in~\cite{GS}.}
$$
\tilde{\mathcal L}(\lambda): \prod_{j=1}^{N} W^{2m}(-\omega_j,
 \omega_j)\to \prod_{j=1}^{N}\big(L_2(-\omega_j, \omega_j) \times{\mathbb
 C}^{2m}\big),
$$
$$
 \tilde{\mathcal L}(\lambda)\varphi=\big\{\tilde{\mathcal P}_j(\omega, D_\omega, \lambda)\varphi_j,\
  {\tilde \cB}_{j\sigma\mu}(\omega, D_\omega, \lambda)\varphi\big\},
$$
where $D_\omega=-i\partial/\partial\omega$, $D_r=-i\partial/\partial r$, and
$$
{\tilde \cB}_{j\sigma\mu}(\omega, D_\omega, \lambda)\varphi= \sum\limits_{k,s}
(\chi_{j\sigma ks})^{i\lambda-m_{j\sigma\mu}}
 {\tilde B}_{j\sigma\mu ks}(\omega, D_\omega, \lambda)
              \varphi_k(\omega+\omega_{j\sigma ks})|_{\omega=(-1)^\sigma
              \omega_j}.
$$

Spectral properties of the operator $\tilde{\mathcal L}(\lambda)$ play a crucial role in
the study of smoothness of generalized solutions.  The following assertion is of
particular importance (see Lemmas~2.1 and 2.2 in \cite{SkDu90}).
\begin{lemma}\label{lSpectrum}
For any $\lambda\in\bbC$, the operator $\tilde{\mathcal L}(\lambda)$ has the Fredholm
property and $\ind \tilde{\mathcal L}(\lambda)=0$.

The spectrum of the operator $\tilde{\mathcal L}(\lambda)$ is discrete. For any numbers
$c_1<c_2$, the band $c_1<\Im\lambda<c_2$ contains at most finitely many eigenvalues of
the operator $\tilde{\mathcal L}(\lambda)$.
\end{lemma}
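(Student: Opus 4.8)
The plan is to decompose $\tilde\cL(\lambda)=\tilde\cL^0(\lambda)+\tilde\cK(\lambda)$, where $\tilde\cL^0(\lambda)$ is the \emph{local} operator obtained by retaining in each boundary operator $\tilde\cB_{j\sigma\mu}$ only the diagonal summand $(k,s)=(j,0)$ (for which $\omega_{j\sigma j0}=0$ and $\chi_{j\sigma j0}=1$), so that $\tilde\cL^0(\lambda)$ decouples into $N$ independent ordinary differential boundary-value problems
$$
\varphi_j\mapsto\big\{\tilde\cP_j(\omega,D_\omega,\lambda)\varphi_j,\ \tilde B_{j\sigma\mu j0}(\omega,D_\omega,\lambda)\varphi_j\big|_{\omega=(-1)^\sigma\omega_j}\big\}\quad(\sigma=1,2;\ \mu=1,\dots,m),
$$
while $\tilde\cK(\lambda)$ collects the genuinely nonlocal terms with $(k,s)\ne(j,0)$. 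I would first show that $\tilde\cL^0(\lambda)$ is Fredholm of index $0$ and that $\tilde\cK(\lambda)$ is compact; the first assertion of the lemma then follows from the stability of the Fredholm index under compact perturbations.

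Fix $j$ and regard $\tilde\cP_j(\omega,D_\omega,\lambda)\colon W^{2m}(-\omega_j,\omega_j)\to L_2(-\omega_j,\omega_j)$ as an ordinary differential operator of order $2m$ in $\omega$. Proper ellipticity of $\bP$ forces the coefficient of $D_\omega^{2m}$ (namely the principal symbol of $\bP$ evaluated in the tangential direction) to be nonvanishing, so $\tilde\cP_j$ is surjective with a $2m$-dimensional kernel, i.e.\ Fredholm of index $2m$. Adjoining the $2m$ boundary functionals $\tilde B_{j\sigma\mu j0}\varphi_j|_{\omega=(-1)^\sigma\omega_j}$, a bounded map into $\bbC^{2m}$, lowers the index by $2m$; the Lopatinsky part of Condition~\ref{condElliptic} is exactly what guarantees that the resulting operator is Fredholm, and summation over $j$ gives $\ind\tilde\cL^0(\lambda)=0$. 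For $\tilde\cK(\lambda)$ I observe that each nonlocal summand evaluates $\varphi_k$ and its derivatives of order at most $m_{j\sigma\mu}\le 2m-1$ at the point $\omega=(-1)^\sigma\omega_j+\omega_{j\sigma ks}$, which by the strict inequality $|(-1)^\sigma\omega_j+\omega_{j\sigma ks}|<\omega_k$ lies in the interior of $(-\omega_k,\omega_k)$. Such interior point evaluations factor through the compact embedding $W^{2m}(-\omega_k,\omega_k)\hookrightarrow C^{2m-1}[-\omega_k,\omega_k]$, so $\tilde\cK(\lambda)$ is compact and $\ind\tilde\cL(\lambda)=\ind\tilde\cL^0(\lambda)=0$, which is the first assertion.

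For the second assertion I would apply the analytic Fredholm theorem (see~\cite{GS}). The function $\tilde\cL(\lambda)$ depends analytically on $\lambda$ — polynomially through $\tilde\cP_j$ and $\tilde B_{j\sigma\mu ks}$, and as an entire function through the factors $(\chi_{j\sigma ks})^{i\lambda-m_{j\sigma\mu}}$ — and it is Fredholm of index $0$ by the first part. Hence either $\tilde\cL(\lambda)$ is nowhere invertible, or $\tilde\cL(\lambda)^{-1}$ is a meromorphic operator function whose poles form a set with no finite accumulation point, in which case the spectrum coincides with this discrete set. It therefore suffices to prove that, for every band $c_1\le\Im\lambda\le c_2$, the operator $\tilde\cL(\lambda)$ is invertible for $|\Re\lambda|\ge R$ with some $R=R(c_1,c_2)$: this excludes the nowhere-invertible alternative (so the spectrum is discrete) and, at the same time, confines the eigenvalues lying in $c_1<\Im\lambda<c_2$ to the bounded region $|\Re\lambda|<R$, where discreteness makes their number finite.

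The core of this, and the main obstacle, is the parameter-dependent estimate behind the invertibility just described. As $|\Re\lambda|\to\infty$, proper ellipticity makes $\tilde\cP_j(\omega,D_\omega,\lambda)$ parameter-elliptic and the Lopatinsky condition makes the local problem parameter-elliptic, so $\tilde\cL^0(\lambda)$ is invertible and satisfies a $\lambda$-weighted a priori estimate. In this regime its solutions display boundary-layer behaviour, decaying like $e^{-c|\lambda|\,\dist(\omega,\{-\omega_k,\omega_k\})}$ away from the endpoints. Since every nonlocal term evaluates $\varphi_k$ at an interior point situated at a \emph{positive} distance from $\pm\omega_k$ — precisely what the strict inequality $|(-1)^\sigma\omega_j+\omega_{j\sigma ks}|<\omega_k$ provides — these interior traces are exponentially small in $|\lambda|$. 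This exponential decay dominates both the polynomial growth of $\tilde B_{j\sigma\mu ks}(\omega,D_\omega,\lambda)$ and the factor $|(\chi_{j\sigma ks})^{i\lambda-m_{j\sigma\mu}}|=(\chi_{j\sigma ks})^{-\Im\lambda-m_{j\sigma\mu}}$, which stays bounded because $\Im\lambda$ is bounded on the band. Thus $\tilde\cK(\lambda)$ becomes a small perturbation of the invertible $\tilde\cL^0(\lambda)$ once $|\Re\lambda|$ is large. Making this boundary-layer/interior-trace estimate quantitative and uniform across the band, in the correct $\lambda$-dependent norms on $W^{2m}(-\omega_j,\omega_j)$ and on the trace spaces, is where the real work lies.
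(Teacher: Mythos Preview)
The paper does not prove this lemma at all: it simply states it with the attribution ``see Lemmas~2.1 and~2.2 in~\cite{SkDu90}.'' Your sketch is therefore more than the paper itself provides, and it is in fact the standard route (and essentially the one taken in~\cite{SkDu90}): split off the local diagonal part, show the nonlocal remainder is compact for the Fredholm/index statement, and then use the analytic Fredholm alternative together with large-$|\Re\lambda|$ invertibility in each horizontal band for the discreteness and finiteness statements.

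One small correction of attribution. In the first step you say the Lopatinsky condition ``is exactly what guarantees that the resulting operator is Fredholm.'' It is not needed there: once the leading coefficient of $\tilde\cP_j(\omega,D_\omega,\lambda)$ is nonzero, the map $\varphi_j\mapsto\tilde\cP_j\varphi_j$ from $W^{2m}(-\omega_j,\omega_j)$ to $L_2(-\omega_j,\omega_j)$ is Fredholm of index $2m$, and appending \emph{any} bounded linear map into $\bbC^{2m}$ (a finite-rank, hence compact, perturbation of the map $\varphi_j\mapsto(\tilde\cP_j\varphi_j,0)$) drops the index by $2m$ regardless of Lopatinsky. Likewise your compactness argument for $\tilde\cK(\lambda)$ does not actually require the evaluation points to be interior: in one dimension $W^{2m}\hookrightarrow C^{2m-1}$ is already compact on the closed interval. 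Where both proper ellipticity with Lopatinsky \emph{and} the strict interior location of the nonlocal evaluation points are genuinely used is in your second step --- the former to make $\tilde\cL^0(\lambda)$ parameter-elliptic and invertible with a $\lambda$-weighted estimate as $|\Re\lambda|\to\infty$ in the band, the latter to get the exponential smallness of the interior traces that makes $\tilde\cK(\lambda)$ a relatively small perturbation there. You have correctly identified that last quantitative estimate as the crux, and it is; the argument in~\cite{SkDu90} (building on the Kondrat'ev--Agmon machinery for parameter-elliptic pencils) carries it out in the appropriate $\lambda$-dependent norms.
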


\section{Preservation of Smoothness of Generalized Solutions}\label{sectNoEigen}

\subsection{Formulation of the Main Result}

In this section, we study the case in which the following condition holds.
\begin{condition}\label{condNoEigen1-2m}
The line $\Im\lambda=1-2m$ contains no eigenvalues of the operator $\tilde{\mathcal
L}(\lambda)$.
\end{condition}

Let $\lambda=\lambda_0$ be an eigenvalue of the operator $\tilde{\mathcal L}(\lambda)$.

\begin{definition}[cf.~\cite{KondrTMMO67,GurRJMP03}]\label{defRegEigVal}
We say that $\lambda_0$ is a {\em proper eigenvalue} if none of the corresponding
eigenvectors $\varphi(\omega)=(\varphi_{1}(\omega),\dots, \varphi_{N}(\omega))$ has an
associated vector, while the functions $r^{i\lambda_0}\varphi_{j}(\omega)$, $j=1, \dots,
N$, are homogeneous polynomials in $y_1, y_2$ (of degree $i\lambda_0\in\bbN\cup\{0\}$).
An eigenvalue which is not proper is said to be {\em improper}.
\end{definition}

 Let $\Lambda$ be the set of all eigenvalues
of $\tilde{\mathcal L}(\lambda)$ in the band $1-2m<\Im\lambda<1-\ell$ (this set can be
empty). We also denote $i\Lambda=\{i\lambda: \lambda\in\Lambda\}$.

\begin{condition}\label{condNoEigenProperLambda}
All the eigenvalues from the set $\Lambda$ are proper.
\end{condition}

In particular, Condition~\ref{condNoEigenProperLambda} implies that $\Lambda=\varnothing$
if $\ell=2m-1$ (e.g., if $\ell=m=1$, cf.~\cite{GurAdvDiffEq}) and
$i\Lambda\subset\{\ell,\dots,2m-2\}$ if $\ell\le 2m-2$.

In the case where $\ell\le 2m-2$, we will need some additional conditions.

Let $W^{-2m}(-\omega_j,\omega_j)$ be the space adjoint to $ W^{2m}(-\omega_j,\omega_j)$.
Consider the operator $
 (\tilde{\cal L}(\lambda))^*: \prod_{j=1}^{N}\big(L_2(-\omega_j, \omega_j) \times{\mathbb
 C}^{2m}\big)\to \prod_{j=1}^{N} W^{-2m}(-\omega_j,
 \omega_j)$  which is adjoint to the operator $\tilde{\cal L}(\lambda)$.

For any $s\in\{\ell,\dots,2m-2\}$, we denote by $J_s$ the set of all indices
$(j',\sigma',\mu')$ such that
\begin{equation}\label{eqSlessM}
s\le m_{j'\sigma' \mu'}-1.
\end{equation}
We also denote by $C_s$ the space of numerical vectors $\{c_{j\sigma\mu}\}$ with complex
entries such that
$$
c_{j'\sigma'\mu'}=0,\quad (j',\sigma',\mu')\in J_s.
$$

\begin{condition}\label{condEigenMonom}
If $\ell\le 2m-2$, then the following assertions hold for any $s\in i\Lambda${\rm :}
\begin{enumerate}
\item $J_s\ne\varnothing$.
\item $\langle\{0,c_{j\sigma\mu}\},\psi\rangle=0$ for all $\{c_{j\sigma\mu}\}\in
C_s$ and $\psi\in\ker(\tilde{\mathcal L}(-is))^*$.
\item Let $\varphi_c\in \prod\limits_j W^{2m}(-\omega_j,\omega_j)$ denote a
solution of the equation $\tilde{\mathcal L}(-is)\varphi_c=\{0,c_{j\sigma\mu}\}$, where
$\{c_{j\sigma\mu}\}\in C_s$ {\rm (}this solution exists due to item~{\rm 2} and is
defined up to an arbitrary element $\varphi_0\in\ker\tilde{\mathcal L}(-is)${\rm )}. Then
$r^s\varphi_c(\omega)$ is a homogeneous polynomial {\rm (}of degree $s${\rm )} for any
$\{c_{j\sigma\mu}\}\in C_s$.
\end{enumerate}
\end{condition}

\begin{remark}\label{remCondEigenMonom}
\begin{enumerate}
\item
Part~1 in Condition~\ref{condEigenMonom} is necessary for the fulfillment of part~2. This
follows from Lemma~\ref{lSpectrum}.
\item
Part~2 is necessary and sufficient for the existence of  solutions $\varphi_c$ for all
$\{c_{j\sigma\mu}\}$ in part~3.
\end{enumerate}
\end{remark}

\begin{condition}\label{condNoEigenMonom}
If $\ell\le 2m-2$, then the following assertion holds for any
$s\in\{\ell,\dots,2m-2\}\setminus i\Lambda$.    Let $\varphi_c\in \prod_j
W^{2m}(-\omega_j,\omega_j)$ denote a solution\footnote{This solution exists and is unique
because $-is$ is not an eigenvalue of $\tilde{\mathcal L}(\lambda)$.} of the equation
$\tilde{\mathcal L}(-is)\varphi_c=\{0,c_{j\sigma\mu}\}$, where $\{c_{j\sigma\mu}\}\in
C_s$.  Then $r^s\varphi_c(\omega)$ is a homogeneous polynomial {\rm (}of degree $s${\rm
)} for any $\{c_{j\sigma\mu}\}\in C_s$.
\end{condition}

\begin{remark}\label{remMonomialSolution}
Suppose that Condition~\ref{condNoEigenProperLambda} is fulfilled.
\begin{enumerate}
\item
If Conditions~\ref{condEigenMonom} and~\ref{condNoEigenMonom} hold, then the problem
\begin{equation}\label{eqUinW^2_4'}
\cP_j(D_y)V=0,\qquad \cB_{j\sigma\mu}(D_y)V=c_{j\sigma\mu}r^{s-m_{j\sigma\mu}}
\end{equation}
admits a solution  $V(y)$ which is a homogeneous polynomial of degree $s$, provided that
$\{c_{j\sigma\mu}\}\in C_s$, where $s=\ell,\dots,2m-2$. Indeed, substituting a function
$V=r^s\varphi_c(\omega)$ into~\eqref{eqUinW^2_4'}, we obtain the equation
$\tilde{\mathcal L}(-is)\varphi_s=\{0,c_{j\sigma\mu}\}.$ Due to
Conditions~\ref{condEigenMonom} and~\ref{condNoEigenMonom}, this equation admits a
solution $\varphi_c$ such that the function $V=r^s\varphi_c(\omega)$ is a homogeneous
polynomial of degree $s$.
\item
If Condition~\ref{condEigenMonom} or~\ref{condNoEigenMonom} fails, then there is a vector
$\{c_{j\sigma\mu}\}\in C_s$ such that problem~\eqref{eqUinW^2_4'} admits a solution
\begin{equation}\label{eqUinW^2_4''}
V=r^s\varphi_c(\omega)+r^s(i\ln r)\sum\limits_{n=1}^J c_n\varphi^{(n)}(\omega),
\end{equation}
where $s\in\{\ell,\dots,2m-2\}$, $c_n\in\bbC$, $\varphi_c,\varphi^{(n)}\in
\prod_{j=1}^{N} W^{2m}(-\omega_j,
 \omega_j)$, and $J=J(s)$. Moreover, the function $V$ is not a polynomial in
 $y_1,y_2$.

Indeed, if Condition~\ref{condNoEigenMonom} fails, then the assertion is evident (with
$c_1=\dots=c_J=0$). Assume that Condition~\ref{condEigenMonom} fails. If parts 1 and 2 of
Condition~\ref{condNoEigenMonom} hold while part 3 fails, then the assertion is evident
again (with $c_1=\dots=c_J=0$). Let part 1 or 2 fail. In both cases, part 2 does not hold
(see Remark~\ref{remCondEigenMonom}). This means that there exists a proper eigenvalue
$\lambda_s=-is\in\Lambda$ and a numerical vector $\{c_{j\sigma\mu}\}\in C_s$ such that
$\{0,c_{j\sigma\mu}\}$ is not orthogonal to $\ker(\tilde{\mathcal L}(\lambda_s))^*$.

Let $\varphi^{(1)},\dots,\varphi^{(J)}$ ($J\ge1$) denote some basis in $\ker
\tilde{\mathcal L}(\lambda_s)$. Since $\lambda_s$ is a proper eigenvalue, none of the
eigenvectors $\varphi^{(n)}$ has an associate vector. We substitute a function $V$ given
by~\eqref{eqUinW^2_4''} in Eqs.~\eqref{eqUinW^2_4'}. Then we obtain
\begin{equation}\label{eqUinW^2_4'''}
\tilde \cL(\lambda_s)\varphi_c=\{0,c_{j\sigma\mu}\}-\sum\limits_{n=1}^J c_n\dfrac{d\tilde
\cL(\lambda)}{d\lambda}\bigg|_{\lambda=\lambda_s}\varphi^{(n)}.
\end{equation}
Note that $\dim\ker(\tilde{\mathcal L}(\lambda_s))^*=\dim\ker\tilde{\mathcal
L}(\lambda_s)=J$ due to Lemma~\ref{lSpectrum}. Let $\psi^{(1)},\dots,\psi^{(J)}$ denote a
basis in $\ker(\tilde{\mathcal L}(\lambda_s))^*$. By Lemma~3.2 in~\cite{GurPetr03}, the
matrix
$$
\left\|\left\langle\dfrac{d\tilde
\cL(\lambda)}{d\lambda}\bigg|_{\lambda=\lambda_s}\varphi^{(n)},\,\psi^{(k)}\right\rangle\right\|_{n,k=1,\dots,J}
$$
is nondegenerate. Therefore, we can choose the constants $c_n$ in such a way that the
right-hand side in~\eqref{eqUinW^2_4'''} is orthogonal to $\ker(\tilde{\mathcal
L}(\lambda_s))^*$; hence, there is a solution $\varphi_c$ for Eq.~\eqref{eqUinW^2_4'''}.
Moreover, since $\{0,c_{j\sigma\mu}\}$ is not orthogonal to $\ker(\tilde{\mathcal
L}(\lambda_s))^*$, it follows that the vector $(c_1,\dots,c_J)$ is nontrivial. Thus, the
function $V$ given by~\eqref{eqUinW^2_4''} is not a polynomial in $y_1,y_2$.
\end{enumerate}
\end{remark}

The main result of this section is as follows.

\begin{theorem}\label{thuinW^2NoEigen}
Let Conditions~$\ref{condNoEigen1-2m}$--$\ref{condNoEigenMonom}$ hold and $u$ be a
generalized solution of problem~\eqref{eqPinG}, \eqref{eqBinG} with right-hand side
$\{f_0,f_{i\mu}\}\in L_2(G)\times \mathcal W^{\bm}(\partial G)$. Then $u\in W^{2m}(G)$.
\end{theorem}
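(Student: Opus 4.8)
The plan is to reduce the global regularity claim to a local statement near the orbit $\cK$ and then to invoke the theory of model nonlocal problems in plane angles, reading off the asymptotics of $u$ at each conjugation point from the eigenvalues of $\tilde\cL(\lambda)$. Since $u\in W^{2m}(G\setminus\overline{\cO_\delta(\cK)})$ for every $\delta>0$ by~\eqref{eqSmoothOutsideK}, it suffices to prove that $u\in W^{2m}$ in a neighborhood of each point $g_j\in\cK$. Fixing such a $g_j$, I multiply $u$ by the cutoff $\zeta$ and pass, via the change of variables $Y_j$ and Condition~\ref{condK1}, to the coupled model problem~\eqref{eqPinK}, \eqref{eqBinK} for the vector $U=\{U_j\}$ in the angles $K_j^\varepsilon$; the commutators produced by $\zeta$ are supported away from the vertices and hence contribute right-hand sides of class $W^{2m}$, while the differences between the operators $\bP_j,B_{j\sigma\mu ks}(y,D_y)$ and their frozen principal parts $\cP_j,B_{j\sigma\mu ks}(D_y)$ are lower-order perturbations that land in better weighted spaces. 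The crucial structural input is that $\bB_{i\mu}^2u\in W^{\mimu}(\Gamma_i)$ by~\eqref{eqSeparK23'} (as noted after Definition~\ref{defGenSol2}), so the model data $\Psi_{j\sigma\mu}$ inherit the smoothness of $W^{\mjsigma}$ near the vertex.

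Next I establish an initial weighted a~priori membership. Because $u\in W^\ell(G)$ one has $U\in H_a^0$ near $\cK$ for $a\ge0$, while $F_j\in L_2$ and $\Psi_{j\sigma\mu}\in H_a^{\mjsigma}$ for $a$ large; choosing $a=a_1$ so that the line $\Im\lambda=a_1+1-2m$ lies just above $1-\ell$ and carries no eigenvalue of $\tilde\cL(\lambda)$ (possible by Lemma~\ref{lSpectrum}), the isomorphism theorem for the model problem in weighted spaces yields $U\in\prod_j H_{a_1}^{2m}(K_j)$. I then lower the weight down to the value corresponding to the line $\Im\lambda=1-2m$. By Condition~\ref{condNoEigen1-2m} this terminal line is eigenvalue-free, and the only eigenvalues crossed in the band $1-2m<\Im\lambda<1-\ell$ are those of $\Lambda$. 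The asymptotic formula for analytic operator pencils (Kondrat'ev's theorem in its nonlocal form) then gives a decomposition
\[
 U=\sum_{\lambda_q\in\Lambda}U^{(q)}+\tilde U,\qquad \tilde U\in\prod_j H_0^{2m}(K_j),
\]
where each $U^{(q)}$ is the asymptotic term generated by $\lambda_q$ (built from the eigenvectors, associated vectors, and the resonant part of the right-hand side), and each $H_0^{2m}(K_j)\hookrightarrow W^{2m}(K_j^\varepsilon)$ near the vertex.

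It remains to show that every $U^{(q)}$ is smooth, and this is precisely what Conditions~\ref{condNoEigenProperLambda}--\ref{condNoEigenMonom} secure. Since each $\lambda_q\in\Lambda$ is proper (Condition~\ref{condNoEigenProperLambda}, Definition~\ref{defRegEigVal}), its eigenvectors carry no associated vectors, so no $\ln r$ factors enter the homogeneous part of $U^{(q)}$, and the homogeneous solutions $r^{i\lambda_q}\varphi_j(\omega)$ are homogeneous polynomials in $y_1,y_2$. The inhomogeneous (resonant) part of $U^{(q)}$ at homogeneity degree $s=i\lambda_q\in\{\ell,\dots,2m-2\}$ solves a problem of the form~\eqref{eqUinW^2_4'} with data $\{c_{j\sigma\mu}\}\in C_s$ arising from the Taylor coefficients of $F_j$ and $\Psi_{j\sigma\mu}$ at the vertex; by item~3 of Condition~\ref{condEigenMonom} (for $s\in i\Lambda$) and Condition~\ref{condNoEigenMonom} (for $s\notin i\Lambda$), together with item~2 guaranteeing solvability, the corresponding solution $r^s\varphi_c(\omega)$ is again a homogeneous polynomial, as made explicit in Remark~\ref{remMonomialSolution}. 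Hence each $U^{(q)}$ is a polynomial, so $U^{(q)}\in W^{2m}$, and therefore $U\in W^{2m}$ near the vertices; transferring back by $Y_j^{-1}$ and combining with~\eqref{eqSmoothOutsideK} yields $u\in W^{2m}(G)$. The main obstacle I expect is the bookkeeping of this weight-lowering across several eigenvalues — in particular verifying that the right-hand side generated at each resonant level $s$ really does lie in $C_s$ (the vanishing $c_{j'\sigma'\mu'}=0$ for $(j',\sigma',\mu')\in J_s$ reflecting the order condition~\eqref{eqSlessM}) and that the proper and monomial conditions exactly rule out the non-polynomial terms~\eqref{eqUinW^2_4''}; by contrast, the localization, the Sobolev-to-weighted embeddings, and the isomorphism step are routine.
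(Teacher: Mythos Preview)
Your overall strategy---localize to the model problem near $\cK$, exploit the nonlocal Kondrat'ev theory in plane angles, and peel off polynomial asymptotic terms using Conditions~\ref{condNoEigenProperLambda}--\ref{condNoEigenMonom}---matches the paper's, but two steps are not justified and in fact fail as stated.

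First, the claim that ``the isomorphism theorem yields $U\in\prod_j H_{a_1}^{2m}(K_j)$'' with $a_1$ just above $2m-\ell$ is not correct. From $u\in W^\ell(G)$ you only obtain $U\in \cH_0^0$ near the vertex, hence (by Lemma~\ref{lAppL2.3GurMatZam05}) only $U\in\cH_{2m}^{2m}$; passing from weight $2m$ to weight $a_1$ already crosses the band $1-\ell\le\Im\lambda<1$, about which the hypotheses say nothing. The paper handles this in Lemma~\ref{lU=C+m2m}: one constructs (via Lemma~\ref{lAppL4.3GurPetr03}) particular solutions absorbing the Taylor jets of $\Psi_{j\sigma\mu}$ up to order $\ell-1$, applies Lemma~\ref{lAppTh2.2GurPetr03}, and then uses the assumption $U\in\cW^\ell$ together with Lemma~\ref{lAppL4.20Kondr} to conclude that \emph{all} terms of homogeneity $\le\ell-1$ collapse to a polynomial $Q$ of degree $\ell-1$. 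Only after this does one have $\hat U=U-Q\in\cH_{2m-\ell}^{2m}$, and it is precisely this starting weight that forces the data at the first resonant level $s=\ell$ to lie in $C_\ell$ (the point you flag as ``the main obstacle'').

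Second, and more seriously, you assert $\tilde U\in \prod_j H_0^{2m}(K_j)$ and then embed into $W^{2m}$. But subtracting the Taylor polynomial of $\Psi_{j\sigma\mu}$ only places the boundary data in $\cH_\delta^{\bm}(\gamma^\varepsilon)$ for every $\delta>0$ (Lemma~4.11 in~\cite{KondrTMMO67}), \emph{not} in $\cH_0^{\bm}$. Consequently Lemma~\ref{lAppTh2.2GurPetr03} delivers only $\tilde U\in\cH_\delta^{2m}$, and $\cH_\delta^{2m}\not\subset\cW^{2m}$ for $\delta>0$. The paper closes this gap in Lemma~\ref{lUinW^2}: one invokes Lemma~\ref{lAppL2.4GurRJMP03}---and this is exactly where Condition~\ref{condNoEigen1-2m} is genuinely used---to build a corrector $V\in\cH_\delta^{2m}(K)\cap\cW^{2m}(K)$ such that the boundary data for $U'-V$ land in $\cH_0^{\bm}$; only then does one obtain $U'-V\in\cH_0^{2m}\subset\cW^{2m}$. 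In your sketch Condition~\ref{condNoEigen1-2m} is invoked merely to say the terminal line is eigenvalue-free for the asymptotic expansion, but its real role is this construction of $V$, without which the final embedding fails.
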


\subsection{Proof of the Main Result}\label{subsecProofMainResult}

Let $U_j(y')=u_j(y(y'))$, $j=1,\dots,N$, be the functions corresponding to the set
(orbit) $\mathcal K$ and satisfying problem~\eqref{eqPinK}, \eqref{eqBinK} with
right-hand side $\{F_j, \Psi_{j\sigma\mu}\}$ (see Sec.~\ref{subsectStatementNearK}).

Set
\begin{equation}\label{eqd1d2}
D_\chi=2\max\{\chi_{j\sigma ks}\},\qquad d_\chi=\min\{\chi_{j\sigma ks}\}/2.
\end{equation}
Let $\varepsilon>0$ be so small that $D_\chi \varepsilon<\varepsilon_1$ (where
$\varepsilon$ and $\varepsilon_1$ are defined in Sec.~\ref{subsectStatement}).

Introduce the spaces of vector-valued functions
\begin{equation}\label{eqSpacesCal1}
\mathcal W^k(K^\varepsilon)=\prod\limits_j W^k(K_j^\varepsilon),\quad \mathcal \mathcal
\mathcal H_a^k(K^\varepsilon)=\prod\limits_j H_a^k(K_j^\varepsilon), \quad k\ge0;
\end{equation}
\begin{equation}\label{eqSpacesCal2}
\begin{aligned}
\mathcal W^{\bm}(\gamma^\varepsilon)&=
\prod\limits_{j,\sigma}W^{\mjsigma}(\gamma_{j\sigma}^\varepsilon),\\
\mathcal H_a^{\bm}(\gamma^\varepsilon)&=
\prod\limits_{j,\sigma}H_a^{\mjsigma}(\gamma_{j\sigma}^\varepsilon).
\end{aligned}
\end{equation}
Similarly, one can introduce the spaces $\mathcal W^k(K)$, $\mathcal H_a^k(K)$, $\mathcal
W^{\bm}(\gamma)$, and $\mathcal H_a^{\bm}(\gamma)$.

Since any generalized solution
 $
u\in W^{2m}\bigl(G\setminus\overline{\mathcal O_\delta(\mathcal K)}\bigr)$ for  any
$\delta>0$ by definition, it follows that
\begin{equation}\label{eqU_jW2loc}
U_j\in W^{2m}(K_{j}^{\varepsilon_1}\setminus\overline{\cO_\delta(0)})\qquad \forall
\delta>0.
\end{equation}

It follows from the belonging $U\in \mathcal W^\ell(K^{\varepsilon_1})$ that
\begin{equation}\label{eqUa-10}
U\in   \mathcal H_{0}^0(K^{\varepsilon_1}).
\end{equation}
Further, we have (see~\eqref{eqPinK}, \eqref{eqBinK}) $\{F_j\}\in\mathcal
W^0(K^{\varepsilon})$ and, by the belonging $f_{i\mu}\in W^{\mimu}(\Gamma_i)$, by
relation~\eqref{eqSmoothOutsideK}, and by estimate~\eqref{eqSeparK23'}, we have
$\{\Psi_{j\sigma\mu}\}\in \mathcal W^{\bm}(\gamma^\varepsilon)$. Therefore,
\begin{equation}\label{eqf1+a3/2}
\{F_j\}\in \mathcal H_{2m}^{0}(K^{\varepsilon}),\quad \{\Psi_{j\sigma\mu}\}\in \mathcal
H_{2m}^{\bm}(\gamma^\varepsilon).
\end{equation}

It follows from relations~\eqref{eqU_jW2loc}--\eqref{eqf1+a3/2} and from
Lemma~\ref{lAppL2.3GurMatZam05} that
\begin{equation}\label{eqU_jH1+a}
U\in\mathcal  H_{2m}^{2m}(K^{\varepsilon_1}).
\end{equation}
To prove Theorem~\ref{thuinW^2NoEigen}, it suffices to show that $U\in \mathcal
W^{2m}(K^{\varepsilon})$.

\begin{lemma}\label{lU=C+m2m}
Let $U\in\mathcal W^\ell(K^\varepsilon)$, $U_j$ satisfy relations~\eqref{eqU_jW2loc}, and
$U$  be a solution\footnote{Since $U\in\cH_{2m}^{2m}(K^{\varepsilon_1})$ due
to~\eqref{eqU_jH1+a} and
$\{F_j,\Psi_{j\sigma\mu}\}\in\cH_{2m}^0(K^\varepsilon)\times\cH_{2m}^\bm(\gamma^\varepsilon)$,
 relations~\eqref{eqPinK}, \eqref{eqBinK} can be understood as
equalities in the corresponding weighted spaces.} of problem~\eqref{eqPinK},
\eqref{eqBinK} with right-hand side $\{F_j, \Psi_{j\sigma\mu}\}\in \mathcal
W^0(K^{\varepsilon})\times\mathcal W^{\bm}(\gamma^\varepsilon)$. Then
\begin{equation}\label{eqU=C+}
 U=Q+\hat U,
\end{equation}
where $\hat U\in\mathcal  H_{2m-\ell}^{2m}(K^\varepsilon)$  and $Q=(Q_1, \dots, Q_N)$ is
a polynomial vector of degree\footnote{Saying ``a polynomial of degree $s$,'' we always
mean ``a polynomial of degree no greater than~$s$.'' We mean that the polynomial equals
zero if $s<0$. } $\ell-1$.
\end{lemma}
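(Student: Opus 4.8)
The plan is to lower the weight exponent of $U$ from $a=2m$ down to $a=2m-\ell$ by means of the Kondrat'ev-type asymptotic expansion theory for the model nonlocal problem \eqref{eqPinK}, \eqref{eqBinK} (as developed in \cite{SkMs86,SkDu90,GurAdvDiffEq}, following \cite{KondrTMMO67}), and then to use the membership $U\in\cW^\ell(K^\varepsilon)$ to discard every non-polynomial term that could appear in between. The key bookkeeping is the weight--strip correspondence for the pencil $\tilde\cL(\lambda)$: a homogeneous function $r^{i\lambda}\varphi(\omega)$ lies, near the vertex, in $H_a^{2m}$ exactly when $\Im\lambda<a-2m+1$, so that the weight $a=2m$ corresponds to the line $\Im\lambda=1$ and $a=2m-\ell$ to the line $\Im\lambda=1-\ell$; moreover such a term belongs to $W^\ell$ iff $\Im\lambda<1-\ell$, the only exception being homogeneous polynomials, which lie in every Sobolev space.

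First I would record the input. By \eqref{eqU_jH1+a} we already have $U\in\cH_{2m}^{2m}(K^{\varepsilon_1})$, and by \eqref{eqf1+a3/2} the right-hand sides satisfy $\{F_j\}\in\cH_{2m}^0(K^\varepsilon)$ and $\{\Psi_{j\sigma\mu}\}\in\cH_{2m}^{\bm}(\gamma^\varepsilon)$. To run the expansion theorem with target weight $a=2m-\ell$ I must place the data in $\cH_{2m-\ell}^0\times\cH_{2m-\ell}^{\bm}$. The volume part is immediate, since $\cW^0=L_2$ embeds into $\cH_{2m-\ell}^0$ (the weight $\rho^{2(2m-\ell)}$ is bounded because $2m-\ell>0$). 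For the boundary part the embedding $W^{\mjsigma}\subset H_{2m-\ell}^{\mjsigma}$ holds only when $m_{j\sigma\mu}\ge\ell$; for the remaining indices I would split off the Taylor polynomial in $r$ of $\Psi_{j\sigma\mu}$ of orders $0,\dots,\ell-m_{j\sigma\mu}-1$, after which the remainder lies in $H_{2m-\ell}^{\mjsigma}$. The subtracted polynomial boundary data (of orders $s=m_{j\sigma\mu},\dots,\ell-1$) together with the eigenvalues of $\tilde\cL(\lambda)$ in the strip $1-\ell\le\Im\lambda<1$ — finitely many by Lemma~\ref{lSpectrum} — feed into the finitely many power–logarithmic terms delivered by the theorem. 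This produces a representation $U=\sum_p W_p+\hat U$ with $\hat U\in\cH_{2m-\ell}^{2m}(K^\varepsilon)$ and $W_p=r^{i\lambda_p}\sum_q(\ln r)^q\varphi_{pq}(\omega)$, where the sum runs over the eigenvalues $\lambda_p$ with $1-\ell\le\Im\lambda_p<1$. Taking the bottom endpoint inside the sum pushes $\hat U$ strictly below the line $\Im\lambda=1-\ell$, so no eigenvalue condition on that line is needed.

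It then remains to show that $\sum_p W_p$ is a polynomial of degree $\le\ell-1$. Here I would use the elementary weight comparison $\cH_{2m-\ell}^{2m}(K^\varepsilon)\subset\cW^\ell(K^\varepsilon)$: for $|\alpha|\le\ell$ the weight $\rho^{2(|\alpha|-\ell)}$ in the $\cH_{2m-\ell}^{2m}$-norm is $\ge1$ near the vertex, whence the $W^\ell$-norm is dominated by the $\cH_{2m-\ell}^{2m}$-norm. Consequently $\sum_p W_p=U-\hat U\in\cW^\ell(K^\varepsilon)$. Since the summands have pairwise distinct leading orders $r^{-\Im\lambda_p}(\ln r)^q$ as $r\to0$, no cancellation among them is possible, so each $W_p$ must itself lie in $\cW^\ell$. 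By the correspondence above, a term with $\Im\lambda_p\ge1-\ell$ lies in $\cW^\ell$ only if it carries no logarithm, $\lambda_p=-is$ with $s=i\lambda_p\in\{0,\dots,\ell-1\}$, and $r^{i\lambda_p}\varphi_{p0}(\omega)$ is a homogeneous polynomial of degree $s$. Hence every non-polynomial $W_p$ vanishes, $Q:=\sum_p W_p$ is a polynomial vector of degree $\le\ell-1$, and $U=Q+\hat U$ as claimed.

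I expect the main obstacle to be Step two, namely the clean application of the expansion theorem when some $m_{j\sigma\mu}<\ell$: one must carry out the Taylor splitting of the boundary data, track exactly which polynomial orders $s$ feed into which eigenvalue contributions, and verify that after the subtraction the remainder genuinely lands in the target weighted spaces so that the theorem applies on the closed–open strip $1-\ell\le\Im\lambda<1$. The remaining steps — the weight comparison $\cH_{2m-\ell}^{2m}\subset\cW^\ell$ and the ``distinct orders, hence no cancellation'' argument eliminating the non-polynomial terms — are routine once the expansion is in hand.
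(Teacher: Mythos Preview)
Your approach is essentially the paper's: split off the Taylor polynomials of the boundary data (Kondrat'ev, Lemma~4.11), build a particular power--log solution $W^1$ carrying them (Lemma~\ref{lAppL4.3GurPetr03}), apply the asymptotic expansion theorem (Lemma~\ref{lAppTh2.2GurPetr03}) to $U-W^1$ to produce $W^2$ and the remainder $\hat U$, and then invoke Lemma~\ref{lAppL4.20Kondr} together with $U-\hat U\in\cW^\ell$ to force $W^1+W^2$ to be a polynomial of degree $\le\ell-1$. The one place your write-up is imprecise is the handling of a possible eigenvalue on the line $\Im\lambda=1-\ell$: Lemma~\ref{lAppTh2.2GurPetr03} requires the target line to be eigenvalue-free, so the paper shifts to weight $2m-\ell-\delta$, with $\delta\in(0,1)$ chosen via Lemma~\ref{lSpectrum} so that the band $1-\ell-\delta\le\Im\lambda<1-\ell$ is empty, obtains $\hat U\in\cH_{2m-\ell-\delta}^{2m}(K^\varepsilon)\subset\cH_{2m-\ell}^{2m}(K^\varepsilon)$, and thereby collects any eigenvalue on $\Im\lambda=1-\ell$ among the power--log terms --- this $\delta$-shift is the precise mechanism behind your phrase ``taking the bottom endpoint inside the sum''.
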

\begin{proof} 1. Due to~\eqref{eqU_jH1+a}, it suffices to consider the case $\ell\ge1$. Let
$\delta$ be an arbitrary number such that $0<\delta<1$. By Lemma~4.11
in~\cite{KondrTMMO67}, for each function $\Psi_{j\sigma\mu}\in\mathcal
W^{\mjsigma}(\gamma_{j\sigma}^\varepsilon)$, there is a polynomial $P_{j\sigma\mu}(r)$ of
degree $2m-m_{j\sigma\mu}-2$  such that
$$
\{\Psi_{j\sigma\mu}-P_{j\sigma\mu}\}\in \cH_{2m-\ell-\delta}^{\bm}(\gamma^\varepsilon).
$$
  Using
Lemma~\ref{lAppL4.3GurPetr03}, one can construct a function
\begin{equation}\label{eqW1ExplicitForm}
W^1=\sum\limits_{s=0}^{\ell-1}\sum\limits_{l=0}^{l_1} r^s(i\ln
r)^l\varphi_{sl}^1(\omega)\in \mathcal H_{2m}^{2m}(K^{\varepsilon}),
\end{equation}
where   $\varphi_{sl}^1\in \prod\limits_j W^{2m}(-\omega_j,\omega_j)$, such that
$$
\{\mathbf P_j(y,D_y)W^1_j\}\in \cH_{2m-\ell-\delta}^0(K^\varepsilon),\  \{\mathbf
B_{j\sigma\mu}(y,D_y)W^1-P_{j\sigma\mu}\}\in
\cH_{2m-\ell-\delta}^{\bm}(\gamma^\varepsilon).
$$

Therefore, $ \{\mathbf P_j(y,D_y)(U_j-W^1_j)\}\in \cH_{2m-\ell-\delta}^0(K^\varepsilon),$
$\{\mathbf B_{j\sigma\mu}(y,D_y)(U_j-W^1)\}\in
\cH_{2m-\ell-\delta}^\bm(\gamma^\varepsilon). $

It follows from~\eqref{eqU_jH1+a} and~\eqref{eqW1ExplicitForm} that $U-W^1\in\mathcal
H_{2m}^{2m}(K^{\varepsilon})$. Due to Lemma~\ref{lSpectrum}, we can choose a number
$\delta$, $0<\delta<1$, in such a way that the band $1-\ell-\delta\le\Im\lambda<1-\ell$
has no eigenvalues of $\tilde{\mathcal L}(\lambda)$. Therefore, applying
Lemma~\ref{lAppTh2.2GurPetr03} and Lemma~\ref{lAppL4.3GurPetr03}, we obtain
$$
U-W^1=W^{2}+\hat U,
$$
where
$$
W^{2}=\sum\limits_{n=1}^{n_0}\sum\limits_{l=0}^{l_2} r^{i\mu_n}(i\ln
r)^l\varphi_{nl}^2(\omega),
$$
$\{\mu_1,\dots,\mu_{n_0}\}$ is the set of all eigenvalues lying in the band
$1-\ell\le\Im\lambda<1$ (in fact, we have to consider the eigenvalues in the band
$1-\ell-\delta\le\Im\lambda<1$, but the band $1-\ell-\delta\le\Im\lambda<1-\ell$ has no
eigenvalues by the choice of $\delta$), $\varphi_{nl}^2\in \prod\limits_j
W^{2m}(-\omega_j,\omega_j)$, and $\hat U\in \mathcal
H_{2m-\ell-\delta}^{2m}(K^{\varepsilon})\subset\mathcal
H_{2m-\ell}^{2m}(K^{\varepsilon})$.

Since $s\le \ell-1$ (in the formula for $W^1$), $\Re i\mu_n\le \ell-1$ (in the formula
for $W^{2}$), and $W^1+W^{2}=U-\hat U\in \mathcal W^\ell(K^{\varepsilon})$, it follows
from Lemma~\ref{lAppL4.20Kondr} that $W^1+W^{2}$ is a polynomial vector of degree
$\ell-1$.
\end{proof}

\begin{lemma}\label{lU=C+}
Let the hypotheses of Lemma~$\ref{lU=C+m2m}$ be fulfilled, and let
Conditions~$\ref{condNoEigenProperLambda}$--$\ref{condNoEigenMonom}$ hold. Then
\begin{equation}\label{eqlU=C+0}
 U=W+U'
\end{equation}
 where $W=(W_1, \dots, W_N)$ is a polynomial vector of degree
$2m-2$,  $U'\in\mathcal  H_{\delta}^{2m}(K^\varepsilon)$ {\rm (}$\delta$ is such that
$0<\delta<1$ and the band $1-2m<\Im\lambda\le 1-2m+\delta$ contains no eigenvalues of
$\tilde\cL(\lambda)${\rm )}, and
\begin{equation}\label{eqlU=C+00}
\begin{aligned}
\{\bP_j(y,D_y)U_j'\}&\in\cH_0^0(K^\varepsilon),\\
\{\bB_{j\sigma\mu}(y,D_y)U'\}&\in\cH_\delta^\bm(\gamma^\varepsilon)\cap
\cW^\bm(\gamma^\varepsilon).
\end{aligned}
\end{equation}
\end{lemma}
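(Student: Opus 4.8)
The plan is to iterate the argument of Lemma~\ref{lU=C+m2m}, lowering the weight exponent of the remainder step by step from $2m-\ell$ down to $\delta$ and peeling off, at each integer level $s=\ell,\ell+1,\dots,2m-2$, a homogeneous polynomial of degree~$s$. By Lemma~\ref{lSpectrum} the spectrum of $\tilde\cL(\lambda)$ is discrete, so a number $\delta\in(0,1)$ with the stated property exists. By Condition~\ref{condNoEigenProperLambda} every eigenvalue in the band $1-2m<\Im\lambda<1-\ell$ is proper, hence of the form $\lambda_s=-is$ with $i\lambda_s=s\in\{\ell,\dots,2m-2\}$ a nonnegative integer (being proper forces $r^{i\lambda_s}\varphi_j$ to be a homogeneous polynomial, so $i\lambda_s\in\bbN\cup\{0\}$). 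Together with Condition~\ref{condNoEigen1-2m} and the choice of $\delta$, this means that in lowering the weight I cross only the horizontal lines $\Im\lambda=-s$, $s=\ell,\dots,2m-2$, each carrying at most the single proper eigenvalue $\lambda_s$, and that the band from $\Im\lambda=2-2m$ down to $\Im\lambda=1-2m+\delta$ is eigenvalue-free. I start from the decomposition $U=Q+\hat U$ of Lemma~\ref{lU=C+m2m}, with $Q$ of degree $\ell-1$ and $\hat U\in\cH^{2m}_{2m-\ell}(K^\varepsilon)$.

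At the generic level $s$ I would proceed as follows. No subtraction is needed on the interior side, since the datum $F_j\in\cW^0(K^\varepsilon)=\cH^0_0(K^\varepsilon)$ already has the best weight. On the boundary side, Lemma~4.11 in~\cite{KondrTMMO67} applied to $\Psi_{j\sigma\mu}\in W^{\mjsigma}(\gamma_{j\sigma}^\varepsilon)$ produces, at level $s$, a Taylor monomial $c_{j\sigma\mu}r^{s-m_{j\sigma\mu}}$ modulo higher-order terms. Since only nonnegative powers of $r$ occur in a Taylor expansion, $c_{j\sigma\mu}=0$ whenever $s-m_{j\sigma\mu}<0$, i.e. whenever $(j,\sigma,\mu)\in J_s$; hence $\{c_{j\sigma\mu}\}\in C_s$. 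This is exactly the situation of Remark~\ref{remMonomialSolution}(1): under Conditions~\ref{condNoEigenProperLambda}, \ref{condEigenMonom}, and~\ref{condNoEigenMonom} the model problem $\cP_j V=0$, $\cB_{j\sigma\mu}V=c_{j\sigma\mu}r^{s-m_{j\sigma\mu}}$ admits a solution $V^{(s)}$ that is a homogeneous polynomial of degree~$s$. I would subtract $V^{(s)}$ (together with the lower-order and variable-coefficient corrections coming from $\bP_j-\cP_j$ and $\bB_{j\sigma\mu}-\cB_{j\sigma\mu}$, which feed into subsequent levels) and then apply the asymptotic-expansion results, Lemmas~\ref{lAppTh2.2GurPetr03} and~\ref{lAppL4.3GurPetr03}, to cross the line $\Im\lambda=-s$. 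By properness the kernel of $\tilde\cL(\lambda_s)$ and the particular solution $V^{(s)}$ are homogeneous polynomials of degree~$s$, and no associated vectors (hence no $\ln r$ factors) enter; the full crossing contribution is therefore polynomial and can be absorbed into the polynomial vector, after which the remainder gains one unit of weight.

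Summing the polynomials produced at all levels with $Q$ yields a polynomial vector $W=(W_1,\dots,W_N)$ of degree $2m-2$, and after the final crossing ($s=2m-2$) the eigenvalue-free band down to $\Im\lambda=1-2m+\delta$ lets me place the remainder $U'=U-W$ in $\cH^{2m}_\delta(K^\varepsilon)$. The residual assertions then follow almost formally: $\bP_jU'_j=F_j-\bP_jW_j$ is the sum of $F_j\in\cH^0_0(K^\varepsilon)$ and the smooth function $\bP_jW_j$, hence lies in $\cH^0_0(K^\varepsilon)$; likewise $\bB_{j\sigma\mu}U'=\Psi_{j\sigma\mu}-\bB_{j\sigma\mu}W\in\cW^\bm(\gamma^\varepsilon)$, while its membership in $\cH^\bm_\delta(\gamma^\varepsilon)$ is precisely the statement that $W$ was built to cancel the Taylor part of each $\Psi_{j\sigma\mu}$ up to order $s=2m-2$, beyond which the remainder already belongs to the weight-$\delta$ trace space.

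The main obstacle is guaranteeing that each crossing produces nothing but a homogeneous polynomial: a stray $\ln r$ factor or a non-integer homogeneous term would simultaneously contradict the claim that $W$ is a polynomial and block the weight gain. This is exactly where Conditions~\ref{condNoEigenProperLambda}--\ref{condNoEigenMonom} are indispensable. Properness removes associated vectors and forces the kernel elements of $\tilde\cL(-is)$ to be homogeneous polynomials, while the consistency Conditions~\ref{condEigenMonom} and~\ref{condNoEigenMonom}---through the solvability and polynomiality statement of Remark~\ref{remMonomialSolution}(1), which in turn rests on the nondegeneracy of the matrix $\bigl\|\langle (d\tilde\cL/d\lambda)|_{\lambda=\lambda_s}\varphi^{(n)},\psi^{(k)}\rangle\bigr\|$ (Lemma~3.2 in~\cite{GurPetr03})---ensure that the inhomogeneous datum $c_{j\sigma\mu}r^{s-m_{j\sigma\mu}}$ with $\{c_{j\sigma\mu}\}\in C_s$ is matched by a genuine polynomial rather than by a solution of the logarithmic type~\eqref{eqUinW^2_4''}. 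Should any of these conditions fail, Remark~\ref{remMonomialSolution}(2) exhibits precisely such a non-polynomial solution, which indicates that the conditions are not merely convenient but essential for the decomposition to hold.
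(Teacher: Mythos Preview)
Your proposal is correct and follows essentially the same approach as the paper: start from the decomposition $U=Q+\hat U$ of Lemma~\ref{lU=C+m2m}, peel off homogeneous polynomial correctors of degrees $s=\ell,\dots,2m-2$ by combining Kondrat'ev's Lemma~4.11 with Remark~\ref{remMonomialSolution}(1), and then invoke Lemmas~\ref{lAppTh2.2GurPetr03} and~\ref{lAppL4.3GurPetr03} (with Condition~\ref{condNoEigenProperLambda} guaranteeing that the eigenvalue contributions are polynomial) to push the remainder into $\cH_\delta^{2m}(K^\varepsilon)$. The only differences are organizational---the paper first subtracts all the $W^\ell,\dots,W^{2m-2}$ and then performs a single crossing from weight $2m-\ell$ to $\delta$, while you interleave the subtraction and the crossing level by level---and one minor misattribution: Remark~\ref{remMonomialSolution}(1) rests directly on Conditions~\ref{condEigenMonom} and~\ref{condNoEigenMonom}, not on the matrix nondegeneracy of Lemma~3.2 in~\cite{GurPetr03}, which is used only in part~(2) to exhibit the logarithmic obstruction when those conditions fail.
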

\begin{proof} 1. Consider the function $\hat U$ defined by Lemma~\ref{lU=C+m2m}. The function $\hat
U$ belongs to $\cH_{2m-\ell}^{2m}(K^\varepsilon)$, and, by virtue of
relations~\eqref{eqPinK}, \eqref{eqBinK}, and \eqref{eqU=C+}, it is a solution of the
problem
\begin{equation}\label{eqUinW^2_0}
\begin{aligned}
 {\mathbf P}_{j}(y,D_y)\hat U_j&=F_j-{\mathbf P}_{j}(y,D_y)Q_j
& &(y\in K_j^\varepsilon),\\
 {\mathbf
B}_{j\sigma\mu}(y,D_y)\hat U &=\Psi_{j\sigma\mu}-{\mathbf B}_{j\sigma\mu}(y,D_y)Q & &
(y\in\gamma_{j\sigma}^\varepsilon).
\end{aligned}
\end{equation}
Since $\{F_j\}\in\mathcal W^0(K^\varepsilon)$ and $Q$ is a polynomial vector, it follows
that
\begin{equation}\label{eqUinW^2_1}
\{F_j-{\mathbf P}_{j}(y,D_y)Q_j\}\in\mathcal H_0^0(K^\varepsilon).
\end{equation}
Further, $\Psi_{j\sigma\mu}-{\mathbf B}_{j\sigma\mu}(y,D_y)Q\in
W^\mjsigma(\gamma_j^\varepsilon)$. Hence, by Lemma~4.11 in~\cite{KondrTMMO67}, there
exists a polynomial $P_{j\sigma\mu}(r)$ of degree $2m-m_{j\sigma\mu}-2$   such that
\begin{equation}\label{eqUinW^2_2}
\{\Psi_{j\sigma\mu}-{\mathbf B}_{j\sigma\mu}(y,D_y)Q-P_{j\sigma\mu}\}\in\mathcal
H_\delta^{\bm}(\gamma^\varepsilon)\cap \mathcal W^{\bm}(\gamma^\varepsilon)
\end{equation}
for any $ 0<\delta<1$. Moreover, since
$$
\{\Psi_{j\sigma\mu}-{\mathbf B}_{j\sigma\mu}(y,D_y)Q\}=\{\bB_{j\sigma\mu}(y,D_y)\hat
U\}\in\cH_{2m-\ell}^{\bm}(\gamma^\varepsilon),
$$
we see that each polynomial $P_{j\sigma\mu}(r)$ consists of monomials of degree
$\max(0,\ell-m_{j\sigma\mu}),\dots,2m-m_{j\sigma\mu}-2$ (the polynomial
$P_{j\sigma\mu}(r)$ is absent if $\ell=2m-1$).

2. We write each polynomial $P_{j\sigma\mu}(r)$ as follows:
\begin{equation}\label{eqUinW^2_3}
P_{j\sigma\mu}(r)=c_{j\sigma\mu}r^{\ell-m_{j\sigma\mu}}+c_{j\sigma\mu}'r^{\ell-m_{j\sigma\mu}+1}+\dots,
\end{equation}
where, in particular, $c_{j\sigma\mu}=0$ for all $j,\sigma,\mu$ such that $\ell\le
m_{j\sigma\mu}-1$ (cf.~\eqref{eqSlessM} for $s=\ell$). Therefore, $\{c_{j\sigma\mu}\}\in
C_\ell$.

We consider the auxiliary problem
\begin{equation}\label{eqUinW^2_4}
\cP_j(D_y)W^\ell=0,\qquad
\cB_{j\sigma\mu}(D_y)W^\ell=c_{j\sigma\mu}r^{\ell-m_{j\sigma\mu}},
\end{equation}
where $\cP_j(D_y)$ and $\cB_{j\sigma\mu}(D_y)$ are the same as in~\eqref{eqCalPCalB}. By
virtue of Conditions~\ref{condEigenMonom} and~\ref{condNoEigenMonom} (see
Remark~\ref{remMonomialSolution}), there exists a solution $W^\ell(y)$ of
problem~\eqref{eqUinW^2_4} such that $W^\ell(y)$ is a homogeneous polynomial of degree
$\ell$.

Using~\eqref{eqUinW^2_3} and~\eqref{eqUinW^2_4} and expanding the coefficients of
$\bB_{j\sigma\mu}(y,D_y)$ by the Taylor formula, we obtain
\begin{equation}\label{eqUinW^2_5}
\begin{aligned}
\{\bP_j(y,D_y)W_j^\ell\}&\in\cH_0^0(K^\varepsilon),\\
\{\bB_{j\sigma\mu}(y,D_y)W^\ell-P_{j\sigma\mu}+P'_{j\sigma\mu}\}&\in\cH_\delta^\bm(\gamma^\varepsilon)\cap
\cW^\bm(\gamma^\varepsilon),
\end{aligned}
\end{equation}
where $P'_{j\sigma\mu}(r)$ is a polynomial consisting of monomials of degree
$\max(0,\ell-m_{j\sigma\mu}+1),\dots, 2m-m_{j\sigma\mu}-2$.

It follows from~\eqref{eqUinW^2_1}, \eqref{eqUinW^2_2}, and~\eqref{eqUinW^2_5} that
\begin{equation}\label{eqUinW^2_6}
\begin{aligned}
\{F_j-\bP_j(y,D_y)(Q_j+W_j^\ell)\}&\in\cH_0^0(K^\varepsilon),\\
\{\Psi_{j\sigma\mu}-\bB_{j\sigma\mu}(y,D_y)(Q+W^\ell)-P'_{j\sigma\mu}\}&\in\cH_\delta^\bm(\gamma^\varepsilon)\cap
\cW^\bm(\gamma^\varepsilon).
\end{aligned}
\end{equation}

3. Repeating the procedure described in item~2 finitely many times (and using
Conditions~\ref{condEigenMonom} and~\ref{condNoEigenMonom} each time), we obtain
\begin{equation}\label{eqUinW^2_7}
\begin{aligned}
&\{F_j-\bP_j(y,D_y)(Q_j+W_j^\ell+\dots+W_j^{2m-2})\} \in\cH_0^0(K^\varepsilon),\\
&\{\Psi_{j\sigma\mu}-\bB_{j\sigma\mu}(y,D_y)(Q+W^\ell+\dots W^{2m-2}
)\}\\
&\qquad\qquad\qquad\qquad\in\cH_\delta^\bm(\gamma^\varepsilon)\cap
\cW^\bm(\gamma^\varepsilon),
\end{aligned}
\end{equation}
where $W^s$ is a homogeneous polynomial vector of degree $s$, $s=\ell,\dots,2m-2$ (note
that a homogeneous polynomial vector of degree $2m-1$ already belongs to
$\cH_\delta^{2m}(K^\varepsilon)$). If $\ell=2m-1$, then the polynomials $W^s$
in~\eqref{eqUinW^2_7} are absent; in this case, the second relation in~\eqref{eqUinW^2_7}
follows from~\eqref{eqUinW^2_2}, where $P_{j\sigma\mu}$ is absent.

Combining~\eqref{eqUinW^2_0} and~\eqref{eqUinW^2_7} yields
\begin{equation}\label{eqUinW^2_8}
\begin{aligned}
\{\bP_j(y,D_y)(\hat U_j-W_j^\ell-\dots-W_j^{2m-2})\}&\in\cH_0^0(K^\varepsilon),\\
\{\bB_{j\sigma\mu}(y,D_y)(\hat U-W^\ell-\dots - W^{2m-2}
)\}&\in\cH_\delta^\bm(\gamma^\varepsilon)\cap \cW^\bm(\gamma^\varepsilon).
\end{aligned}
\end{equation}

4. Since the line $\Im\lambda=1-2m+\delta$ has no eigenvalues of $\tilde{\mathcal
L}(\lambda)$ and relations~\eqref{eqUinW^2_8} hold, it follows from
Lemma~\ref{lAppTh2.2GurPetr03}, Lemma~\ref{lAppL4.3GurPetr03}, and
Conditions~$\ref{condNoEigenProperLambda}$--$\ref{condNoEigenMonom}$ that the function
$\hat U+W^\ell+\dots+ W^{2m-2}$ belongs to the space $\cH_\delta^{2m}(K^\varepsilon)$ up
to a polynomial consisting of monomials of degree $\min\limits_{s\in i\Lambda}s,\dots,
2m-2$ (this polynomial is absent if $\ell=2m-1$). In other words, there is a polynomial
vector $\hat W$ consisting of monomials of degree $l,\dots, 2m-2$ such that
\begin{equation}\label{eqUinW^2_9}
\begin{aligned}
\hat U+\hat W&\in\cH_\delta^{2m}(K^\varepsilon)\\
\{\bP_j(y,D_y)(\hat U_j+\hat W_j)\}&\in\cH_0^0(K^\varepsilon),\\
\{\bB_{j\sigma\mu}(y,D_y)(\hat U+\hat W)\}&\in\cH_\delta^\bm(\gamma^\varepsilon)\cap
\cW^\bm(\gamma^\varepsilon).
\end{aligned}
\end{equation}
Now the conclusion of the lemma follows from Lemma~\ref{lU=C+m2m} and from
relations~\eqref{eqUinW^2_9}
\end{proof}

\begin{lemma}\label{lUinW^2}
Let the hypotheses of Lemma~$\ref{lU=C+m2m}$ be fulfilled, and let
Conditions~$\ref{condNoEigen1-2m}$--$\ref{condNoEigenMonom}$ hold. Then    $U \in\mathcal
W^{2m}(K^\varepsilon)$.
\end{lemma}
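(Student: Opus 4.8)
The plan is to reduce everything to the weighted decomposition already produced by Lemma~\ref{lU=C+} and then to gain one unit of weight by crossing the critical line $\Im\lambda=1-2m$. By Lemma~\ref{lU=C+} we may write $U=W+U'$, where $W=(W_1,\dots,W_N)$ is a polynomial vector of degree $\le 2m-2$, so that $W\in\cW^{2m}(K^\varepsilon)$ trivially, and $U'\in\cH_\delta^{2m}(K^\varepsilon)$ satisfies $\{\bP_j(y,D_y)U_j'\}\in\cH_0^0(K^\varepsilon)$ and $\{\bB_{j\sigma\mu}(y,D_y)U'\}\in\cH_\delta^\bm(\gamma^\varepsilon)\cap\cW^\bm(\gamma^\varepsilon)$, the number $0<\delta<1$ being chosen so that the band $1-2m<\Im\lambda\le 1-2m+\delta$ is free of eigenvalues of $\tilde\cL(\lambda)$. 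It therefore suffices to prove $U'\in\cW^{2m}(K^\varepsilon)$.

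The obstruction to passing from $U'\in\cH_\delta^{2m}$ to $U'\in\cW^{2m}$ sits entirely at homogeneity degree $2m-1$: a term $r^{2m-1}\varphi(\omega)$ belongs to $\cH_\delta^{2m}$ but not to $\cW^{2m}$ unless it is an honest polynomial, since otherwise its top-order derivatives behave like $r^{-1}$ and fail to be square-integrable. Accordingly, I would first isolate the leading boundary datum. As $\bB_{j\sigma\mu}U'\in\cH_\delta^\bm\cap\cW^\bm$, its only component not already lying in $\cH_0^\bm$ is a monomial $c_{j\sigma\mu}r^{2m-1-m_{j\sigma\mu}}$; since $m_{j\sigma\mu}\le 2m-1$ we have $J_{2m-1}=\varnothing$, so the vector $\{c_{j\sigma\mu}\}$ is unconstrained, i.e.\ it lies in $C_{2m-1}=\prod_j\bbC^{2m}$. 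I would then solve the model problem $\cP_j(D_y)W^{2m-1}=0$, $\cB_{j\sigma\mu}(D_y)W^{2m-1}=c_{j\sigma\mu}r^{2m-1-m_{j\sigma\mu}}$.

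The hard part, and the only place where Condition~\ref{condNoEigen1-2m} is used, is to verify that this borderline model solution is genuinely a homogeneous polynomial of degree $2m-1$. The mechanism is a dimension count special to this degree: every homogeneous polynomial of degree $2m-1$ automatically satisfies $\cP_j(D_y)V=0$, because its degree is strictly below the order $2m$ of $\cP_j$, and on each of the $N$ sectors these polynomials form a $2m$-dimensional space, matching exactly the $2m$ boundary functionals indexed by $\sigma=1,2$, $\mu=1,\dots,m$. Since $-i(2m-1)$ is not an eigenvalue, $\tilde\cL(-i(2m-1))$ is injective on $\prod_j W^{2m}(-\omega_j,\omega_j)$, hence injective on this $2mN$-dimensional polynomial subspace; the image lies in $\{0\}\times\prod_j\bbC^{2m}$ (the $L_2$-component vanishes because $\cP_j V=0$), and a comparison of dimensions shows this restricted map is a bijection onto $\{0\}\times\prod_j\bbC^{2m}$. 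Thus the unique solution of the model problem is polynomial, so $W^{2m-1}\in\cW^{2m}(K^\varepsilon)$, and after subtracting it the datum $\bB_{j\sigma\mu}(U'-W^{2m-1})$ lands in $\cH_0^\bm(\gamma^\varepsilon)$.

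Finally I would set $U''=U'-W^{2m-1}\in\cH_\delta^{2m}(K^\varepsilon)$ and invoke the asymptotic-expansion result Lemma~\ref{lAppTh2.2GurPetr03} together with Lemma~\ref{lAppL4.3GurPetr03}. The right-hand sides now satisfy $\bP_jU_j''\in\cH_0^0(K^\varepsilon)$ and $\bB_{j\sigma\mu}U''\in\cH_0^\bm(\gamma^\varepsilon)$, i.e.\ they live in the weight-$0$ spaces; since the closed band $1-2m\le\Im\lambda\le 1-2m+\delta$ contains no eigenvalues of $\tilde\cL(\lambda)$ (by the choice of $\delta$ and Condition~\ref{condNoEigen1-2m}), no singular terms appear in the expansion, and hence $U''\in\cH_0^{2m}(K^\varepsilon)$. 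The embedding $\cH_0^{2m}(K^\varepsilon)\hookrightarrow\cW^{2m}(K^\varepsilon)$ then gives $U''\in\cW^{2m}(K^\varepsilon)$, so $U'=U''+W^{2m-1}\in\cW^{2m}(K^\varepsilon)$ and therefore $U=W+U'\in\cW^{2m}(K^\varepsilon)$, as claimed.
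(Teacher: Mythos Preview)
Your reduction via Lemma~\ref{lU=C+} and your final step (applying Lemma~\ref{lAppTh2.2GurPetr03} across the eigenvalue-free strip $1-2m\le\Im\lambda\le1-2m+\delta$ to land in $\cH_0^{2m}\subset\cW^{2m}$) match the paper's argument exactly. The dimension count showing that the model problem at degree $2m-1$ has a polynomial solution is also correct and rather elegant. The gap is in the middle step.

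You assert that for $\Phi_{j\sigma\mu}:=\bB_{j\sigma\mu}(y,D_y)U'\in\cH_\delta^\bm(\gamma^\varepsilon)\cap\cW^\bm(\gamma^\varepsilon)$ there is a constant $c_{j\sigma\mu}$ with $\Phi_{j\sigma\mu}-c_{j\sigma\mu}r^{2m-1-m_{j\sigma\mu}}\in\cH_0^\bm(\gamma^\varepsilon)$. This is not justified, and in fact is not true in general: the inclusion $\cW^\bm\cap\cH_\delta^\bm\subset\cH_0^\bm+\Span\{r^{2m-1-m_{j\sigma\mu}}\}$ fails. Concretely, set $k=2m-m_{j\sigma\mu}$; membership of $\Phi$ in $\cW^{k-1/2}\cap\cH_\delta^{k-1/2}$ only forces $\Phi^{(s)}(0)=0$ for $s\le k-2$, but $\Phi^{(k-1)}\in W^{1/2}$ has no well-defined point value at $0$, so there is no canonical ``leading coefficient'' to subtract, and the remainder need not satisfy the weighted bound $r^{-1}\Phi^{(k-1)}\in L_2$ that $\cH_0^{k-1/2}$ requires. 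Put differently: to invoke Lemma~\ref{lAppTh2.2GurPetr03} you need the boundary datum in $\cH_0^\bm$, and passing from $\cH_\delta^\bm\cap\cW^\bm$ to $\cH_0^\bm$ across the borderline weight cannot be done by subtracting a single monomial.

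This is precisely the job of Lemma~\ref{lAppL2.4GurRJMP03}, which the paper invokes at this point: it produces a (generally non-polynomial) corrector $V\in\cW^{2m}(K)\cap\cH_\delta^{2m}(K)$ with $\bB_{j\sigma\mu}(U'-V)\in\cH_0^\bm$ and $\bP_j(U_j'-V_j)\in\cH_0^0$. The construction of $V$ genuinely uses Condition~\ref{condNoEigen1-2m} (and is the reason the ``border case'' in Sec.~\ref{sectProperEigen} needs the extra consistency conditions~\eqref{eqConsistencyZ} when an eigenvalue sits on the line). Once you replace your polynomial $W^{2m-1}$ by this $V$, your final paragraph goes through verbatim.
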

\begin{proof} It follows from~\eqref{eqlU=C+00} and from Lemma~\ref{lAppL2.4GurRJMP03} that there
exists a function $ V\in\mathcal H_{\delta}^{2m}(K)\cap\mathcal W^{2m}(K) $ such that
\begin{equation}\label{eqUinW^2_10}
\begin{aligned}
\{\bP_j(y,D_y)(U_j'-V_j)\}&\in\cH_0^0(K^\varepsilon),\\
\{\bB_{j\sigma\mu}(y,D_y)(U'-V )\}&\in\cH_0^\bm(\gamma^\varepsilon).
\end{aligned}
\end{equation}

Due to~\eqref{eqUinW^2_10} and the fact that the strip $1-2m\le \Im\lambda\le
1-2m+\delta$ contains no eigenvalues of $\tilde{\mathcal L}(\lambda)$, we can use
Lemma~\ref{lAppTh2.2GurPetr03}  to obtain that $ U'-V\in\cH_0^{2m}(K^\varepsilon)\subset
\cW^{2m}(K^\varepsilon)$. Combining this relation with Lemma~\ref{lU=C+} completes the
proof.
\end{proof}

Theorem~\ref{thuinW^2NoEigen} results from~\eqref{eqSmoothOutsideK} and from
Lemma~\ref{lUinW^2}.

\section{The Border Case: Consistency Conditions}\label{sectProperEigen}

\subsection{Behavior of Generalized Solutions near the Conjugation Points}\label{subsectuFixed}

Let $\Lambda$ be the same set of eigenvalues of $\tilde\cL(\lambda)$ as in
Sec.~\ref{sectNoEigen}. In this section, we consider the following condition instead of
Condition~\ref{condNoEigen1-2m}.

\begin{condition}\label{condProperEigen}
The line $\Im\lambda=1-2m$ contains only the  eigenvalue $\lambda=i(1-2m)$ of the
operator $\tilde{\mathcal L}(\lambda)$. This eigenvalue is a proper one.
\end{condition}

The principal difference between the results of this section and those of
Sec.~\ref{sectNoEigen} is related to the behavior of generalized solutions near the set
(orbit) $\mathcal K$. If Condition~\ref{condProperEigen} holds, then Lemma~\ref{lU=C+}
remains valid. However, the conclusion of Lemma~\ref{lUinW^2} is no longer true because
Lemma~\ref{lAppL2.4GurRJMP03}   is inapplicable when the line $\Im\lambda=1-2m$ contains
an   eigenvalue of $\tilde{\mathcal L}(\lambda)$. In this section, we make use of other
results from~\cite{GurRJMP03}. To do this, we impose certain consistency conditions on
the behavior of the functions $f_{i\mu}$ and   the coefficients of nonlocal terms near
the set (orbit) $\mathcal K$.

\smallskip

Let $\tau_{j\sigma}$ be the unit vector co-directed with the ray~$\gamma_{j\sigma}$.
Consider the operators
$$
 \frac{\partial^{2m-m_{j\sigma\mu}-1}} {\partial
  \tau_{j\sigma}^{2m-m_{j\sigma\mu}-1}}{\mathcal B}_{j\sigma\mu}U\equiv
  \frac{\partial^{2m-m_{j\sigma\mu}-1}} {\partial
  \tau_{j\sigma}^{2m-m_{j\sigma\mu}-1}} \left(\sum\limits_{k,s}(B_{j\sigma\mu
  ks}(D_y)U_k)({\mathcal G}_{j\sigma ks}y)\right).
$$
Using the chain rule, we can write
\begin{equation}\label{eqDiffB}
\frac{\partial^{2m-m_{j\sigma\mu}-1}} {\partial
  \tau_{j\sigma}^{2m-m_{j\sigma\mu}-1}}{\mathcal B}_{j\sigma\mu}U\equiv
  \sum\limits_{k,s}(\hat B_{j\sigma\mu
  ks}(D_y)U_k)({\mathcal G}_{j\sigma ks}y).
\end{equation}
where $\hat B_{j\sigma\mu ks}(D_y)$ are some homogeneous differential operators of order
$2m-1$ with constant coefficients. Formally replacing the nonlocal operators by the
corresponding local operators in~\eqref{eqDiffB}, we introduce the operators
\begin{equation}\label{eqSystemB}
 \hat{\mathcal B}_{j\sigma\mu}(D_y)U\equiv
 \sum\limits_{k,s}\hat B_{j\sigma\mu ks}(D_y)U_k(y).
\end{equation}

If Condition~\ref{condProperEigen} holds, then the system of operators~\eqref{eqSystemB}
is linearly dependent (see~\cite[Sec.~3.1]{GurRJMP03}). Let
\begin{equation}\label{eqSystemB'}
\{\hat{\mathcal B}_{j'\sigma'\mu'}(D_y)\}
\end{equation}
be a maximal linearly independent subsystem of system~\eqref{eqSystemB}. In this case,
any operator $\hat{\mathcal B}_{j\sigma\mu}(D_y)$ which does not enter
system~\eqref{eqSystemB'} can be represented as follows:
\begin{equation}\label{eqBviaB'}
\hat{\mathcal
B}_{j\sigma\mu}(D_y)=\sum\limits_{j',\sigma',\mu'}\beta_{j\sigma\mu}^{j'\sigma'\mu'}\hat{\mathcal
B}_{j'\sigma'\mu'}(D_y),
\end{equation}
where $\beta_{j\sigma\mu}^{j'\sigma'\mu'}$ are some constants.

Introduce the notion of  consistency condition. Let $\{Z_{j\sigma\mu}\}\in\mathcal
W^{\bm}(\gamma^\varepsilon)$ be a vector of functions, each of which is defined on its
own interval $\gamma_{j\sigma}^\varepsilon$. Consider the functions
$$
Z^0_{j\sigma\mu}(r)=Z_{j\sigma\mu}(y)|_{y=(r\cos\omega_j,\, r(-1)^\sigma\sin\omega_j)}.
$$
Each of the functions $Z^0_{j\sigma\mu}$ belongs to $W^{\mjsigma}(0,\varepsilon)$.

\begin{definition}
Let $\beta_{j\sigma\mu}^{j'\sigma'\mu'}$ be the constants occurring in~\eqref{eqBviaB'}.
If the relations
\begin{equation}\label{eqConsistencyZ}
\int_{0}^\varepsilon
r^{-1}\Bigg|\frac{d^{2m-m_{j\sigma\mu}-1}}{dr^{2m-m_{j\sigma\mu}-1}}Z^0_{j\sigma\mu}-\sum\limits_{j',\sigma',\mu'}
\beta_{j\sigma\mu}^{j'\sigma'\mu'}\frac{d^{2m-m_{j'\sigma'\mu'}-1}}{dr^{2m-m_{j'\sigma'\mu'}-1}}Z^0_{j'\sigma'\mu'}\Bigg|^2dr<\infty
\end{equation}
hold for all indices $j,\sigma,\mu$ corresponding to the operators of
system~\eqref{eqSystemB} which do not enter system~\eqref{eqSystemB'}, then we say that
the {\em functions $Z_{j\sigma\mu}$ satisfy the consistency
condition~\eqref{eqConsistencyZ}}.
\end{definition}

\begin{remark}\label{remSufficCons}
The relation $\{Z_{j\sigma\mu}\}\in\mathcal H_0^{\bm}(\gamma^\varepsilon)$ is sufficient
(but not necessary) for the functions $Z_{j\sigma\mu}$ to satisfy the consistency
condition~\eqref{eqConsistencyZ}. This follows from Lemma~4.18 in~\cite{KondrTMMO67}.
\end{remark}

Now we will show that the following condition is necessary and sufficient for a given
generalized solution $u$ to belong to $W^{2m}(G)$.

\begin{condition}\label{condConsistencyPsi-BC}
Let $u$ be a generalized solution of problem~\eqref{eqPinG}, \eqref{eqBinG},
$\Psi_{j\sigma\mu}$ the right-hand sides in nonlocal conditions~\eqref{eqBinK}, and $W$
the polynomial vector appearing in Lemma~$\ref{lU=C+}$. Then the functions
$\Psi_{j\sigma\mu}-{\mathbf B}_{j\sigma\mu}(y,D_y)W$ satisfy the consistency
condition~\eqref{eqConsistencyZ}.
\end{condition}

\begin{theorem}\label{thuinW^2ProperEigen}
Let Conditions~$\ref{condProperEigen}$
and~$\ref{condNoEigenProperLambda}$--$\ref{condNoEigenMonom}$ hold, and let $u$ be a
generalized solution of problem~\eqref{eqPinG}, \eqref{eqBinG} with right-hand side
$\{f_0,f_{i\mu}\}\in L_2(G)\times \mathcal W^{\bm}(\partial G)$. Then $u\in W^{2m}(G)$ if
and only if Condition~$\ref{condConsistencyPsi-BC}$ holds.
\end{theorem}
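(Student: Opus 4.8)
The plan is to localize near the orbit $\cK$ and reduce everything to the model problem~\eqref{eqPinK}, \eqref{eqBinK} in the angles $K_j$, exactly as in the proof of Theorem~\ref{thuinW^2NoEigen}. Since $u\in W^{2m}(G\setminus\overline{\cO_\delta(\cK)})$ for all $\delta>0$ by~\eqref{eqSmoothOutsideK}, the inclusion $u\in W^{2m}(G)$ is equivalent to $U\in\cW^{2m}(K^\varepsilon)$, where $U=(U_1,\dots,U_N)$ is the localized solution. First I would invoke Lemma~\ref{lU=C+}, which stays in force under Condition~\ref{condProperEigen}: it gives $U=W+U'$ with $W$ a polynomial vector of degree $2m-2$ and $U'\in\cH_\delta^{2m}(K^\varepsilon)$ satisfying~\eqref{eqlU=C+00}, where $0<\delta<1$ is chosen so that the strip $1-2m<\Im\lambda\le1-2m+\delta$ contains no eigenvalues of $\tilde\cL(\lambda)$. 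As $W$ is a polynomial, $U\in\cW^{2m}(K^\varepsilon)$ if and only if $U'\in\cW^{2m}(K^\varepsilon)$, and by~\eqref{eqBinK} the boundary data of $U'$ equals $\bB_{j\sigma\mu}(y,D_y)U'=\Psi_{j\sigma\mu}-\bB_{j\sigma\mu}(y,D_y)W$, which is precisely the vector appearing in Condition~\ref{condConsistencyPsi-BC}.

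The decisive point is the asymptotic behaviour of $U'$ as $r\to0$. Unlike in Sec.~\ref{sectNoEigen}, the strip $1-2m\le\Im\lambda\le1-2m+\delta$ now contains the eigenvalue $\lambda_0=i(1-2m)$ on its lower boundary line, so the passage from $\cH_\delta^{2m}(K^\varepsilon)$ towards $\cH_0^{2m}(K^\varepsilon)$, where Lemma~\ref{lAppL2.4GurRJMP03} no longer applies, yields a contribution from $\lambda_0$. The plan is to quote the corresponding asymptotic expansion from~\cite{GurRJMP03}: because $\lambda_0$ is proper, its contribution has the form $\sum_n\big(c_0^{(n)}+c_1^{(n)}\,i\ln r\big)\,r^{2m-1}\varphi^{(n)}(\omega)$, where $\{\varphi^{(n)}\}$ is a basis of $\ker\tilde\cL(\lambda_0)$ and the remainder lies in $\cH_0^{2m}(K^\varepsilon)\subset\cW^{2m}(K^\varepsilon)$. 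Each pure power $r^{2m-1}\varphi^{(n)}(\omega)$ is a homogeneous polynomial of degree $2m-1$ by properness, hence smooth, while each term $r^{2m-1}(\ln r)\varphi^{(n)}(\omega)$ has $2m$-th derivatives of order $r^{-1}$ and so fails to belong to $W^{2m}$. Consequently $U'\in\cW^{2m}(K^\varepsilon)$ if and only if all the logarithmic coefficients $c_1^{(n)}$ vanish.

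It then remains to identify the coefficients $c_1^{(n)}$ with the consistency defect in~\eqref{eqConsistencyZ}. Writing the boundary data $Z_{j\sigma\mu}=\Psi_{j\sigma\mu}-\bB_{j\sigma\mu}(y,D_y)W$ and freezing the coefficients at the vertex, the borderline part of $Z_{j\sigma\mu}$ behaves like $r^{2m-m_{j\sigma\mu}-1}$; applying $d^{2m-m_{j\sigma\mu}-1}/dr^{2m-m_{j\sigma\mu}-1}$ turns this into a bounded quantity at $r=0$, so the Hardy-type integral~\eqref{eqConsistencyZ} is finite exactly when the $\beta$-weighted combinations of these boundary ``constants'' vanish. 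The content to be imported from~\cite{GurRJMP03} is that these very combinations are nonzero multiples of the $c_1^{(n)}$; the link is that properness of $\lambda_0$ is equivalent to the linear dependence of the differentiated system~\eqref{eqSystemB}, and the dependence coefficients $\beta_{j\sigma\mu}^{j'\sigma'\mu'}$ of~\eqref{eqBviaB'} are paired against $\ker(\tilde\cL(\lambda_0))^*$. Granting this, both directions follow at once: if Condition~\ref{condConsistencyPsi-BC} holds then all $c_1^{(n)}=0$, so $U'\in\cW^{2m}(K^\varepsilon)$ and $u\in W^{2m}(G)$; conversely, if it fails then some $c_1^{(n)}\ne0$ and $U'$ carries a genuine $r^{2m-1}\ln r$ singularity, so $u\notin W^{2m}(G)$.

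The main obstacle is precisely the identification in the third paragraph: establishing, via the Laurent expansion of $\tilde\cL(\lambda)^{-1}$ at $\lambda_0$ and the structure~\eqref{eqBviaB'}, that the logarithmic coefficients are given by the $\beta$-weighted boundary functionals entering~\eqref{eqConsistencyZ}, and that the relevant pairing matrix is nondegenerate so that their vanishing is genuinely equivalent to~\eqref{eqConsistencyZ}. All of this is the quantitative refinement, in the presence of a proper eigenvalue on the critical line, of the residue-shifting argument used in Lemma~\ref{lUinW^2}; the interior datum $\{\bP_jU_j'\}\in\cH_0^0(K^\varepsilon)$, which also sits at the critical line, must be carried along in the computation, and part of what is to be imported from~\cite{GurRJMP03} is that it produces no obstruction beyond the boundary consistency~\eqref{eqConsistencyZ}, since it can be absorbed using Conditions~\ref{condNoEigenProperLambda}--\ref{condNoEigenMonom} as in Lemma~\ref{lU=C+}.
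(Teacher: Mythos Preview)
Your plan diverges from the paper's proof in a way that makes the argument substantially harder than necessary, and the step you flag as ``the main obstacle'' is in fact the entire content of the result once you choose this route.

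The paper's proof is short and black-boxes exactly the three results from~\cite{GurRJMP03} recorded in the appendix. For the \emph{necessity}, no asymptotic expansion is used at all: if $u\in W^{2m}(G)$ then $U'=U-W\in\cW^{2m}(K^\varepsilon)\cap\cH_\delta^{2m}(K^\varepsilon)$, so Sobolev embedding gives $D^\alpha U'(0)=0$ for $|\alpha|\le 2m-2$, and Lemma~\ref{lAppL3.1GurRJMP03} then yields directly that $\bB_{j\sigma\mu}(y,D_y)U'=\Psi_{j\sigma\mu}-\bB_{j\sigma\mu}(y,D_y)W$ satisfies~\eqref{eqConsistencyZ}. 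For the \emph{sufficiency}, the paper does not try to expand $U'$ at the critical eigenvalue; instead, Lemma~\ref{lAppL3.3GurRJMP03} (which takes the consistency condition as hypothesis) produces a corrector $V\in\cH_\delta^{2m}(K)\cap\cW^{2m}(K)$ with $\{\bP_j(U_j'-V_j)\}\in\cH_0^0$ and $\{\bB_{j\sigma\mu}(U'-V)\}\in\cH_0^{\bm}$, after which Lemma~\ref{lAppL3.4GurRJMP03} gives $D^\alpha(U'-V)\in\cW^0(K^\varepsilon)$ for $|\alpha|=2m$, and one finishes via $\cH_\delta^{2m}\subset\cH_0^{2m-1}\subset\cW^{2m-1}$.

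Your scheme instead attempts a direct asymptotic expansion of $U'$ across the critical line, writing the contribution of $\lambda_0=i(1-2m)$ as $\sum_n(c_0^{(n)}+c_1^{(n)}\,i\ln r)\,r^{2m-1}\varphi^{(n)}(\omega)$ plus a remainder in $\cH_0^{2m}(K^\varepsilon)$. This is where the genuine gap lies. The standard expansion (Lemma~\ref{lAppTh2.2GurPetr03}) requires the destination line to be free of eigenvalues, which fails here; and the boundary data of $U'$ lies only in $\cH_\delta^{\bm}\cap\cW^{\bm}$, not in $\cH_0^{\bm}$, so you cannot expect a remainder in $\cH_0^{2m}$ without first absorbing the borderline part of the data. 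That absorption is precisely what Lemma~\ref{lAppL3.3GurRJMP03} does (and what makes the consistency condition appear), so your ``identification'' of the $c_1^{(n)}$ with the $\beta$-weighted defects~\eqref{eqConsistencyZ} via Laurent expansion and the pairing with $\ker(\tilde\cL(\lambda_0))^*$ is, in effect, a proposal to re-prove Lemmas~\ref{lAppL3.1GurRJMP03}--\ref{lAppL3.4GurRJMP03} inside this proof. That is a legitimate programme, but it is not shorter, and as it stands the expansion you assert is not justified by the tools in the paper.

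In short: the localization and the use of Lemma~\ref{lU=C+} are exactly right, but from that point on the paper argues via the corrector $V$ and the three appendix lemmas, avoiding any explicit logarithmic asymptotics; your route would need those asymptotics and the coefficient identification to be established, which is the nontrivial part you correctly identify but do not carry out.
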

\begin{proof} 1. {\em Necessity.} Let $u\in W^{2m}(G)$. Let the function $U=(U_1,\dots,U_N)$
correspond  to the set (orbit) $\mathcal K$. Clearly, $U\in\mathcal
W^{2m}(K^\varepsilon)$. It follows from Lemma~\ref{lU=C+} that $U=W+U'$, where
$U'\in\mathcal H_\delta^{2m}(K^\varepsilon)$, $0<\delta<1$. Since we additionally have
$U'=U-W\in\mathcal W^{2m}(K^\varepsilon)$, it follows from Sobolev's embedding theorem
that $D^\alpha U'(0)=0$, $|\alpha|\le 2m-2$. These relations and
Lemma~\ref{lAppL3.1GurRJMP03} imply that the functions $\Psi_{j\sigma\mu}-{\mathbf
B}_{j\sigma\mu}W={\mathbf B}_{j\sigma\mu}(y,D_y)U'$ satisfy the consistency
condition~\eqref{eqConsistencyZ}.

2. {\em Sufficiency.} Suppose that Condition~$\ref{condConsistencyPsi-BC}$ holds. It
follows from~\eqref{eqlU=C+00} and from Lemma~\ref{lAppL3.3GurRJMP03} that there exists a
function $ V\in\mathcal H_{\delta}^{2m}(K)\cap\mathcal W^{2m}(K) $ ($\delta$ is the same
as in Lemma~\ref{lU=C+}) such that
\begin{equation}\label{eqUinW^2_10'}
\begin{aligned}
\{\bP_j(y,D_y)(U_j'-V_j)\}&\in\cH_0^0(K^\varepsilon),\\
\{\bB_{j\sigma\mu}(y,D_y)(U' -V )\}&\in\cH_0^\bm(\gamma^\varepsilon).
\end{aligned}
\end{equation}

Due to~\eqref{eqUinW^2_10'} and the fact that the strip $1-2m\le \Im\lambda\le
1-2m+\delta$ contains only the proper eigenvalue $i(1-2m)$ of $\tilde{\mathcal
L}(\lambda)$, we can use Lemma~\ref{lAppL3.4GurRJMP03} to obtain that all the derivatives
of order $2m$ of the function $U'-V$ belong to $\mathcal W^0(K^\varepsilon)$. It follows
from this fact and from   the relations
$$
U'-V\in\mathcal H_{\delta}^{2m}(K^\varepsilon)\subset\mathcal
H_{0}^{2m-1}(K^\varepsilon)\subset\mathcal W^{2m-1}(K^\varepsilon)
$$
that $U'-V\in\mathcal W^{2m}(K^\varepsilon)$. Combining this relation with
Lemma~\ref{lU=C+}, we complete the proof of the sufficiency part.
\end{proof}

Note that Theorem~\ref{thuinW^2ProperEigen} enables us to conclude whether or not a
\textit{given} solution $u$ is smooth near the set $\mathcal K$, provided that we know
the asymptotics for $u$ of the kind~\eqref{eqlU=C+0} near the set $\mathcal K$ (i.e., if
we know the polynomial vector  $W$). Theorem~\ref{thuinW^2ProperEigen} shows what affects
the smoothness of solutions in principle. Below, this will enable us to obtain a
constructive condition which is necessary and sufficient for {\em any} generalized
solution to belong to $W^{2m}(G)$.

\subsection{Problem with Nonhomogeneous Nonlocal Conditions}\label{subsectNonHomogProb}

First of all, we show that the right-hand sides $f_{i\mu}$ in nonlocal
conditions~\eqref{eqBinG} must satisfy a certain consistency condition  in order that
generalized solutions be smooth.

Denote by $\mathcal S^{\bm}(\partial G)$ the set of functions $\{f_{i\mu}\}\in\mathcal
W^{\bm}(\partial G)$ such that the functions $F_{j\sigma\mu}$ (see~\eqref{eqytoy'})
satisfy the consistency condition~\eqref{eqConsistencyZ}. It follows from~Lemma~3.2
in~\cite{GurRJMP03} that the set $\mathcal S^{\bm}(\partial G)$ is not closed in the
space $\mathcal W^{\bm}(\partial G)$.

\begin{theorem}\label{thUNonSmFNonConsist}
Let  Conditions~$\ref{condProperEigen}$
and~$\ref{condNoEigenProperLambda}$--$\ref{condNoEigenMonom}$ hold. Then there exist a
function $\{f_0,f_{i\mu}\}\in L_2(G)\times \mathcal W^{\bm}(\partial G)$,
$\{f_{i\mu}\}\notin\mathcal S^{\bm}(\partial G)$, and a function $u\in W^{2m-1}(G)$ such
that $u$ is a generalized solution of problem~\eqref{eqPinG}, \eqref{eqBinG} with the
right-hand side $\{f_0,f_{i\mu}\}$ and $u\notin W^{2m}(G)$.
\end{theorem}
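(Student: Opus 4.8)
The plan is to prove Theorem~\ref{thUNonSmFNonConsist} by \emph{constructing} an explicit counterexample. The key idea is to exploit the border case (Condition~\ref{condProperEigen}): since the line $\Im\lambda=1-2m$ carries the proper eigenvalue $i(1-2m)$, the model operator $\tilde\cL(\lambda)$ has a nontrivial kernel on this critical line, and a homogeneous polynomial of degree $2m-1$ lies in the corresponding eigenspace. This polynomial is precisely the borderline asymptotic term that separates $W^{2m-1}$-regularity from $W^{2m}$-regularity. First I would pick a nonzero eigenvector $\varphi$ of $\tilde\cL(i(1-2m))$ and set $V_0(y)=r^{2m-1}\varphi(\omega)$, which is a homogeneous function of degree $2m-1$ in the model cone; because the eigenvalue is proper, $V_0$ is in fact a homogeneous polynomial, so $V_0\in\cW^{2m-1}(K^\varepsilon)$ but $V_0\notin\cW^{2m}(K^\varepsilon)$ (its $2m$-th derivatives fail to be square-integrable near the vertex, as $r^{2m-1}$ sits exactly at the critical weight).

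Next I would transplant this local model solution back to the domain $G$. Using a cutoff $\zeta$ supported near $\cK$, I would build a function $u\in W^{2m-1}(G)$ whose restriction near each $g_j$ equals the transplanted $V_0$ (via the charts $Y_j$ of Sec.~\ref{subsectStatement}), smoothly interpolated to zero away from $\cK$. I then \emph{define} the right-hand side by applying the operators to $u$: set $f_0=\bP(y,D_y)u$ and $f_{i\mu}=\bB^0_{i\mu}u+\bB^1_{i\mu}u+\bB^2_{i\mu}u$. By construction $u$ solves problem~\eqref{eqPinG}, \eqref{eqBinG} with this right-hand side, so $u$ is a generalized solution in the sense of Definition~\ref{defGenSol2}, and since $u\in W^{2m-1}(G)$ away from $\cK$ by Theorem~\ref{thuinW^2NoEigen}'s machinery (or by $u$ being smooth there by construction) one checks $\{f_0,f_{i\mu}\}\in L_2(G)\times\cW^{\bm}(\partial G)$. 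The crucial point is that $u\notin W^{2m}(G)$ precisely because its leading singular term is $V_0=r^{2m-1}\varphi(\omega)$ near the vertices.

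The final and most delicate step is to verify that $\{f_{i\mu}\}\notin\cS^{\bm}(\partial G)$, i.e.\ that the traces $F_{j\sigma\mu}$ violate the consistency condition~\eqref{eqConsistencyZ}. Here I would invoke the necessity argument inside the proof of Theorem~\ref{thuinW^2ProperEigen}: if $\{f_{i\mu}\}$ \emph{did} satisfy the consistency condition, then the polynomial vector $W$ in Lemma~\ref{lU=C+} would have to absorb the singularity, forcing $u\in W^{2m}(G)$ by the sufficiency half of Theorem~\ref{thuinW^2ProperEigen}, contradicting $u\notin W^{2m}(G)$. Thus the very same $u$ witnesses $\{f_{i\mu}\}\notin\cS^{\bm}(\partial G)$. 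Equivalently, one uses the fact (cited after the definition of $\cS^{\bm}(\partial G)$, via Lemma~3.2 in~\cite{GurRJMP03}) that $\cS^{\bm}(\partial G)$ is not closed, which guarantees the existence of right-hand sides arbitrarily close to consistent ones yet producing non-smooth solutions.

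The main obstacle I anticipate is the bookkeeping needed to ensure the global function $u$ has \emph{exactly} the right regularity: it must lie in $W^{2m-1}(G)$ globally (so that the right-hand side traces $f_{i\mu}$ genuinely land in $\cW^{\bm}(\partial G)$ rather than a worse weighted space), while simultaneously failing $W^{2m}$-membership only because of the transplanted eigenfunction and not through any artifact of the cutoff or the chart transformations. In particular one must confirm that the nonlocal operators $\bB^1_{i\mu}$ and $\bB^2_{i\mu}$, applied to the singular $u$, still yield boundary data of the correct smoothness class; this uses Condition~\ref{condSeparK23} (estimates~\eqref{eqSeparK23'} and~\eqref{eqSeparK23''}) to control the ``outside'' terms $\bB^2_{i\mu}u$ and the support properties of $\zeta$ to control the ``near'' terms $\bB^1_{i\mu}u$. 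Once these regularity checks are in place, the failure of consistency follows formally from the contrapositive of Theorem~\ref{thuinW^2ProperEigen}.
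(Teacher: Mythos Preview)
Your construction has a fatal flaw at the very first step. You set $V_0=r^{2m-1}\varphi(\omega)$ where $\varphi$ is an eigenvector of $\tilde\cL(\lambda)$ at $\lambda_0=i(1-2m)$, and then claim $V_0\notin\cW^{2m}(K^\varepsilon)$. But Condition~\ref{condProperEigen} says this eigenvalue is \emph{proper}: by Definition~\ref{defRegEigVal}, every eigenvector $\varphi$ has the property that $r^{2m-1}\varphi(\omega)$ is a homogeneous polynomial of degree $2m-1$. A polynomial is $C^\infty$, so $V_0\in\cW^{2m}(K^\varepsilon)$ trivially (its $2m$-th derivatives vanish identically). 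This is exactly the content of Lemma~\ref{lAppL4.20Kondr}: the function $r^{2m-1}\Phi(\omega)$ belongs to $W^{2m}$ \emph{if and only if} it is a polynomial, and the properness of the eigenvalue forces precisely this. So your transplanted $u$ lies in $W^{2m}(G)$, and the whole counterexample collapses. There is no way to repair this using associated vectors either, since a proper eigenvalue has none.

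The paper proceeds in the opposite direction: it first fixes boundary data $\{Z_{j\sigma\mu}\}\in\cW^{\bm}(\gamma^\varepsilon)$ that \emph{violate} the consistency condition~\eqref{eqConsistencyZ} (such data exist by the argument in Lemma~3.2 of~\cite{GurRJMP03}), and then constructs a solution $U$ matching these data. This is done in Lemma~\ref{lUnonSmoothZnonConsist} by approximating $Z$ with a sequence $Z^n$ vanishing near the origin, solving the model problem exactly for each $Z^n$ to get $V^n\in\cW^{2m}\cap\cH_\delta^{2m}$, and passing to the limit in $\cH_\delta^{2m}$ only. The limit $V$ lies in $\cH_\delta^{2m}\subset\cW^{2m-1}$ but cannot lie in $\cW^{2m}$: if it did, Lemma~\ref{lAppL3.1GurRJMP03} would force the boundary data to satisfy consistency, contradicting the choice of $Z$. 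After cutting off and transplanting $U$ to $G$ (supported inside $\cO_{\varkappa_1}(\cK)$, so $\bB_{i\mu}^2u=0$), one reads off $f_0$ and $f_{i\mu}$ directly. The non-smoothness of $u$ is thus \emph{deduced} from the failure of consistency, not the other way around; your proposed contrapositive via Theorem~\ref{thuinW^2ProperEigen} would also need Condition~\ref{condB2vB1CConsistency} or its analogue to pass from $\{f_{i\mu}\}\in\cS^{\bm}(\partial G)$ to Condition~\ref{condConsistencyPsi-BC}, which is not assumed here.
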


To prove Theorem~\ref{thUNonSmFNonConsist}, we preliminarily establish an auxiliary
result. Set
\begin{equation}\label{eqEpsilon'}
\varepsilon'=d_\chi\min(\varepsilon,\varkappa_2),
\end{equation}
where $d_\chi$ is defined in~\eqref{eqd1d2}.

\begin{lemma}\label{lUnonSmoothZnonConsist}
Let Condition~$\ref{condProperEigen}$ hold and a function $\{Z_{j\sigma\mu}\}\in\mathcal
W^{\bm}(\gamma^\varepsilon)$ be such that $\supp \{Z_{j\sigma\mu}\}\subset\mathcal
O_{\varepsilon/2}(0)$, $\dfrac{\partial^\beta}{\partial\tau_{j\sigma}^\beta}
Z_{j\sigma\mu}(0)=0$, $\beta\le 2m-m_{j\sigma\mu}-2$, and the functions $Z_{j\sigma\mu}$
do not satisfy the consistency condition~\eqref{eqConsistencyZ}. Then there exists a
function $U\in\mathcal H_\delta^{2m}(K)\subset\mathcal W^{2m-1}(K)$, $\delta>0$ is
arbitrary, such that $\supp U\subset\mathcal O_{\varepsilon'}(0)$, $U\notin\mathcal
W^{2m}(K^\varepsilon)$, and $U$ satisfies the relations
\begin{equation}\label{eqUnonSmoothZnonConsist}
\{{\mathbf P}_{j}(y,D_y)U_j\}\in\mathcal W^0(K^\varepsilon),\quad \{{\mathbf
B}_{j\sigma\mu}(y,D_y)U-Z_{j\sigma\mu}\}\in\mathcal H_0^{\bm}( \gamma^\varepsilon).
\end{equation}
\end{lemma}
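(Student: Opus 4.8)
The plan is to construct $U$ as a localization of the solution of the model problem with right-hand side $\{0,Z_{j\sigma\mu}\}$, and to detect its failure to lie in $\cW^{2m}(K^\varepsilon)$ through a resonant logarithmic term forced by the violation of the consistency condition. The guiding observation is that, under Condition~\ref{condProperEigen}, the only eigenvalue of $\tilde\cL(\lambda)$ on the line $\Im\lambda=1-2m$ is the proper one $\lambda_0=i(1-2m)$, whose eigenfunctions yield homogeneous polynomials $r^{2m-1}\varphi(\omega)$ of degree $2m-1$. Such polynomials are smooth (their $2m$-th derivatives vanish) and hence do not obstruct $\cW^{2m}$-regularity; the obstruction can come only from an associated term $r^{2m-1}(i\ln r)\varphi(\omega)$, whose $2m$-th derivatives behave like $r^{-1}$ times a bounded factor and therefore fail to be square integrable near the vertex, while still lying in $\cH_\delta^{2m}(K^\varepsilon)$ for every $\delta>0$. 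Thus it suffices to produce $U$ whose asymptotics near $0$ contain such a logarithmic term with a nonzero coefficient.

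First I would record the weighted class of the data. Since the consistency condition fails, Remark~\ref{remSufficCons} gives $\{Z_{j\sigma\mu}\}\notin\cH_0^\bm(\gamma^\varepsilon)$, whereas the vanishing conditions $\partial^\beta Z_{j\sigma\mu}/\partial\tau_{j\sigma}^\beta(0)=0$ for $\beta\le 2m-m_{j\sigma\mu}-2$ together with $\{Z_{j\sigma\mu}\}\in\cW^\bm(\gamma^\varepsilon)$ place the data in the intermediate weighted class for which the model problem is solvable in $\cH_\delta^{2m}$. I would then freeze the coefficients of $\bP_j$ and $\bB_{j\sigma\mu}$ at the vertex, replacing them by the homogeneous operators $\cP_j(D_y)$ and $\cB_{j\sigma\mu}(D_y)$ of~\eqref{eqCalPCalB}; the differences are of lower order and, after multiplication by the corresponding weight, contribute only terms lying in $\cW^0(K^\varepsilon)$ and $\cH_0^\bm(\gamma^\varepsilon)$, which are precisely the error classes permitted on the right-hand side of~\eqref{eqUnonSmoothZnonConsist}.

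The heart of the construction is the inverse Mellin transform. Writing $\hat Z(\lambda)$ for the Mellin transform of the traces of the functions $Z_{j\sigma\mu}$ on the rays, and solving the algebraic system $\tilde\cL(\lambda)\hat U(\lambda)=\{0,\hat Z(\lambda)\}$ by means of $\tilde\cL(\lambda)^{-1}$ (meromorphic with poles at the eigenvalues, by Lemma~\ref{lSpectrum}), I would set
\begin{equation*}
U_j(r,\omega)=\frac{1}{2\pi}\int_{\Im\lambda=1-2m+\delta} r^{i\lambda}\,\big[\tilde\cL(\lambda)^{-1}\{0,\hat Z(\lambda)\}\big]_j(\omega)\,d\lambda .
\end{equation*}
This lands in $\cH_\delta^{2m}(K^\varepsilon)$ because the contour lies just above the critical line and, by Condition~\ref{condProperEigen} and discreteness, no eigenvalue lies in the open strip $1-2m<\Im\lambda\le 1-2m+\delta$. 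To expose the obstruction I would shift the contour downward across $\lambda_0=i(1-2m)$. The resonant component of the data is the $r^{2m-m_{j\sigma\mu}-1}$ term of $Z_{j\sigma\mu}$ (the lowest order allowed by the vanishing conditions, matching the shift by $m_{j\sigma\mu}$ of the boundary operators against $r^{2m-1}$); its Mellin transform has a simple pole at $\lambda_0$, and $\tilde\cL(\lambda)^{-1}$ has a simple pole there as well. The product therefore exhibits a double pole at $\lambda_0$, whose residue contributes $r^{2m-1}(a+b\,i\ln r)\varphi(\omega)$ with $\varphi\in\ker\tilde\cL(\lambda_0)$. The logarithmic coefficient $b$ is governed by the pairing of the top-derivative combination in~\eqref{eqConsistencyZ} against an element $\psi\in\ker(\tilde\cL(\lambda_0))^*$, and it is exactly this quantity that is finite when the consistency condition holds and forces $b\ne0$ when it fails.

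Finally I would multiply $U$ by a cutoff equal to $1$ near $0$ and supported in $\cO_{\varepsilon'}(0)$, with $\varepsilon'$ as in~\eqref{eqEpsilon'}; the commutator terms are supported away from the vertex, hence smooth, and fall into the admissible error classes of~\eqref{eqUnonSmoothZnonConsist}. The resulting $U$ satisfies $\supp U\subset\cO_{\varepsilon'}(0)$, lies in $\cH_\delta^{2m}(K)\subset\cW^{2m-1}(K)$, obeys $\{\bP_j U_j\}\in\cW^0(K^\varepsilon)$ and $\{\bB_{j\sigma\mu}U-Z_{j\sigma\mu}\}\in\cH_0^\bm(\gamma^\varepsilon)$, and fails to belong to $\cW^{2m}(K^\varepsilon)$ because of the surviving $r^{2m-1}\ln r$ term. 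I expect the main difficulty to be the quantitative identification of the logarithmic coefficient with the divergent integral in~\eqref{eqConsistencyZ}: proving that the residue pairing against $\ker(\tilde\cL(\lambda_0))^*$ reproduces exactly the linear combination of highest $r$-derivatives appearing in the consistency condition, and that its nonvanishing is equivalent to the failure of~\eqref{eqConsistencyZ}. This is where the proper structure of $\lambda_0$ and the representation~\eqref{eqBviaB'} of the dependent operators enter decisively, and where I would rely on the corresponding analysis in~\cite{GurRJMP03}.
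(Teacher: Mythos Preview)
Your construction of $U$ as a cutoff of the Mellin-inversion solution is sound: the function $V$ obtained by integrating $r^{i\lambda}\tilde\cL(\lambda)^{-1}\{0,\hat Z(\lambda)\}$ along $\Im\lambda=1-2m+\delta$ does lie in $\cH_\delta^{2m}$, and the localization to $\cO_{\varepsilon'}(0)$ works as you describe. The gap is in the step you yourself flag as the main difficulty---showing $U\notin\cW^{2m}(K^\varepsilon)$---and there the proposed mechanism breaks down.

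The claim that the ``resonant component $r^{2m-m_{j\sigma\mu}-1}$ of $Z$'' has a Mellin transform with a simple pole at $\lambda_0$ is not correct. The data $Z_{j\sigma\mu}$ is only a $W^{2m-m_{j\sigma\mu}-1/2}$ function with derivatives of order $\le 2m-m_{j\sigma\mu}-2$ vanishing at the origin; it has no well-defined Taylor coefficient of order $2m-m_{j\sigma\mu}-1$, and its Mellin transform $\hat Z(\lambda)$ is an $L^2$ function along each line $\Im\lambda=1-2m+\delta$, not a meromorphic object. There is no double pole and hence no residue producing an isolated $r^{2m-1}\ln r$ term. The failure of~\eqref{eqConsistencyZ} is an $L^2$-integrability statement about a combination of top-order $r$-derivatives of $Z$, not a pointwise condition at $\lambda_0$; the obstruction to $U\in\cW^{2m}$ is a genuine failure of square-integrability of the $2m$-th derivatives, not a discrete asymptotic term that one can read off by contour shifting.

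The paper bypasses this entirely with an indirect argument. It approximates $Z$ by functions $Z^n$ vanishing near the origin (Lemma~\ref{lDense}), invokes Lemma~3.5 of~\cite{GurRJMP03} to obtain solutions $V^n\in\cW^{2m}\cap\cH_\delta^{2m}$ of the homogeneous-coefficient model problem with boundary data $Z^n$, and passes to the limit $V\in\cH_\delta^{2m}$. After cutting off to get $U=\xi V$, the claim $U\notin\cW^{2m}(K^\varepsilon)$ is proved by contradiction: if $U\in\cW^{2m}$, then together with $U\in\cH_\delta^{2m}$ and Sobolev embedding one gets $D^\alpha U(0)=0$ for $|\alpha|\le 2m-2$, and Lemma~\ref{lAppL3.1GurRJMP03} then forces $\bB_{j\sigma\mu}(y,D_y)U$ to satisfy the consistency condition. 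Since $\bB_{j\sigma\mu}(y,D_y)U-Z_{j\sigma\mu}\in\cH_0^\bm(\gamma^\varepsilon)$ satisfies it automatically (Remark~\ref{remSufficCons}), so would $Z$---a contradiction. This uses the necessity direction of the consistency condition as a black box and never needs to identify an explicit singular term.
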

\begin{proof} By Lemma~\ref{lDense}, there exists a sequence of functions
$\{Z_{j\sigma\mu}^n\}\in\mathcal W^{\bm}(\gamma)$, $n=1,2,\dots$, such that $\supp
Z_{j\sigma\mu}^n\subset\mathcal O_{\varepsilon}(0)$, $Z_{j\sigma\mu}^n$ vanish near the
origin (hence, they  satisfy the consistency condition~\eqref{eqConsistencyZ}), and
$\{Z_{j\sigma\mu}^n\}\to \{Z_{j\sigma\mu}\}$ in $W^{\bm}(\gamma)$. Taking into account
Lemma~\ref{lAppL2.1GurRJMP03}, we also see that $\{Z_{j\sigma\mu}^n\}\to
\{Z_{j\sigma\mu}\}$ in $H_\delta^{\bm}(\gamma)$, $\delta>0$ is arbitrary. Lemma~3.5
in~\cite{GurRJMP03}  ensures the existence of a sequence $V^n=(V_1^n,\dots,V_N^n)$
satisfying the following conditions: $V^n\in\mathcal W^{2m}(K^d)\cap \mathcal
H_\delta^{2m}(K^d)$ for any $d>0$,
\begin{equation}\label{eqPBVdeltainK}
{\mathcal P}_{j}(D_y)V_j^n=0 \quad (y\in K_j),\qquad {\mathcal B}_{j\sigma\mu}(D_y)V^n
=Z_{j\sigma\mu}^n(y) \quad (y\in\gamma_{j\sigma}),
\end{equation}
and the sequence $V^n$ converges to a function $V\in\mathcal H_\delta^{2m}(K^d)$ in
$\mathcal H_\delta^{2m}(K^d)$ for any $d>0$. Passing to the limit
in~\eqref{eqPBVdeltainK} (in the spaces $\cH_\delta^0(K^d)$ and $\cH_\delta^{\bm}(K^d)$,
respectively), we obtain
\begin{equation}\label{eqPBVinK}
{\mathcal P}_{j}(D_y)V_j=0 \quad (y\in K_j),\qquad {\mathcal B}_{j\sigma\mu}(D_y)V
=Z_{j\sigma\mu}(y) \quad (y\in\gamma_{j\sigma}).
\end{equation}

Consider a cut-off function $\xi\in C_0^\infty(\cO_{\varepsilon'}(0))$ equal to one near
the origin. Set $U=\xi V$. Clearly, $\supp U\subset\mathcal O_{\varepsilon'}(0)$ and
$$
U\in \mathcal H_\delta^{2m}(K)\subset\mathcal W^{2m-1}(K).
$$

2. We claim that $U$ is the desired function. Indeed, using Leibniz' formula,
relations~\eqref{eqPBVinK} and Lemma~\ref{lAppL3.3'Kondr}, we
infer~\eqref{eqUnonSmoothZnonConsist}.

It remains to prove that $U\notin \mathcal W^{2m}(K^\varepsilon)$. Assume the contrary.
Let $U\in \mathcal W^{2m}(K^\varepsilon)$. In this case, it follows from Sobolev's
embedding theorem and from the belonging $U\in\mathcal H_\delta^{2m}(K^\varepsilon)$
($\delta>0$ is arbitrary) that $D^\alpha U(0)=0$, $|\alpha|\le 2m-2$. Combining this fact
with Lemma~\ref{lAppL3.1GurRJMP03} implies that the functions ${\mathbf
B}_{j\sigma\mu}(y,D_y)U$ satisfy the consistency condition~\eqref{eqConsistencyZ}.
However, the functions ${\mathbf B}_{j\sigma\mu}(y,D_y)U-Z_{j\sigma\mu}$ do not satisfy
the consistency condition~\eqref{eqConsistencyZ} in that case. This
contradicts~\eqref{eqUnonSmoothZnonConsist} (see Remark~\ref{remSufficCons}).
\end{proof}

{\bf Proof of Theorem~$\ref{thUNonSmFNonConsist}$.} 1. We will
construct a generalized solution $u\notin W^{2m}(G)$ supported
near the set $\mathcal K$ so that $\mathbf B^2_{i\mu}u=0$ due
to~\eqref{eqSeparK23'}.

It was shown in the course of the proof of Lemma~3.2 in~\cite{GurRJMP03} that there
exists a function $\{Z_{j\sigma\mu}\}\in\mathcal W^{\bm}(\gamma)$ such that $\supp
Z_{j\sigma\mu}\subset\mathcal O_{\varepsilon/2}(0)$,
$\dfrac{\partial^\beta}{\partial\tau_{j\sigma}^\beta} Z_{j\sigma\mu}(0)=0$, $\beta\le
2m-m_{j\sigma\mu}-2$, and the functions $Z_{j\sigma\mu}$ do not satisfy the consistency
condition~\eqref{eqConsistencyZ}. By Lemma~\ref{lUnonSmoothZnonConsist}, there exists a
function $U\in \mathcal H_\delta^{2m}(K)\subset\mathcal W^{2m}(K)$ such that $\supp
U\subset\mathcal O_{\varepsilon'}(0)$, $U\notin\mathcal W^{2m}(K)$, and $U$ satisfies
relations~\eqref{eqUnonSmoothZnonConsist}. Therefore, $\{{\mathbf P}_{j}(y,D_y)U_j\}\in
\mathcal W^0(K^\varepsilon)$, $\{{\mathbf B}_{j\sigma\mu}(y,D_y)U\}\in\mathcal
W^{\bm}(\gamma^\varepsilon)$, and the functions ${\mathbf B}_{j\sigma\mu}(y,D_y)U$ do not
satisfy the consistency condition~\eqref{eqConsistencyZ}.

2. Introduce a function $u(y)$ such that $u(y)=U_j(y'(y))$ for $y\in\mathcal
O_{\varepsilon'}(g_j)$ and $u(y)=0$ for $y\notin\mathcal O_{\varepsilon'}(\mathcal K)$,
where $y'\mapsto y(g_j)$ is the change of variables inverse to the change of variables
$y\mapsto y'(g_j)$ from Sec.~\ref{subsectStatement}. Since $\supp u\subset \mathcal
O_{\varkappa_1}(\mathcal K)$, it follows that $\mathbf B_{i\mu}^2u=0$. Therefore, $u(y)$
is the desired generalized solution of problem~\eqref{eqPinG}, \eqref{eqBinG}.
\endproof

\bigskip

Theorem~\ref{thUNonSmFNonConsist} shows that  if one wants that {\em any} generalized
solution of problem~\eqref{eqPinG}, \eqref{eqBinG} be smooth, then one must take
right-hand sides $\{f_0,f_{i\mu}\}$ from the space $L_2(G)\times \mathcal
S^{\bm}(\partial G)$.

\medskip

Let $v$ be an arbitrary function from the space $W^{2m}(G\setminus\overline{\mathcal
O_{\varkappa_1}(\mathcal K)})$. Consider the change of variables $y\mapsto y'(g_j)$ from
Sec.~\ref{subsectStatement}  and introduce the functions
\begin{equation}\label{eqBjsigmav}
B^v_{j\sigma\mu}(y')=(\mathbf B_{i\mu}^2v)(y(y')),\quad y'\in\gamma_{j\sigma}^\varepsilon
\end{equation}
(cf.~\eqref{eqytoy'}). We prove that the following condition is necessary and sufficient
for any generalized solution to be smooth.
\begin{condition}\label{condB2vB1CConsistency}
\begin{enumerate}
\item For any $v\in
W^{2m}(G\setminus\overline{\mathcal O_{\varkappa_1}(\mathcal K)})$, the functions
$B^v_{j\sigma\mu}$ satisfy the consistency condition~\eqref{eqConsistencyZ}.
\item For any polynomial vector $W$ of degree $2m-2$
the functions $\mathbf B_{j\sigma\mu}(y,D_y)W$ satisfy the consistency
condition~\eqref{eqConsistencyZ}.
\end{enumerate}
\end{condition}

Note that the validity of Condition~\ref{condB2vB1CConsistency}, unlike
Condition~\ref{condConsistencyPsi-BC}, does not depend on a generalized solution. It
depends only on the operators $\mathbf B_{i\mu}^1$ and $\mathbf B_{i\mu}^2$ and on the
geometry of the domain $G$ near the set (orbit) $\mathcal K$. This is quite natural
because we study the smoothness of {\em all} generalized solutions in this section (while
in Sec.~\ref{subsectuFixed}, we have investigated the smoothness of a fixed solution).

\begin{theorem}\label{thSmoothfne0}
Let Conditions~$\ref{condProperEigen}$
and~$\ref{condNoEigenProperLambda}$--$\ref{condNoEigenMonom}$ hold.
\begin{enumerate}
\item
If Condition~$\ref{condB2vB1CConsistency}$ holds and $u$ is a generalized solution of
problem~\eqref{eqPinG}, \eqref{eqBinG} with right-hand side $\{f_0,f_{i\mu}\}\in
L_2(G)\times \mathcal S^{\bm}(\partial G)$, then $u\in W^{2m}(G)$.
\item
If Condition~$\ref{condB2vB1CConsistency}$ fails, then there exists a right-hand side
$\{f_0,f_{i\mu}\}\in L_2(G)\times \mathcal S^{\bm}(\partial G)$ and a generalized
solution $u$  of problem~\eqref{eqPinG}, \eqref{eqBinG}  such that $u\notin W^{2m}(G)$.
\end{enumerate}
\end{theorem}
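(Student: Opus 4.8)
The plan is to treat the two items separately, using throughout the decomposition $U=W+U'$ from Lemma~\ref{lU=C+} and the criterion of Theorem~\ref{thuinW^2ProperEigen}: a generalized solution satisfies $u\in W^{2m}(G)$ if and only if $\{\Psi_{j\sigma\mu}-\bB_{j\sigma\mu}(y,D_y)W\}$ obeys the consistency condition~\eqref{eqConsistencyZ}. The observation that makes everything run is that \eqref{eqConsistencyZ} is \emph{linear} in $\{Z_{j\sigma\mu}\}$—it asserts finiteness of a weighted $L_2$-seminorm of a fixed linear combination of the $Z^0_{j\sigma\mu}$ and their derivatives—so the functions obeying it form a linear subspace of $\cW^{\bm}(\gamma^\varepsilon)$ which, by Remark~\ref{remSufficCons}, contains $\cH_0^{\bm}(\gamma^\varepsilon)$. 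I would also use repeatedly that \eqref{eqConsistencyZ} is a condition only at $r=0$, so it is unaffected by cutting off away from the vertices or by adding monomials of degree $\le 2m-m_{j\sigma\mu}-2$ (whose $(2m-m_{j\sigma\mu}-1)$-st derivative vanishes).

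For item~1 I would write, near $\cK$, the identity $\Psi_{j\sigma\mu}-\bB_{j\sigma\mu}(y,D_y)W=F_{j\sigma\mu}-B^u_{j\sigma\mu}-\bB_{j\sigma\mu}(y,D_y)W$ and check that each term obeys \eqref{eqConsistencyZ}: $F_{j\sigma\mu}$ does because $\{f_{i\mu}\}\in\cS^{\bm}(\pG)$; $B^u_{j\sigma\mu}$ does by part~1 of Condition~\ref{condB2vB1CConsistency} applied to $v=u$ (admissible since $u\in W^{2m}(G\setminus\overline{\cO_{\varkappa_1}(\cK)})$); and $\bB_{j\sigma\mu}(y,D_y)W$ does by part~2 applied to the degree-$(2m-2)$ polynomial $W$ of Lemma~\ref{lU=C+}. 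By linearity the left-hand side obeys \eqref{eqConsistencyZ}, i.e.\ Condition~\ref{condConsistencyPsi-BC} holds, and Theorem~\ref{thuinW^2ProperEigen} gives $u\in W^{2m}(G)$.

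For item~2, if Condition~\ref{condB2vB1CConsistency} fails then part~2 or part~1 fails, and in each case I would build a non-smooth $u$ with consistent data. The common tool is Lemma~\ref{lUnonSmoothZnonConsist}: from admissible data $\{Z_{j\sigma\mu}\}$ violating \eqref{eqConsistencyZ} it produces a non-smooth $U'\in\cH_\delta^{2m}(K)\subset\cW^{2m-1}(K)$, supported near the vertices, with $\{\bP_jU_j'\}\in\cW^0(K^\varepsilon)$ and $\bB_{j\sigma\mu}(y,D_y)U'\equiv Z_{j\sigma\mu}$ modulo $\cH_0^{\bm}(\gamma^\varepsilon)$, hence of the same consistency type as $Z$. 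If part~2 fails, I fix $W$ (degree $2m-2$) with $\{\bB_{j\sigma\mu}(y,D_y)W\}$ violating \eqref{eqConsistencyZ} and take $Z_{j\sigma\mu}$ equal, near $0$, to $-\bB_{j\sigma\mu}(y,D_y)W$ minus its degree-$(2m-m_{j\sigma\mu}-2)$ Taylor polynomial in $r$ (cut off to $\cO_{\varepsilon/2}(0)$); this keeps $Z$ inconsistent yet makes $\bB_{j\sigma\mu}(y,D_y)W+Z_{j\sigma\mu}$ agree near $0$ with a polynomial of degree $\le 2m-m_{j\sigma\mu}-2$, hence obey \eqref{eqConsistencyZ}. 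Setting $U=\eta W+U'$ locally ($\eta\equiv1$ near $0$) and extending $u$ by zero gives $\supp u\subset\cO_{\varepsilon'}(\cK)$, so $\bB^2_{i\mu}u=0$ by~\eqref{eqSeparK23'}; thus $F_{j\sigma\mu}=\bB_{j\sigma\mu}(y,D_y)(W+U')$ is consistent, $\{f_{i\mu}\}:=\{\bB^0_{i\mu}u+\bB^1_{i\mu}u\}\in\cS^{\bm}(\pG)$, $f_0:=\bP u\in L_2(G)$, while $U=\eta W+U'\notin\cW^{2m}$ (as $U'\notin\cW^{2m}$ and $\eta W$ is smooth) forces $u\notin W^{2m}(G)$. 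If instead part~1 fails, I fix $v\in W^{2m}(G\setminus\overline{\cO_{\varkappa_1}(\cK)})$ with $\{B^v_{j\sigma\mu}\}$ inconsistent, apply the same Taylor-subtraction to $-B^v_{j\sigma\mu}$ to obtain $U'$ (now with trivial polynomial part, so $W=0$), and define $u$ to equal $v$ on $G\setminus\overline{\cO_{\varkappa_1}(\cK)}$, to equal $U'$ near $\cK$, and to be any $W^{2m}$ interpolation in between; by the locality in~\eqref{eqSeparK23'}, $\bB^2_{i\mu}u$ sees only $v$, so $B^u_{j\sigma\mu}=B^v_{j\sigma\mu}$ and $F_{j\sigma\mu}=\bB_{j\sigma\mu}(y,D_y)U'+B^v_{j\sigma\mu}$ is consistent, whereas $\Psi_{j\sigma\mu}-\bB_{j\sigma\mu}(y,D_y)W=\bB_{j\sigma\mu}(y,D_y)U'$ is not, so $u\notin W^{2m}(G)$ by Theorem~\ref{thuinW^2ProperEigen}.

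The hard part will be item~2: one must manufacture a genuinely non-smooth local correction $U'$ whose image under the full, $y$-dependent nonlocal operators $\bB_{j\sigma\mu}$ cancels the prescribed inconsistency \emph{exactly}, while $U'$ itself stays out of $\cW^{2m}$. This is precisely what Lemma~\ref{lUnonSmoothZnonConsist} provides, but feeding it requires putting the inconsistent datum into admissible form (support in $\cO_{\varepsilon/2}(0)$ and vanishing low-order tangential derivatives at $0$) without destroying the inconsistency—achieved above by removing a Taylor polynomial that is invisible to~\eqref{eqConsistencyZ}. A further delicate point, in the part~1 case, is the global gluing of the exterior datum $v$ to the singular interior correction so that $\bB^2_{i\mu}u$ still reproduces $B^v_{j\sigma\mu}$; this rests entirely on the support/locality property~\eqref{eqSeparK23'} of the operators $\bB^2_{i\mu}$.
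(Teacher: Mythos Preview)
Your proposal is correct and follows essentially the same route as the paper: Part~1 is identical, and Part~2 rests on the same ingredients (Taylor-subtraction of the inconsistent datum to feed Lemma~\ref{lUnonSmoothZnonConsist}, then assembling $u$ from $U'$, the polynomial $W$, and $v$, and checking that the resulting $F_{j\sigma\mu}$ are consistent). The only difference is organizational: the paper handles both failure modes of Condition~\ref{condB2vB1CConsistency} with one uniform construction $u=u'+v$ (first extending $v$ to $W^{2m}(G)$ with $v\equiv0$ near $\cK$, then setting $u'=U'+\xi W$ locally) and reads off $u\notin W^{2m}(G)$ directly from $U'\notin\cW^{2m}$, whereas you split into two cases and, in the second, take the slight detour through Theorem~\ref{thuinW^2ProperEigen}.
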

\begin{proof} 1. {\em Sufficiency.} Let Condition~\ref{condB2vB1CConsistency} hold, and let $u$ be
an arbitrary generalized solution of problem~\eqref{eqPinG}, \eqref{eqBinG} with
right-hand side $\{f_0,f_{i\mu}\}\in L_2(G)\times \mathcal S^{\bm}(\partial G)$.
By~\eqref{eqSmoothOutsideK}, we have $u\in W^{2m}(G\setminus\overline{\mathcal
O_{\varkappa_1}(\mathcal K)})$. Therefore, by Condition~\ref{condB2vB1CConsistency}, the
functions $B^u_{j\sigma\mu}$ satisfy the consistency condition~\eqref{eqConsistencyZ}.
Let $W$ be a polynomial vector of degree $2m-2$ defined by Lemma~\ref{lU=C+}. Using
Condition~\ref{condB2vB1CConsistency} again, we see that the functions $\mathbf
B_{j\sigma\mu}(y,D_y)W$ satisfy the consistency condition~\eqref{eqConsistencyZ}. Since
$\{f_{i\mu}\}\in\mathcal S^{\bm}(\partial G)$, it follows that the functions
$F_{j\sigma\mu}$ satisfy the consistency condition~\eqref{eqConsistencyZ}. Therefore, the
functions $\Psi_{j\sigma\mu}=F_{j\sigma\mu}-B^u_{j\sigma\mu}$ and $\mathbf
B_{j\sigma\mu}(y,D_y)W$ satisfy Condition~\ref{condConsistencyPsi-BC}. Applying
Theorem~\ref{thuinW^2ProperEigen}, we obtain $u\in W^{2m}(G)$.

2. {\em Necessity.} Let Condition~\ref{condB2vB1CConsistency} fail. In this case, there
exist a function $v\in W^{2m}(G\setminus\overline{\mathcal O_{\varkappa_1}(\mathcal K)})$
and a polynomial vector $W=(W_1,\dots,W_N)$ of degree $2m-2$ such that the functions
$B^v_{j\sigma\mu}+\mathbf B_{j\sigma\mu}W$ do not satisfy the consistency
condition~\eqref{eqConsistencyZ} (one can assume that either $v=0,\,W\ne0$ or
$v\ne0,\,W=0$). Extend the function $v$ to the domain $G$ in such a way that $v(y)=0$ for
$y\in\mathcal O_{\varkappa_1/2}(\mathcal K)$ and $v\in W^{2m}(G)$.

By Lemma~4.11 in~\cite{KondrTMMO67}, there exist polynomials $F_{j\sigma\mu}'(r)$ of
degree $2m-m_{j\sigma\mu}-2$   such that
$$
\{B_{j\sigma\mu}^v+\bB_{j\sigma\mu}(y,D_y)W-F_{j\sigma\mu}'\}\in
\cH_\delta^{\bm}(\gamma^\varepsilon)\cap\cW^{\bm}(\gamma^\varepsilon),
$$
where $\delta>0$ is arbitrary. Hence,
$$
\dfrac{\partial^\beta}{\partial\tau_{j\sigma}^\beta}(B_{j\sigma\mu}^v+\bB_{j\sigma\mu}(y,D_y)W-F_{j\sigma\mu}')(0)=0,\qquad
\beta\le 2m-m_{j\sigma\mu}-2.
$$

 Since
$\dfrac{d^{2m-m_{j\sigma\mu}-1}}{dr^{2m-m_{j\sigma\mu}-1}}F_{j\sigma\mu}'(r)\equiv0$, it
follows that the functions $F_{j\sigma\mu}'$ satisfy the consistency
condition~\eqref{eqConsistencyZ}. Therefore, the functions
$B_{j\sigma\mu}^v+\bB_{j\sigma\mu}(y,D_y)W-F_{j\sigma\mu}'$ do not satisfy the
consistency condition~\eqref{eqConsistencyZ}.

By Lemma~\ref{lUnonSmoothZnonConsist}, there exists a function $U'\in\mathcal
H_\delta^{2m}(K)\subset\mathcal W^{2m-1}(K)$ such that $\supp U'\subset\mathcal
O_{\varepsilon'}(0)$, $U'\notin\mathcal W^{2m}(K^\varepsilon)$, and
\begin{equation}\label{eqPU'-PsiinH0}
\{\mathbf P_j(y,D_y)U'_j\}\in\mathcal W^0(K^\varepsilon),
\end{equation}
$$
\{\mathbf B_{j\sigma\mu}(y,D_y)U'-(F_{j\sigma\mu}'-B^v_{j\sigma\mu}-\mathbf
B_{j\sigma\mu}(y,D_y)W))\}\in\mathcal H_0^{\bm}(\gamma^\varepsilon).
$$
One can also write the latter relation as follows:
\begin{equation}\label{eqBU'-PsiinH0}
\{\mathbf B_{j\sigma\mu}(y,D_y)(U'+W)+B^v_{j\sigma\mu}-F_{j\sigma\mu}'\}\in\mathcal
H_0^{\bm}(\gamma^\varepsilon).
\end{equation}
Introduce a function $u'(y)$ such that $u'(y)=U'_j(y'(y))+\xi_j(y)W_j$ for $y\in\mathcal
O_{\varepsilon'}(g_j)$ and $u'(y)=0$ for $y\notin\mathcal O_{\varepsilon'}(\mathcal K)$,
where $y'\mapsto y(g_j)$ is the change of variables inverse to the change of variables
$y\mapsto y'(g_j)$ from Sec.~\ref{subsectStatement}, while $\xi_j\in
C_0^\infty(O_{\varepsilon'}(g_j))$, $\xi_j(y)=1$ for $y\in\mathcal
O_{\varepsilon'/2}(g_j)$, and $\varepsilon'$ is given by~\eqref{eqEpsilon'}. Let us prove
that the function $u=u'+v$ is the desired one. Clearly, $u\in W^{2m-1}(G)$, $u\notin
W^{2m}(G)$, and $u$ satisfies relations~\eqref{eqSmoothOutsideK}. It follows from the
belonging $v\in W^{2m}(G)$ and from relations~\eqref{eqPU'-PsiinH0} that
$$
\mathbf P(y,D_y)u\in L_2(G).
$$
Consider the functions $f_{i\mu}=\bB_{i\mu}^0u+\mathbf B_{i\mu}^1 u+\mathbf B_{i\mu}^2
u$. It follows from the belonging $v\in W^{2m}(G)$, from
relations~\eqref{eqSmoothOutsideK}, and from inequality~\eqref{eqSeparK23'} that
$f_{i\mu}\in W^{\mimu}\bigl(\Gamma_i\setminus\overline{\mathcal O_\delta(\mathcal
K)}\bigr)$ for any $\delta>0$. Consider the behavior of $f_{i\mu}$ near the set $\mathcal
K$. Note that $\mathbf B_{i\mu}^2 u'=0$ by~\eqref{eqSeparK23'}. Furthermore,
$\bB_{i\mu}^0 v+\mathbf B_{i\mu}^1 v=0$ for $y\in\mathcal O_{\varkappa_1/D_\chi
}(\mathcal K)$. Therefore,
\begin{equation}\label{eqfi}
 f_{i\mu}=\bB_{i\mu}^0 u'+\mathbf B_{i\mu}^1 u'+\mathbf B_{i\mu}^2 v\quad (y\in\mathcal
O_{\varkappa_1/D_\chi }(\mathcal K)).
\end{equation}
Introduce the functions $F_{j\sigma\mu}(y')=f_{i\mu}(y(y'))$, where $y\mapsto y'(g_j)$ is
the change of variables from Sec.~\ref{subsectStatement}. It follows from~\eqref{eqfi}
and from~\eqref{eqBU'-PsiinH0} that $\{F_{j\sigma\mu}-F_{j\sigma\mu}'\}\in\mathcal
H_0^{\bm}(\gamma^\varepsilon)$. Therefore, $\{F_{j\sigma\mu}\}\in\mathcal
W^{\bm}(\gamma^\varepsilon)$ and the functions $F_{j\sigma\mu}$, together with
$F_{j\sigma\mu}'$, satisfy the consistency condition~\eqref{eqConsistencyZ}. Hence
$\{f_{i\mu}\}\in\mathcal S^{\bm}(\partial G)$, which completes the proof.
\end{proof}

\subsection{Problem with Regular Nonlocal Conditions}

\begin{definition}\label{defAdmit}
We say that a function $v\in W^{2m}(G\setminus\overline{\mathcal O_{\varkappa_1}(\mathcal
K)})$ is {\em admissible} if there exists a polynomial vector $W=(W_1,\dots,W_N)$ of
degree $2m-2$ such that
\begin{equation}\label{eqvadmissible}
\dfrac{\partial^\beta}{\partial\tau_{j\sigma}^\beta}(B_{j\sigma\mu}^v+\mathbf
B_{j\sigma\mu}(y,D_y)W)(0)=0,\quad \beta\le 2m-m_{j\sigma\mu}-2.
\end{equation}
Any polynomial vector $W$ of degree $2m-2$  satisfying relations~\eqref{eqvadmissible} is
said to be an {\em admissible polynomial vector corresponding to the function~$v$.}
\end{definition}

Let $\tau_{gi}$ be the unit vector parallel to $\Gamma_i$ near the point
$g\in\overline{\Gamma_i}\cap\cK$.

\begin{definition}\label{defRerularRHS}
\begin{enumerate}
\item
The right-hand sides $f_{i\mu}$ in nonlocal conditions~\eqref{eqBinG} are said to be
\textit{regular} if $\{f_{i\mu}\}\in\mathcal W^{\bm}(\partial G)$ and
$$
\dfrac{\partial^\beta}{\partial\tau_{gi}^\beta} f_{i\mu}(g)=0,\qquad \beta\le
2m-m_{i\mu}-2,\ g\in\overline{\Gamma_i}\cap K.
$$
\item
The right-hand sides $\Psi_{j\sigma\mu}$ in nonlocal conditions~\eqref{eqBinK} are said
to be \textit{regular} if $\{\Psi_{j\sigma\mu}\}\in\mathcal W^{\bm}(\gamma^\varepsilon)$
and
$$
\dfrac{\partial^\beta}{\partial\tau_{j\sigma}^\beta} \Psi_{j\sigma\mu}(0)=0,\qquad
\beta\le 2m-m_{j\sigma\mu}-2.
$$
\end{enumerate}
If $m_{i\mu}=2m-1$ or $m_{j\sigma\mu}=2m-1$, then the corresponding relations are absent.
\end{definition}

In particular, the right-hand sides $\{f_{i\mu}\}\in\mathcal H_0^{\bm}(\partial G)$ and
$\{\Psi_{j\sigma\mu}\}\in\mathcal H_0^{\bm}(\gamma^\varepsilon)$ are regular due to
Sobolev's embedding theorem. In this subsection, we prove that the following condition
(which is weaker than Condition~\ref{condB2vB1CConsistency}) is necessary and sufficient
for any generalized solution of problem~\eqref{eqPinG}, \eqref{eqBinG} with regular
right-hand sides $\{f_{i\mu}\}\in\mathcal S^{\bm}(\partial G)$ to be smooth.

\begin{condition}\label{condBv+BCConsist}
For each admissible function $v$ and   each admissible polynomial vector $W$ {\rm (}of
degree $2m-2${\rm )} corresponding to $v$, the functions $B_{j\sigma\mu}^v+\mathbf
B_{j\sigma\mu}(y,D_y)W$ satisfy the consistency condition~\eqref{eqConsistencyZ}.
\end{condition}

\begin{theorem}\label{thSmoothf0}
Let Conditions~$\ref{condProperEigen}$
and~$\ref{condNoEigenProperLambda}$--$\ref{condNoEigenMonom}$ hold.
\begin{enumerate}
\item
If Condition~$\ref{condBv+BCConsist}$ holds and $u$  is a generalized solution of
problem~\eqref{eqPinG}, \eqref{eqBinG} with right-hand side $\{f_0,f_{i\mu}\}\in
L_2(G)\times\mathcal S^{\bm}(\partial G)$, where $f_{i\mu}$ are regular, then $u\in
W^{2m}(G)$.
\item
If Condition~$\ref{condBv+BCConsist}$ fails, then there exists a right-hand side
$\{f_0,f_{i\mu}\}\in L_2(G)\times\mathcal H_0^{\bm}(\partial G)$ and a generalized
solution $u$ of problem~\eqref{eqPinG}, \eqref{eqBinG} such that $u\notin W^{2m}(G)$.
\end{enumerate}
\end{theorem}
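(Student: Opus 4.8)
The plan is to reduce both statements to Theorem~\ref{thuinW^2ProperEigen} and Lemma~\ref{lUnonSmoothZnonConsist}, exactly as Theorem~\ref{thSmoothfne0} was reduced, the novelty being that the regularity hypotheses let one replace the ``individual'' Condition~\ref{condB2vB1CConsistency} by the ``combined'' Condition~\ref{condBv+BCConsist}. For the sufficiency I would take a generalized solution $u$ with regular $\{f_{i\mu}\}\in\cS^{\bm}(\pG)$ and apply Lemma~\ref{lU=C+} to write $U=W+U'$ with $U'\in\cH_\delta^{2m}(K^\varepsilon)$ ($0<\delta<1$) and $W$ a polynomial vector of degree $2m-2$. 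Since $0<\delta<1$, the space $\cH_\delta^{2m}(K^\varepsilon)$ embeds into $C^{2m-2}$ with all derivatives up to order $2m-2$ vanishing at the origin (cf.\ the necessity argument of Theorem~\ref{thuinW^2ProperEigen}), so $D^\alpha U'(0)=0$ for $|\alpha|\le 2m-2$. Feeding this into the chain rule for $\bB_{j\sigma\mu}(y,D_y)U'$ and counting orders (the operator has order $m_{j\sigma\mu}$ and every $\cG_{j\sigma ks}$ fixes the origin) shows that $\bB_{j\sigma\mu}(y,D_y)U'$ is regular, i.e.\ $\dfrac{\partial^\beta}{\partial\tau_{j\sigma}^\beta}(\bB_{j\sigma\mu}(y,D_y)U')(0)=0$ for $\beta\le 2m-m_{j\sigma\mu}-2$.

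Now $\bB_{j\sigma\mu}(y,D_y)U'=\Psi_{j\sigma\mu}-\bB_{j\sigma\mu}(y,D_y)W=F_{j\sigma\mu}-(B_{j\sigma\mu}^u+\bB_{j\sigma\mu}(y,D_y)W)$, and $F_{j\sigma\mu}$ is regular because $f_{i\mu}$ is; hence $B_{j\sigma\mu}^u+\bB_{j\sigma\mu}(y,D_y)W$ is regular, which by Definition~\ref{defAdmit} says precisely that $v=u|_{G\setminus\overline{\cO_{\varkappa_1}(\cK)}}$ is admissible and $W$ is an admissible polynomial vector corresponding to it. Then Condition~\ref{condBv+BCConsist} makes $B_{j\sigma\mu}^u+\bB_{j\sigma\mu}(y,D_y)W$ satisfy~\eqref{eqConsistencyZ}, while $\{f_{i\mu}\}\in\cS^{\bm}(\pG)$ makes $F_{j\sigma\mu}$ satisfy it; by linearity $\Psi_{j\sigma\mu}-\bB_{j\sigma\mu}(y,D_y)W$ satisfies~\eqref{eqConsistencyZ}, i.e.\ Condition~\ref{condConsistencyPsi-BC} holds, and Theorem~\ref{thuinW^2ProperEigen} yields $u\in W^{2m}(G)$.

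For the necessity I would imitate the construction in the proof of Theorem~\ref{thSmoothfne0}, exploiting admissibility to land in $\cH_0^{\bm}$ rather than merely in $\cS^{\bm}$. As Condition~\ref{condBv+BCConsist} fails, fix an admissible $v$ and a corresponding admissible $W$ for which $B_{j\sigma\mu}^v+\bB_{j\sigma\mu}(y,D_y)W$ violates~\eqref{eqConsistencyZ}. Multiplying by a cut-off $\eta$ that equals $1$ near the origin and is supported in $\cO_{\varepsilon/2}(0)$ changes neither the vanishing of the relevant derivatives at $0$ (guaranteed by admissibility) nor the divergence of the integral in~\eqref{eqConsistencyZ}, so $Z_{j\sigma\mu}:=-\eta\,(B_{j\sigma\mu}^v+\bB_{j\sigma\mu}(y,D_y)W)$ satisfies all hypotheses of Lemma~\ref{lUnonSmoothZnonConsist}. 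That lemma yields $U'\in\cH_\delta^{2m}(K)$, $U'\notin\cW^{2m}(K^\varepsilon)$, $\supp U'\subset\cO_{\varepsilon'}(0)$, with $\{\bB_{j\sigma\mu}(y,D_y)U'-Z_{j\sigma\mu}\}\in\cH_0^{\bm}(\gamma^\varepsilon)$; since the difference $(1-\eta)(B_{j\sigma\mu}^v+\bB_{j\sigma\mu}(y,D_y)W)$ is supported away from the origin, where the weighted and unweighted norms are equivalent and both summands lie in $\cW^{\bm}(\gamma^\varepsilon)$, one gets $\{\bB_{j\sigma\mu}(y,D_y)U'+B_{j\sigma\mu}^v+\bB_{j\sigma\mu}(y,D_y)W\}\in\cH_0^{\bm}(\gamma^\varepsilon)$. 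Setting $u=u'+v$, with $u'(y)=U_j'(y'(y))+\xi_j(y)W_j$ near $g_j$ and $v$ extended by zero near $\cK$, produces $u\in W^{2m-1}(G)\setminus W^{2m}(G)$ solving~\eqref{eqPinG}, \eqref{eqBinG} with $f_0=\bP(y,D_y)u\in L_2(G)$; near $\cK$ the boundary traces collapse to $F_{j\sigma\mu}=\bB_{j\sigma\mu}(y,D_y)U'+\bB_{j\sigma\mu}(y,D_y)W+B_{j\sigma\mu}^v\in\cH_0^{\bm}$, so $\{f_{i\mu}\}\in\cH_0^{\bm}(\pG)$, as required.

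The crux in both directions is the same conceptual point: one must verify that the regularity of $f_{i\mu}$, combined with the $\cH_\delta^{2m}$-remainder $U'$, forces the \emph{pair} $(v,W)$ to be admissible. This is exactly what permits invoking the weaker Condition~\ref{condBv+BCConsist}, which controls only the combined quantity $B_{j\sigma\mu}^v+\bB_{j\sigma\mu}(y,D_y)W$ on admissible pairs, in place of Condition~\ref{condB2vB1CConsistency}; and in the necessity part it is what lets the function $-(B_{j\sigma\mu}^v+\bB_{j\sigma\mu}(y,D_y)W)$ be used directly in Lemma~\ref{lUnonSmoothZnonConsist} without a polynomial correction of the type $F_{j\sigma\mu}'$ needed for Theorem~\ref{thSmoothfne0}, so that the resulting right-hand side lies in $\cH_0^{\bm}(\pG)$ (hence is regular and, by Remark~\ref{remSufficCons}, in $\cS^{\bm}(\pG)$).
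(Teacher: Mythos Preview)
Your argument is correct and follows the paper's approach in both directions: sufficiency via Lemma~\ref{lU=C+} and Theorem~\ref{thuinW^2ProperEigen}, necessity by rerunning the construction in Theorem~\ref{thSmoothfne0} with $F_{j\sigma\mu}'\equiv 0$ (your cut-off $\eta$ makes explicit what the paper leaves implicit when invoking Lemma~\ref{lUnonSmoothZnonConsist}).

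One caveat in the sufficiency step. You justify $D^\alpha U'(0)=0$ for $|\alpha|\le 2m-2$ by citing the necessity argument of Theorem~\ref{thuinW^2ProperEigen}, but that argument used the \emph{extra} hypothesis $U'\in\cW^{2m}(K^\varepsilon)$ (coming from $u\in W^{2m}(G)$) to get $U'\in C^{2m-2}$ via Sobolev; here $U'\in\cW^{2m}$ is precisely what you are trying to prove, so the citation is circular. The paper avoids this by arguing on the boundary trace rather than on $U'$ itself: from $U'\in\cH_\delta^{2m}(K^\varepsilon)$ one gets $\{\bB_{j\sigma\mu}(y,D_y)U'\}\in\cH_\delta^{\bm}(\gamma^\varepsilon)$, and from the identity $\bB_{j\sigma\mu}(y,D_y)U'=F_{j\sigma\mu}-(B_{j\sigma\mu}^u+\bB_{j\sigma\mu}(y,D_y)W)$ one gets membership in $\cW^{\bm}(\gamma^\varepsilon)$; the one-dimensional intersection $\cH_\delta^{\bm}\cap\cW^{\bm}$ then forces the tangential derivatives up to order $2m-m_{j\sigma\mu}-2$ to vanish at the origin. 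Your route via $D^\alpha U'(0)=0$ is in fact also valid (a dyadic scaling argument shows $\cH_\delta^{2m}(K^\varepsilon)\hookrightarrow C^{2m-2}$ with vanishing derivatives at $0$ for $0<\delta<1$), but that embedding is not what the cited passage establishes, so you should either supply the scaling argument or switch to the trace argument.
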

\begin{proof} 1. {\em Sufficiency.} Let Condition~\ref{condBv+BCConsist} hold, and let $u$ be an
arbitrary generalized solution of problem~\eqref{eqPinG}, \eqref{eqBinG} with right-hand
side $\{f_0,f_{i\mu}\}\in L_2(G)\times\mathcal S^{\bm}(\partial G)$, where $f_{i\mu}$ are
regular. By~\eqref{eqSmoothOutsideK}, we have $u\in W^{2m}(G\setminus\overline{\mathcal
O_{\varkappa_1}(\mathcal K)})$.

It follows from the properties of $f_{i\mu}$ that the right-hand sides in nonlocal
conditions~\eqref{eqBinK} have the form
\begin{equation}\label{eqPsi=B^u}
\Psi_{j\sigma\mu}=F_{j\sigma\mu}-B_{j\sigma\mu}^u,
\end{equation}
where $\{F_{j\sigma\mu}\}\in\mathcal W^{\bm}(\gamma^\varepsilon)$,
\begin{equation}\label{eqSmoothf0*}
\dfrac{\partial^\beta}{\partial\tau_{j\sigma}^\beta}F_{j\sigma\mu}(0)=0,\quad \beta\le
2m-m_{j\sigma\mu}-2,
\end{equation}
and $F_{j\sigma\mu}$ satisfy the consistency condition~\eqref{eqConsistencyZ}.

 Further, let $U=W+U'$, where
$U'\in\mathcal H_{\delta}^{2m}(K^\varepsilon)$ and $W$ are the function and the
polynomial vector (of degree $2m-2$) defined in Lemma~\ref{lU=C+}. It follows
from~\eqref{eqBinK} and~\eqref{eqPsi=B^u} that
$$
{\mathbf B}_{j\sigma\mu}(y,D_y)U'= F_{j\sigma\mu}-(B_{j\sigma\mu}^u+{\mathbf
B}_{j\sigma\mu}(y,D_y)W).
$$
Since $\{B_{j\sigma\mu}^u+{\mathbf B}_{j\sigma\mu}(y,D_y)W-F_{j\sigma\mu}\}\in\mathcal
W^{\bm}(\gamma^\varepsilon)$ and $U'\in \mathcal H_{\delta}^{2m}(K^\varepsilon)$, it
follows that
$$
\begin{aligned}
&\{B_{j\sigma\mu}^u+ {\mathbf B}_{j\sigma\mu}(y,D_y)W -F_{j\sigma\mu}\}\\
&\qquad=\{-{\mathbf B}_{j\sigma\mu}(y,D_y)U'\}\in\mathcal
H_\delta^{\bm}(\gamma^\varepsilon)\cap \mathcal W^{\bm}(\gamma^\varepsilon).
\end{aligned}
$$
It follows from this relation and from~\eqref{eqSmoothf0*} that
$$
\dfrac{\partial^\beta}{\partial\tau_{j\sigma}^\beta}(B_{j\sigma\mu}^u+ {\mathbf
B}_{j\sigma\mu}(y,D_y)W)(0)=0,\quad \beta\le 2m-m_{j\sigma\mu}-2,
$$
i.e.,  $u$ is an admissible function and $W$ is an admissible polynomial vector
corresponding to $u$. Hence, by virtue of~\eqref{eqPsi=B^u} and by
Condition~\ref{condBv+BCConsist}, Condition~\ref{condConsistencyPsi-BC} holds. Combining
this fact with Theorem~\ref{thuinW^2ProperEigen} implies $u\in W^{2m}(G)$.

2. {\em Necessity.} Let Condition~\ref{condBv+BCConsist} fail. In this case, there exists
a function $v\in W^{2m}(G\setminus\overline{\mathcal O_{\varkappa_1}(\mathcal K)})$ and a
polynomial vector $W=(W_1,\dots,W_N)$ of degree $2m-2$ such that
$$
\dfrac{\partial^\beta}{\partial\tau_{j\sigma}^\beta}(B_{j\sigma\mu}^u+ {\mathbf
B}_{j\sigma\mu}(y,D_y)W)(0)=0,\quad \beta\le 2m-m_{j\sigma\mu}-2,
$$
 and the
functions $B^v_{j\sigma\mu}+\mathbf B_{j\sigma\mu}(y,D_y)W$ do not satisfy the
consistency condition~\eqref{eqConsistencyZ}.

We must find a function  $u\in W^\ell(G)$ satisfying relations~\eqref{eqSmoothOutsideK}
such that $u\notin W^{2m}(G)$ and
$$
\mathbf P(y,D_y)u\in L_2(G),\qquad \{\bB_{i\mu}^0 u +\mathbf B_{i\mu}^1 u+\mathbf
B_{i\mu}^2 u\}\in \cH_0^{\bm}(\pG).
$$
To do this, one can repeat the proof of assertion~2 of Theorem~\ref{thSmoothfne0},
assuming that $v$ is the above function, $W$ is the above polynomial vector, and
$F_{j\sigma\mu}'(y)\equiv0$ (which is possible due to the relation
$B^v_{j\sigma\mu}+\mathbf
B_{j\sigma\mu}(y,D_y)W\in\cH_\delta^\bm(\gamma^\varepsilon)\cap\cW^\bm(\gamma^\varepsilon)$,
where $\delta>0$ is arbitrary).
\end{proof}

\section{Violation of Smoothness of Generalized Solutions}\label{sectImproperEigen}

\subsection{Violation of Conditions~\ref{condNoEigen1-2m} and~\ref{condProperEigen} or
Condition~\ref{condNoEigenProperLambda}}

The title of this subsectoin means that the following condition holds.

\begin{condition}\label{condImproperEigen}
The band $1-2m\le\Im\lambda<1-\ell$ contains an improper eigenvalue of the
operator~$\tilde{\mathcal L}(\lambda)$.
\end{condition}
We show that the smoothness of generalized solutions can be violated for any operators
$\mathbf B_{i\mu}^2$.

\begin{theorem}\label{thNonSmoothu}

Let Condition~$\ref{condImproperEigen}$ hold. Then there exists a right-hand side
$\{f_0,f_{i\mu}\}\in L_2(G)\times\cH_0^{\bm}(\pG)$ and a generalized solution $u$ of
problem~\eqref{eqPinG}, \eqref{eqBinG} such that $u\notin W^{2m}(G)$.
\end{theorem}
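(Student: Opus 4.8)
The plan is to manufacture a singular generalized solution directly from the improper eigenvalue, in the same spirit as the necessity parts of Theorems~\ref{thUNonSmFNonConsist} and~\ref{thSmoothfne0}, but driving the singularity by the spectrum of $\tilde\cL(\lambda)$ rather than by a violated consistency condition. First I would fix an improper eigenvalue $\lambda_0$ of $\tilde\cL(\lambda)$ with $1-2m\le\Im\lambda_0<1-\ell$ (such $\lambda_0$ exists by Condition~\ref{condImproperEigen}, and there are only finitely many eigenvalues in this band by Lemma~\ref{lSpectrum}). Let $\varphi_0,\dots,\varphi_p\in\prod_j W^{2m}(-\omega_j,\omega_j)$ be a Jordan chain of $\tilde\cL(\lambda)$ at $\lambda_0$ ($\varphi_0$ the eigenvector, $p\ge0$), and form the homogeneous model solution
$$
V(y)=r^{i\lambda_0}\sum_{q=0}^{p}\frac{(i\ln r)^q}{q!}\,\varphi_{p-q}(\omega),
$$
so that $\cP_j(D_y)V_j=0$ in $K_j$ and $\cB_{j\sigma\mu}(D_y)V=0$ on $\gamma_{j\sigma}$. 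By the definition of an improper eigenvalue (Definition~\ref{defRegEigVal}), the vector $V$ is \emph{not} a polynomial: either $p\ge1$, so a genuine logarithmic factor is present, or $p=0$ and $r^{i\lambda_0}\varphi_{0,j}(\omega)$ fails to be a homogeneous polynomial.

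The regularity of $V$ near the origin is governed by the exponent $\Re(i\lambda_0)=-\Im\lambda_0$. Since $\Im\lambda_0<1-\ell$, the strict inequality $-\Im\lambda_0>\ell-1$ holds, whence $V\in\cW^\ell(K^\varepsilon)$; moreover one may fix $\delta>0$ small so that $V\in\cH_\delta^{2m}(K^\varepsilon)\subset\cW^{2m-1}(K^\varepsilon)$. On the other hand, since $\Im\lambda_0\ge1-2m$ we have $-\Im\lambda_0\le 2m-1$, so $V$ is, up to logarithmic factors, homogeneous of degree at most $2m-1$. Were $V\in\cW^{2m}(K^\varepsilon)$, then $V$ would have to be a polynomial vector of degree $\le 2m-1$ — this is exactly the place where improperness is used — contradicting the previous sentence; hence $V\notin\cW^{2m}(K^\varepsilon)$.

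Next I cut off and transport the singularity into $G$. Choose $\xi\in C_0^\infty(\cO_{\varepsilon'}(0))$ with $\xi\equiv1$ near the origin ($\varepsilon'$ as in~\eqref{eqEpsilon'}) and set $U=\xi V$. Because $\xi\equiv1$ near $0$ and the model operators annihilate $V$, the functions $\cP_j(D_y)U_j$ and $\cB_{j\sigma\mu}(D_y)U$ are smooth and supported away from the origin, hence lie in $\cH_0^0(K^\varepsilon)$ and $\cH_0^\bm(\gamma^\varepsilon)$, respectively. Passing from the model operators $\cP_j,\cB_{j\sigma\mu}$ to the full operators $\bP_j(y,D_y),\bB_{j\sigma\mu}(y,D_y)$ produces remainder terms whose coefficients either vanish at the origin (principal parts) or are of lower order; applied to $V\in\cH_\delta^{2m}$ these raise the power of $r$ and therefore also land in $\cH_0^0(K^\varepsilon)$ and $\cH_0^\bm(\gamma^\varepsilon)$, by the Taylor-expansion argument already used in Lemmas~\ref{lU=C+m2m} and~\ref{lU=C+}. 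Finally define $u$ on $G$ by $u(y)=U_j(y'(y))$ for $y\in\cO_{\varepsilon'}(g_j)$ and $u=0$ otherwise, using the change of variables of Sec.~\ref{subsectStatement} (as in the proof of Theorem~\ref{thUNonSmFNonConsist}). Since $\supp u\subset\cO_{\varkappa_1}(\cK)$ we have $\bB_{i\mu}^2u=0$ by~\eqref{eqSeparK23'}, so that $f_0:=\bP(y,D_y)u\in L_2(G)$ and $f_{i\mu}:=\bB_{i\mu}^0u+\bB_{i\mu}^1u\in\cH_0^\bm(\pG)$ by the estimates just described. The function $u$ is smooth away from $\cK$ (as $V$ is $C^\infty$ for $r>0$) and lies in $W^\ell(G)$, so it satisfies~\eqref{eqSmoothOutsideK} and is a generalized solution; but $u\notin W^{2m}(G)$ because $U\notin\cW^{2m}(K^\varepsilon)$.

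The main obstacle is the clean separation carried out in the third paragraph: verifying that replacing the model operators by the genuine variable-coefficient operators $\bP_j,\bB_{j\sigma\mu}$ does not destroy membership of the right-hand sides in the weighted spaces $\cH_0^0$ and $\cH_0^\bm$. This is a perturbation estimate of Kondrat'ev type, relying on $V\in\cH_\delta^{2m}$ with $\delta>0$ and on the fact that the difference operators gain regularity; it is routine but must be done with care so as to keep the boundary data in $\cH_0^\bm(\pG)$ rather than merely in $\cW^\bm(\pG)$. The conceptually essential step, however, is the implication ``homogeneous (modulo logarithms) of degree $\le 2m-1$ and in $W^{2m}$ $\Rightarrow$ polynomial,'' which converts the algebraic fact that $\lambda_0$ is improper into the analytic failure $u\notin W^{2m}(G)$.
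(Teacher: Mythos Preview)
Your overall strategy is the paper's, and the key implication ``non-polynomial of homogeneity $\le 2m-1$ $\Rightarrow$ $\notin W^{2m}$'' via Lemma~\ref{lAppL4.20Kondr} is correctly identified. But the third paragraph contains a genuine gap: the claim that the remainders $\bP_j U_j$ and $\bB_{j\sigma\mu}U$ land directly in $\cH_0^0(K^\varepsilon)$ and $\cH_0^\bm(\gamma^\varepsilon)$ is \emph{false} unless $1-2m\le\Im\lambda_0<2-2m$. The difference $\bP_j-\cP_j$ gains only one power of $r$ (principal-part coefficients vanishing at $0$, or one order lower); applied to $V$ the leading remainder behaves like $r^{-\Im\lambda_0-2m+1}$, and this lies in $L_2$ only when $\Im\lambda_0<2-2m$. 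For $\ell\le 2m-2$ the band $1-2m\le\Im\lambda_0<1-\ell$ may contain improper eigenvalues with $\Im\lambda_0\ge 2-2m$, and then your argument breaks. The appeal to Lemmas~\ref{lU=C+m2m}--\ref{lU=C+} is misleading: there the Taylor expansion is applied to \emph{polynomials} $Q,W^s$, whose derivatives are smooth; here $V$ is genuinely singular. The same issue invalidates the side-claim $V\in\cH_\delta^{2m}\subset\cW^{2m-1}$ for small $\delta>0$: membership $V\in\cH_\delta^{2m}$ forces $\delta>\Im\lambda_0+2m-1$, which can exceed~$1$.

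The paper repairs this by Taylor-expanding to an order $k_0$ chosen via~\eqref{eqk_0} so that the Taylor \emph{remainder} lies in $\cH_0$, isolating the finitely many too-singular intermediate terms $P_j,P_{j\sigma\mu}$ in~\eqref{eq5.2'}, and then invoking Lemma~\ref{lAppL4.3GurPetr03} to build a correction $V'$ of the form~\eqref{eq5.2''} that absorbs exactly these terms (see~\eqref{eq5.2'''}). One takes $U=\xi(V-V')$ rather than $\xi V$; since every term of $V'$ carries a strictly higher power of $r$ than $V$, one still has $V'\in\cW^\ell$ and $V-V'\notin\cW^{2m}$, while now~\eqref{eq5.2''''} holds. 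This correction step---not a routine Kondrat'ev perturbation estimate---is the substantive ingredient missing from your sketch.
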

\begin{proof} 1. Let $\lambda=\lambda_0$ be an improper eigenvalue of the operator $\tilde{\mathcal
L}(\lambda)$, $1-2m\le\Im\lambda_0<1-\ell$. Consider the function
\begin{equation}\label{eq5.1'}
 V=r^{i\lambda_0}\sum\limits_{l=0}^{l_0}\frac{1}{l!}(i\ln
 r)^l\varphi^{(l_0-l)}(\omega)\in \cW^\ell(K^d)\quad \forall d>0,
\end{equation}
where $\varphi^{(0)}, \dots, \varphi^{(\varkappa-1)}$ are an eigenvector and associated
vectors (a Jordan chain of length $\varkappa\ge 1$) of the operator $\tilde{\mathcal
L}(\lambda)$ corresponding to the eigenvalue $\lambda_0$. The number $l_0$ ($0\le
l_0\le\varkappa-1$) occurring in the definition of $V$ is such that the function $V$ is
not a polynomial vector in $y_1, y_2$. Such a number $l_0$ does exist because $\lambda_0$
is not a proper eigenvalue (if $\Im\lambda$ is a noninteger  or $\Im\lambda$ is an
integer but $\Re\lambda\ne0$, then we can take $l_0=0$).

Since $V$ is not a polynomial vector, it follows from Lemma~\ref{lAppL4.20Kondr} that
\begin{equation}\label{eq5.5}
V\notin\cW^{2m}(K^d)\quad \forall d>0.
\end{equation}
It follows from Lemma~\ref{lAppL2.1GurPetr03} that
\begin{equation}\label{eq5.2}
 {\mathcal P}_j(D_y)V_j=0,\qquad {\mathcal B}_{j\sigma\mu}(D_y)V|_{\gamma_{j\sigma}}=0.
\end{equation}

Using~\eqref{eq5.2} and the Taylor expansion for the coefficients of $\mathbf P_j(y,D_y)$
and $\mathbf B_{j\sigma\mu}(y,D_y)$, we have
\begin{equation}\label{eq5.2'}
\{{\mathbf P}_j(y,D_y)V_j-P_j\}\in\mathcal W^0(K^\varepsilon),\quad \{{\mathbf
B}_{j\sigma\mu}(y,D_y)V-P_{j\sigma\mu}\}\in\mathcal H_0^{\bm}(\gamma^\varepsilon),
\end{equation}
where $P_j$ is a linear combination of terms of the  kind
$$
r^{i\lambda_0-2m+1}(i\ln r)^l\varphi(\omega),\dots, r^{i\lambda_0-2m+k_0}(i\ln
r)^l\varphi(\omega),
$$
 $P_{j\sigma\mu}$ is a linear combination of terms of the kind
$$
r^{i\lambda_0-m_{j\sigma\mu}+1}(i\ln r)^l,\dots, r^{i\lambda_0-m_{j\sigma\mu}+k_0}(i\ln
r)^l,
$$
$\varphi(\omega)$ are infinitely smooth vector-valued functions, and $k_0\in\bbN$ is such
that
\begin{equation}\label{eqk_0}
-\Im\lambda_0-2m+k_0\le -1,\quad -\Im\lambda_0-2m+k_0+1> -1.
\end{equation}
Clearly, one can set $P_j=0$ and $P_{j\sigma\mu}=0$ if inequalities~\eqref{eqk_0} are
true for $k_0=0$, i.e., if $1-2m\le\Im\lambda_0<2-2m$.

Using Lemma~\ref{lAppL4.3GurPetr03}, we can construct the function
\begin{equation}\label{eq5.2''}
V'=\sum\limits_{k=1}^{k_0}\sum\limits_{l=0}^{l'} r^{i\lambda_0+k}(i\ln
r)^{l_k}\varphi_{kl}(\omega)\in W^\ell(K^d)\quad \forall d>0
\end{equation}
such that
\begin{equation}\label{eq5.2'''}
\{{\mathbf P}_j(y,D_y)V_j'-P_j\}\in\mathcal W^0(K^\varepsilon),\qquad \{{\mathbf
B}_{j\sigma\mu}(y,D_y)V'-P_{j\sigma\mu}\}\in\mathcal H_0^{\bm}(\gamma^\varepsilon).
\end{equation}

Consider a cut-off function $\xi\in C_0^\infty(\cO_{\varepsilon'}(0))$ equal to one near
the origin, where $\varepsilon'$ is given by~\eqref{eqEpsilon'}. Set $U=\xi (V-V')$.
Clearly, $\supp U\subset\cO_{\varepsilon'}(0)$; hence,
\begin{equation}\label{eqNonSmoothu3'}
\supp{\mathbf B}_{j\sigma\mu}(y,D_y)U\subset\overline{\gamma_{j\sigma}}\cap\mathcal
O_{\varkappa_2}(0).
\end{equation}

 It follows from~\eqref{eq5.1'}, \eqref{eq5.2''},
and~\eqref{eq5.5} that
\begin{equation}\label{eq5.5*}
U\in \cW^\ell(K), \qquad U\notin\cW^{2m}(K^d)\quad\forall d>0.
\end{equation}
Moreover, by virtue of~\eqref{eq5.2'} and~\eqref{eq5.2'''}, we have
\begin{equation}\label{eq5.2''''}
\{{\mathbf P}_j(y,D_y)U_j\}\in\mathcal W^0(K^\varepsilon),\qquad \{{\mathbf
B}_{j\sigma\mu}(y,D_y)U\}\in\mathcal H_0^{\bm}(\gamma^\varepsilon).
\end{equation}

2. Consider the function $u(y)$ given by $u(y)=U_j(y'(y))$ for $y\in\mathcal
O_{\varepsilon'}(g_j)$ and $u(y)=0$ for $y\notin\mathcal O_{\varepsilon'}(\mathcal K)$,
where $y'\mapsto y(g_j)$ is the change of variables inverse to the change of variables
$y\mapsto y'(g_j)$ from Sec.~\ref{subsectStatement}. The function $u$ is the desired one.
Indeed, $u\notin W^{2m}(G)$ due to~\eqref{eq5.5*}. Furthermore, $\mathbf B_{i\mu}^2u=0$
due to inequality~\eqref{eqSeparK23'} because $\supp u\subset \mathcal
O_{\varkappa_1}(\mathcal K)$. It follows from the equality $\mathbf B_{i\mu}^2u=0$ and
from relations~\eqref{eq5.2''''} that the function $u$ satisfies the following relations:
\begin{equation}\label{eqNonSmoothu1}
\begin{gathered}
\mathbf P(y,D_y)u\in L_2(G),\qquad \mathbf B_{i\mu}^0 u+\mathbf B_{i\mu}^1
u+\mathbf B_{i\mu}^2 u\in H_0^{\mimu}(\Gamma_i),\\
\supp(\mathbf B_{i\mu}^0 u+\mathbf B_{i\mu}^1u+\mathbf
B_{i\mu}^2u)\subset\overline{\Gamma_i}\cap\mathcal O_{\varkappa_2}(\mathcal K).
\end{gathered}
\end{equation}
\end{proof}

\subsection{Violation of Condition~\ref{condEigenMonom}
or~\ref{condNoEigenMonom}}\label{subsecNonSmoothuNotPolyn}

If $\ell=2m-1$, then all the possibilities for the location of eigenvalues of
$\tilde\cL(\lambda)$ have been investigated. It remains to assume that $\ell\le 2m-2$ and
Condition~\ref{condEigenMonom} or~\ref{condNoEigenMonom} fails.

\begin{theorem}\label{thNonSmoothuNotPolyn}
Suppose that Condition~$\ref{condNoEigenProperLambda}$ holds while
Condition~$\ref{condEigenMonom}$ or~$\ref{condNoEigenMonom}$ fails. Then there is a
right-hand side $\{f_0,f_{i\mu}^1+f_{i\mu}^2\}\in L_2(G)\times\cW^{\bm}(\pG)$ and a
generalized solution $u$ of problem~\eqref{eqPinG}, \eqref{eqBinG} such that $u\notin
W^{2m}(G)$, where $f_{i\mu}^1$ is a polynomial of degree $ 2m-m_{i\mu}-2$ in a
neighborhood of the point $g\in\overline{\Gamma_i}\cap\cK$ and $\{f_{i\mu}^2\}\in
\cH_0^{\bm}(\pG)$.
\end{theorem}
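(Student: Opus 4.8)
The plan is to follow the scheme of the proof of Theorem~\ref{thNonSmoothu}: I would construct a generalized solution $u$ supported in a small neighborhood of the orbit $\cK$ (so that $\mathbf B_{i\mu}^2u=0$ by~\eqref{eqSeparK23'}) whose local representatives $U_j$ carry a genuine non-polynomial singularity at the conjugation point, while the corresponding right-hand sides have the required structure. The source of the singularity is now Remark~\ref{remMonomialSolution}(2): since Condition~\ref{condEigenMonom} or~\ref{condNoEigenMonom} fails and Condition~\ref{condNoEigenProperLambda} holds, there is an integer $s\in\{\ell,\dots,2m-2\}$ and a vector $\{c_{j\sigma\mu}\}\in C_s$ for which problem~\eqref{eqUinW^2_4'} admits a solution $V$ of the form~\eqref{eqUinW^2_4''} that is \emph{not} a polynomial in $y_1,y_2$. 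This $V$ is homogeneous of degree $s$ up to logarithmic factors, so $V\in\cW^\ell(K^d)$ for all $d>0$ because $s\ge\ell$; and, since $V$ is not a polynomial, Lemma~\ref{lAppL4.20Kondr} gives $V\notin\cW^{2m}(K^d)$. Note that $\{c_{j\sigma\mu}\}\in C_s$ forces $c_{j\sigma\mu}=0$ whenever $m_{j\sigma\mu}\ge s+1$, so for every nonzero $c_{j\sigma\mu}$ the boundary datum $c_{j\sigma\mu}r^{s-m_{j\sigma\mu}}$ is a monomial of nonnegative degree $s-m_{j\sigma\mu}\le 2m-m_{j\sigma\mu}-2$; restricted to the ray $\gamma_{j\sigma}$ this is precisely a polynomial of degree $\le 2m-m_{j\sigma\mu}-2$, which will become the polynomial part $f_{i\mu}^1$ of the right-hand side.

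Next I would pass from the model operators $\cP_j(D_y)$, $\cB_{j\sigma\mu}(D_y)$ to the full operators $\mathbf P_j(y,D_y)$, $\mathbf B_{j\sigma\mu}(y,D_y)$ by Taylor-expanding their coefficients at the origin. Since $\cP_j(D_y)V_j=0$ and $\cB_{j\sigma\mu}(D_y)V=c_{j\sigma\mu}r^{s-m_{j\sigma\mu}}$, the leading terms cancel and only strictly less singular remainders survive, giving
\begin{equation*}
\{\mathbf P_j(y,D_y)V_j-P_j\}\in\cW^0(K^\varepsilon),\qquad
\{\mathbf B_{j\sigma\mu}(y,D_y)V-c_{j\sigma\mu}r^{s-m_{j\sigma\mu}}-P_{j\sigma\mu}\}\in\cH_0^{\bm}(\gamma^\varepsilon),
\end{equation*}
where $P_j$ and $P_{j\sigma\mu}$ are finite linear combinations of terms $r^{s-2m+k}(i\ln r)^l\varphi(\omega)$ and $r^{s-m_{j\sigma\mu}+k}(i\ln r)^l$ with $1\le k\le k_0:=2m-1-s$ (exactly those orders not yet in $\cW^0$, resp.\ $\cH_0^{\bm}$). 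Using Lemma~\ref{lAppL4.3GurPetr03} I would then construct a correction
\begin{equation*}
V'=\sum_{k=1}^{k_0}\sum_{l} r^{s+k}(i\ln r)^{l}\varphi_{kl}(\omega)\in\cW^\ell(K^d)\quad\forall d>0,
\end{equation*}
built out of powers $r^{s+k}$ with $k\ge1$ only, so that $\{\mathbf P_j(y,D_y)V_j'-P_j\}\in\cW^0(K^\varepsilon)$ and $\{\mathbf B_{j\sigma\mu}(y,D_y)V'-P_{j\sigma\mu}\}\in\cH_0^{\bm}(\gamma^\varepsilon)$. Because $V'$ contains only homogeneity degrees strictly larger than $s$, it cannot cancel the degree-$s$ non-polynomial part of $V$; hence $V-V'$ is still not a polynomial near the origin.

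Then I would cut off and transfer to $G$. With a cut-off function $\xi\in C_0^\infty(\cO_{\varepsilon'}(0))$, $\xi\equiv1$ near the origin, and $\varepsilon'$ as in~\eqref{eqEpsilon'}, set $U=\xi(V-V')$. By Leibniz' formula and the two displayed relations, the commutators with $\partial\xi$ are smooth and supported away from the origin, so
\begin{equation*}
\{\mathbf P_j(y,D_y)U_j\}\in\cW^0(K^\varepsilon),\qquad
\{\mathbf B_{j\sigma\mu}(y,D_y)U-c_{j\sigma\mu}r^{s-m_{j\sigma\mu}}\}\in\cH_0^{\bm}(\gamma^\varepsilon),
\end{equation*}
while $U\in\cW^\ell(K)$ and $U\notin\cW^{2m}(K^d)$ for all $d>0$ by Lemma~\ref{lAppL4.20Kondr}. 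Defining $u(y)=U_j(y'(y))$ on $\cO_{\varepsilon'}(g_j)$ and $u=0$ off $\cO_{\varepsilon'}(\cK)$, the support condition $\supp u\subset\cO_{\varkappa_1}(\cK)$ yields $\mathbf B_{i\mu}^2u=0$ by~\eqref{eqSeparK23'}. Consequently $u$ satisfies~\eqref{eqSmoothOutsideK}, $f_0:=\mathbf P(y,D_y)u\in L_2(G)$, and the boundary right-hand side $f_{i\mu}=\mathbf B_{i\mu}^0u+\mathbf B_{i\mu}^1u$ splits, in the local coordinates near each $g_j$, as the polynomial $f_{i\mu}^1$ corresponding to $c_{j\sigma\mu}r^{s-m_{j\sigma\mu}}$ plus a remainder $f_{i\mu}^2\in\cH_0^{\bm}$ (the transition region of $\xi$ together with the $\cH_0^{\bm}$ terms above being absorbed into $f_{i\mu}^2$). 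Finally $u\notin W^{2m}(G)$, precisely because $U\notin\cW^{2m}(K^d)$.

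The main obstacle is the construction of the correction $V'$ together with the verification that it leaves the degree-$s$ obstruction intact. Here one must check that Lemma~\ref{lAppL4.3GurPetr03} produces $V'$ entirely from homogeneity degrees $s+1,\dots,s+k_0$ (with at most logarithmic factors), so that $U=\xi(V-V')$ retains a non-polynomial homogeneous part of degree $s$ and Lemma~\ref{lAppL4.20Kondr} still applies; one must also confirm that the inhomogeneous boundary datum $c_{j\sigma\mu}r^{s-m_{j\sigma\mu}}$, whose nonzero components have degree $\le 2m-m_{i\mu}-2$ thanks to $\{c_{j\sigma\mu}\}\in C_s$, indeed yields a legitimate polynomial right-hand side $f_{i\mu}^1$ of the stated degree, with the rest lying in $\cH_0^{\bm}(\pG)$.
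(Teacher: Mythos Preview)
Your proposal is correct and follows essentially the same route as the paper's own proof: both invoke Remark~\ref{remMonomialSolution}(2) to produce the non-polynomial $V$ of the form~\eqref{eqUinW^2_4''}, Taylor-expand the coefficients to isolate the remainders $P_j,P_{j\sigma\mu}$, construct the correction $V'$ via Lemma~\ref{lAppL4.3GurPetr03} at homogeneity degrees $s+1,\dots,s+k_0$, cut off with $\xi$ and transfer to $G$ exactly as in the proof of Theorem~\ref{thNonSmoothu}. Your explicit remark that $V'$ involves only degrees strictly larger than $s$, so the degree-$s$ non-polynomial part of $V$ survives in $U$, is precisely the point behind the paper's relation~\eqref{eqNonSmoothuNotPolyn4}, and your observation that $\{c_{j\sigma\mu}\}\in C_s$ forces the nonzero boundary data to be monomials of degree $\le 2m-m_{j\sigma\mu}-2$ matches the paper's concluding footnote.
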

\begin{proof} 1. Due to part 2 of Remark~\ref{remMonomialSolution}, there is a function $V$ given
by~\eqref{eqUinW^2_4''} such that
\begin{equation}\label{eqNonSmoothuNotPolyn1}
 V\in \cW^\ell(K^d), \quad V\notin \cW^{2m}(K^d)\quad \forall d>0,
\end{equation}
\begin{equation}\label{eqNonSmoothuNotPolyn2}
 {\mathcal P}_j(D_y)V_j=0,\qquad {\mathcal B}_{j\sigma\mu}(D_y)V|_{\gamma_{j\sigma}}=c_{j\sigma\mu}
 r^{s-m_{j\sigma\mu}}
\end{equation}
for some $s\in\{\ell,\dots,2m-2\}$ and some (nontrivial) numerical vector
$\{c_{j\sigma\mu}\}\in C_s$.

Using~\eqref{eqNonSmoothuNotPolyn2} and the   Taylor expansion for the coefficients of
$\mathbf P_j(y,D_y)$ and $\mathbf B_{j\sigma\mu}(y,D_y)$, we have
\begin{equation}\label{eqNonSmoothuNotPolyn3}
\begin{aligned}
\{{\mathbf P}_j(y,D_y)V_j-P_j\}&\in\mathcal W^0(K^\varepsilon),\\
 \{{\mathbf
B}_{j\sigma\mu}(y,D_y)V-c_{j\sigma\mu}
 r^{s-m_{j\sigma\mu}}-P_{j\sigma\mu}\}&\in\mathcal
H_0^{\bm}(\gamma^\varepsilon),
\end{aligned}
\end{equation}
where the functions $P_j$ and $P_{j\sigma\mu}$ are of the same form as in~\eqref{eq5.2'}.

As in the proof of Theorem~\ref{thNonSmoothu}, we can construct a function $V'$ of the
form~\eqref{eq5.2''} (with $i\lambda_0$ replaced by $s$) satisfying
relations~\eqref{eq5.2'''}.

Consider a cut-off function $\xi\in C_0^\infty(\cO_{\varepsilon'}(0))$ equal to one near
the origin, where $\varepsilon'$ is given by~\eqref{eqEpsilon'}. Set $U=\xi (V-V')$.
Clearly, $\supp U\subset\mathcal O_{\varepsilon'}(0)$ and
\begin{equation}\label{eqNonSmoothuNotPolyn4}
U\in \cW^\ell(K), \qquad U\notin\cW^{2m}(K^d)\quad\forall d>0.
\end{equation}
Moreover, by virtue of~\eqref{eqNonSmoothuNotPolyn3} and~\eqref{eq5.2'''}, we have
\begin{equation}\label{eqNonSmoothuNotPolyn5}
\{{\mathbf P}_j(y,D_y)U_j\}\in\mathcal W^0(K^\varepsilon),\quad \{{\mathbf
B}_{j\sigma\mu}(y,D_y)U-c_{j\sigma\mu}r^{s-m_{j\sigma\mu}}\}\in\mathcal
H_0^{\bm}(\gamma^\varepsilon).
\end{equation}

We note that, since $\{c_{j\sigma\mu}\}\in C_s$, the function
$c_{j\sigma\mu}r^{s-m_{j\sigma\mu}}$ either equals zero (which, in particular, holds for
$(j,\sigma,\mu)\in J_s$) or is a monomial of degree $s-m_{j\sigma\mu}$ (i.e., no greater
than $2m-m_{j\sigma\mu}-2$).

2. Consider the function $u(y)$ given by $u(y)=U_j(y'(y))$ for $y\in\mathcal
O_{\varepsilon'}(g_j)$ and $u(y)=0$ for $y\notin\mathcal O_{\varepsilon'}(\mathcal K)$,
where $y'\mapsto y(g_j)$ is the change of variables inverse to the change of variables
$y\mapsto y'(g_j)$ from Sec.~\ref{subsectStatement}. The function $u$ is the desired one.
Indeed, $u\notin W^{2m}(G)$ due to~\eqref{eqNonSmoothuNotPolyn4}. Furthermore, $\mathbf
B_{i\mu}^2u=0$ due to inequality~\eqref{eqSeparK23'} because $\supp u\subset \mathcal
O_{\varkappa_1}(\mathcal K)$. It follows from the equality $\mathbf B_{i\mu}^2u=0$ and
from relations~\eqref{eqNonSmoothuNotPolyn5} that the function $u$ satisfies the
following relations:
\begin{equation*}
\mathbf P(y,D_y)u\in L_2(G),\qquad \mathbf B_{i\mu}^0 u+\mathbf B_{i\mu}^1 u+\mathbf
B_{i\mu}^2 u=f_{i\mu}^1+f_{i\mu}^2,
\end{equation*}
where $f_{i\mu}^1$ is a polynomial\footnote{The function $f_{i\mu}^1$ (being written in
the system of coordinates originated at the point $g\in\overline{\Gamma_i}\cap\cK$)
either equals zero or is a monomial of degree  $s-m_{j\sigma\mu}$.} of degree no greater
than $ 2m-m_{i\mu}-2$ in a neighborhood of the point $g\in\overline{\Gamma_i}\cap\cK$ and
$f_{i\mu}^2\in H_0^{\mimu}(\Gamma_i)$.
\end{proof}

\begin{remark}
We remind that the space $\mathcal S^{\bm}(\partial G)$ was introduced in
Sec.~\ref{subsectNonHomogProb} in the case where the line $\Im\lambda=1-2m$ contains only
the proper eigenvalue $i(1-2m)$. In this case, it was proved in
Theorem~\ref{thUNonSmFNonConsist} that the smoothness of generalized solutions may
violate if the right-hand side $\{f_{i\mu}\}\in\cW^{\bm}(\pG)$ does not belong to
$\mathcal S^{\bm}(\partial G)$. Theorem~\ref{thNonSmoothuNotPolyn} shows that if
Condition~\ref{condEigenMonom} or Condition~\ref{condNoEigenMonom} fails, then the
smoothness of generalized solutions may violate even for  the right-hand side
$\{f_{i\mu}\}\in\cS^{\bm}(\pG)$.

On the other hand, it is on principle that the smoothness violation in
Theorem~\ref{thNonSmoothuNotPolyn} occurs for a nonzero (and even nonregular) right-hand
side $\{f_{i\mu}\}$. It can be proved that  if we confine ourselves with regular
right-hand sides, then Conditions~\ref{condEigenMonom} and~\ref{condNoEigenMonom} are not
necessary for the preservation of smoothness.
\end{remark}

\appendix
\section{Appendix}

This appendix is included for the reader's convenience. Here we collect some known
results on weighted spaces and   properties of nonlocal operators, which are most
frequently referred to in the main part of the paper.

\subsection{Some Properties of Sobolev and Weighted Spaces}
In this subsection, we formulate some results concerning properties of weighted spaces
introduced in Sec.~\ref{subsectStatement}. Set
$$
K=\{y\in{\mathbb R}^2:\ r>0,\ |\omega|<\omega_0\},
$$
$$
\gamma_{\sigma}=\{y\in\mathbb R^2:\ r>0,\ \omega=(-1)^\sigma \omega_0\}\qquad
(\sigma=1,2).
$$

\begin{lemma}[see Lemma~2.1 in~\cite{GurRJMP03}]\label{lAppL2.1GurRJMP03}
Let $\psi\in W^{k-1/2}(\gamma_\sigma)$ {\rm (}$\sigma=1$ or $2$, $k\ge2${\rm )},
$\supp\psi\subset\{0\le r\le\varepsilon\}$ for some $\varepsilon>0$, and
$$
\dfrac{d^s}{dr^s}\psi(0)=0,\quad s=0,\dots, k-2.
$$
Then $\psi\in H^{k-1/2}_{\delta}(\gamma_\sigma)$ for any $\delta>0$ and
$$
\|\psi\|_{\psi\in H^{k-1/2}_{\delta}(\gamma_\sigma)}\le c
\|\psi\|_{W^{k-1/2}(\gamma_\sigma)},
$$
where $c=c(\varepsilon,\delta)>0$ does not depend on $\psi$.
\end{lemma}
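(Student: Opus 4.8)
The plan is to pass to the half-line and to compare the weighted trace norm with the ordinary Sobolev one by means of Hardy-type inequalities. First I would reduce to the range $0<\delta<1$: since $\supp\psi\subset\{0\le r\le\varepsilon\}$ and, on the bounded support, the weights $r^{2(\delta-k+|\alpha|)}$ are monotone in $\delta$, one has the continuous embedding $H^{k-1/2}_{\delta_0}(\gamma_\sigma)\hookrightarrow H^{k-1/2}_{\delta}(\gamma_\sigma)$ for $\delta\ge\delta_0$, so it suffices to treat a single $\delta\in(0,1)$ and let the larger values follow. Identifying $\gamma_\sigma$ with the half-line $(0,\infty)$ parametrized by $r$, I would then recall the intrinsic description of the weighted trace space obtained from the Kondrat'ev substitution $t=\ln r$, which turns the angle into a cylinder and the weight into the exponential weight $e^{2(\delta-k+1)t}$: the norm $\|\psi\|_{H^{k-1/2}_{\delta}(\gamma_\sigma)}$ is equivalent to
\[
\Big(\sum_{j=0}^{k-1}\int_0^\infty r^{2(\delta-k+j)+1}\,\big|\psi^{(j)}(r)\big|^2\,dr\ +\ \mathcal S_\delta[\psi]\Big)^{1/2},
\]
where $\mathcal S_\delta$ is a weighted fractional seminorm of order $1/2$ of the top derivative $\psi^{(k-1)}$. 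Because of the support condition only the behaviour as $r\to0$ is at stake, and it remains to bound each summand by $\|\psi\|_{W^{k-1/2}(\gamma_\sigma)}^2$.

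For the integer-order terms I would apply the one-dimensional weighted Hardy inequality
\[
\int_0^\infty r^{\beta}\,|f(r)|^2\,dr\le\frac{4}{(\beta+1)^2}\int_0^\infty r^{\beta+2}\,|f'(r)|^2\,dr\qquad(\beta<-1,\ f(0)=0),
\]
with $\beta=\beta_j:=2(\delta-k+j)+1$ and $f=\psi^{(j)}$. Since $\beta_{j+1}=\beta_j+2$, and $\beta_j<-1$ precisely for $0\le j\le k-2$ (this is where $0<\delta<1$ is used), while the hypotheses furnish $\psi^{(j)}(0)=0$ for exactly these $j$, I can chain the inequality from $j=0$ up to $j=k-1$ to get
\[
\sum_{j=0}^{k-2}\int_0^\infty r^{\beta_j}\,|\psi^{(j)}|^2\,dr\le C\int_0^\infty r^{2\delta-1}\,|\psi^{(k-1)}|^2\,dr .
\]
The boundary terms arising in the underlying integrations by parts vanish thanks to the conditions $\psi^{(j)}(0)=0$ and to the finiteness of the right-hand integrals, which is secured by propagating finiteness from the top index $j=k-1$ downwards (the top integral being controlled in the final step).

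The genuinely delicate point, and the one I expect to be the \emph{main obstacle}, is the top index $j=k-1$, where $\beta_{k-1}=2\delta-1\in(-1,1)$, the derivative $\psi^{(k-1)}$ is only of fractional ($H^{1/2}$) smoothness, and, crucially, it need not vanish at the origin; hence the integer Hardy inequality is unavailable. For $1/2\le\delta<1$ the weight $r^{2\delta-1}$ is bounded on $(0,\varepsilon]$ and the estimate $\int_0^\varepsilon r^{2\delta-1}|\psi^{(k-1)}|^2\,dr\le\varepsilon^{2\delta-1}\|\psi^{(k-1)}\|_{L_2}^2$ is immediate. The hard range is $0<\delta<1/2$, where I would invoke the \emph{fractional} Hardy inequality
\[
\int_0^\infty |x|^{-(1-2\delta)}\,|g(x)|^2\,dx\le C\,[g]_{H^{1/2-\delta}}^2 ,
\]
which is legitimate precisely because the weight exponent $1-2\delta$ lies in $(0,1)$, i.e. $s=\tfrac12-\delta\in(0,\tfrac12)$; applying it to (the zero extension of) $g=\psi^{(k-1)}$ and using the embedding $[g]_{H^{1/2-\delta}}\le C\|g\|_{H^{1/2}}$ on the bounded support gives the bound. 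The weighted fractional seminorm $\mathcal S_\delta[\psi]$ is controlled by the same mechanism, comparing the weighted Gagliardo seminorm of $\psi^{(k-1)}$ with its unweighted counterpart in the admissible range of $\delta$. Collecting the three steps and using $\|\psi^{(k-1)}\|_{H^{1/2}}\le C\|\psi\|_{W^{k-1/2}(\gamma_\sigma)}$ yields the asserted estimate with $c=c(\varepsilon,\delta)$. The decisive feature is that this last step fails at $\delta=0$ — it is exactly the distinction between $H^{1/2}$ and the Lions--Magenes space $H^{1/2}_{00}$ — which explains why the hypothesis $\delta>0$ cannot be dropped and why the constant must depend on $\delta$.
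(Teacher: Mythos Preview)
The paper does not actually prove this lemma: it is stated in the Appendix with the attribution ``see Lemma~2.1 in~\cite{GurRJMP03}'' and no argument is given here. So there is no proof in the present paper to compare your attempt against.

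That said, your direct one-dimensional approach via the intrinsic weighted trace norm, the chained integer Hardy inequalities for $\psi^{(j)}$ with $j\le k-2$, and the fractional Hardy inequality for $\psi^{(k-1)}$ in the range $0<\delta<\tfrac12$ is a sound and standard route, and your identification of the top-order term as the only genuinely delicate point (and of the failure at $\delta=0$ as the $H^{1/2}$ versus $H^{1/2}_{00}$ obstruction) is exactly right. An alternative, and perhaps the one used in the cited source, is to avoid the explicit one-dimensional fractional analysis altogether: extend $\psi$ to a function $u\in W^k(K)$ supported near the vertex with $D^\alpha u(0)=0$ for $|\alpha|\le k-2$, invoke a two-dimensional Kondrat'ev-type result (in the spirit of the lemmas from~\cite{KondrTMMO67} already used elsewhere in the paper) to conclude $u\in H^k_\delta(K)$ with a norm bound, and then take the trace using the definition~\eqref{eqTraceNormH}. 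That route trades the fractional Hardy step for an extension argument and a known 2D weighted embedding; yours is more self-contained but requires handling the half-integer seminorm $\mathcal S_\delta$ explicitly, which you have only sketched.
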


\begin{lemma}[see Lemma~3.3$'$ in~\cite{KondrTMMO67}]\label{lAppL3.3'Kondr}
Let a function $u\in H_a^k(K)$, where $k\ge0$ and $a\in\mathbb R$, be compactly
supported. Suppose that $p\in C^k(\overline{K})$ and $p(0)=0$. Then $pu\in H^k_{a-1}(K)$.
\end{lemma}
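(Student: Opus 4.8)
The plan is to reduce everything to the single weighted norm inequality
\[
\|p\phi\|_{H_{a-1}^k(K)}\le C\,\|\phi\|_{H_a^k(K)}
\]
for test functions $\phi\in C_0^\infty(\overline{K}\setminus\{0\})$ whose supports all lie in a fixed ball $\{r\le R\}$, with $C=C(R,p,k)$ independent of $\phi$. Granting this, the general statement follows by density. Indeed, $u\in H_a^k(K)$ is by definition an $H_a^k(K)$-limit of functions $\phi_n\in C_0^\infty(\overline{K}\setminus\{0\})$; since $u$ is compactly supported, I would fix a cutoff $\chi\in C_0^\infty$ equal to $1$ on $\supp u$ and replace $\phi_n$ by $\chi\phi_n$ (multiplication by $\chi$ being bounded on $H_a^k(K)$ by an elementary Leibniz estimate), so that the approximating sequence has uniformly bounded supports. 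Applying the inequality to $\phi_n-\phi_m$ shows that $(p\phi_n)$ is Cauchy in $H_{a-1}^k(K)$; its limit agrees with $pu$ in the sense of distributions, whence $pu\in H_{a-1}^k(K)$.

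To prove the inequality on $\phi$, I would expand by the Leibniz rule, $D^\alpha(p\phi)=\sum_{\beta\le\alpha}\binom{\alpha}{\beta}(D^\beta p)(D^{\alpha-\beta}\phi)$, and estimate each summand of $\|p\phi\|_{H_{a-1}^k(K)}^2=\sum_{|\alpha|\le k}\int_K r^{2(a-k+|\alpha|)-2}|D^\alpha(p\phi)|^2\,dy$ separately. Writing $\gamma=\alpha-\beta$, the weight bookkeeping is the heart of the matter: since $|\alpha|=|\gamma|+|\beta|$,
\[
r^{2(a-k+|\alpha|)-2}=r^{2(a-k+|\gamma|)}\cdot r^{2|\beta|-2},
\]
so the $(\alpha,\beta)$-term equals $\int_K \bigl(r^{2|\beta|-2}|D^\beta p|^2\bigr)\, r^{2(a-k+|\gamma|)}|D^\gamma\phi|^2\,dy$. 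Thus it suffices to show that the factor $r^{|\beta|-1}|D^\beta p|$ is bounded on $\supp\phi$ for every $\beta$ with $|\beta|\le k$; each term is then dominated by $C\int_K r^{2(a-k+|\gamma|)}|D^\gamma\phi|^2\,dy\le C\|\phi\|_{H_a^k(K)}^2$, and summing over the finitely many pairs $(\alpha,\beta)$ yields the claim.

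The main obstacle is the term $\beta=0$, which is precisely where the hypothesis $p(0)=0$ must be exploited and where the gain of one power of $r$ comes from: one needs $|p(y)|\le Cr$ on $\overline{K}$. This I would obtain from the fact that $\overline{K}=\{r\ge0,\ |\omega|\le\omega_0\}$ is star-shaped about the origin (the segment $\{ty:0\le t\le1\}$ keeps the polar angle $\omega$ fixed and hence stays in $\overline{K}$), so that $p(y)=p(y)-p(0)=\int_0^1\nabla p(ty)\cdot y\,dt$ and therefore $|p(y)|\le r\sup_{\supp\phi}|\nabla p|\le Cr$, using $\nabla p$ bounded on the compact set $\supp\phi$. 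The remaining terms $|\beta|\ge1$ are routine: the exponent $2|\beta|-2$ is nonnegative, so $r^{2|\beta|-2}\le R^{2|\beta|-2}$ on $\supp\phi$, while $|D^\beta p|$ is bounded there because $p\in C^k(\overline{K})$; hence $r^{|\beta|-1}|D^\beta p|$ is bounded. This is also where the compact-support assumption on $u$ is genuinely used, since on the unbounded cone $r^{2|\beta|-2}$ would otherwise grow. Collecting the bounds completes the proof.
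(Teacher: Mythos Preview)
The paper does not supply its own proof of this lemma; it is stated in the Appendix with the attribution ``see Lemma~3.3$'$ in \cite{KondrTMMO67}'' and no argument is given. Your approach---Leibniz expansion followed by the weight splitting $r^{2(a-1-k+|\alpha|)}=r^{2(a-k+|\gamma|)}\,r^{2|\beta|-2}$, then using $|p(y)|\le Cr$ from star-shapedness of $\overline K$ for the $\beta=0$ term and boundedness of $D^\beta p$ on compact sets for $|\beta|\ge1$---is the standard proof of this kind of weighted multiplication estimate, and for $k\ge1$ it is correct as written. The density reduction via a fixed cutoff $\chi$ is also fine: multiplication by $\chi\in C_0^\infty$ is bounded on $H_a^k(K)$ by the same Leibniz computation (with $D^\beta\chi$ supported away from the origin for $|\beta|\ge1$).

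One remark: your bound $|p(y)|\le r\sup|\nabla p|$ requires $\nabla p$, which the hypothesis $p\in C^k(\overline K)$ provides only when $k\ge1$. For $k=0$ the assumption $p\in C^0(\overline K)$ with $p(0)=0$ does \emph{not} force $|p|\le Cr$, and the conclusion can in fact fail (e.g.\ $p(y)=1/\sqrt{\ln(1/|y|)}$ near $0$, $a=0$, $u$ the characteristic function of $K\cap\{|y|<1/2\}$ gives $\int_K r^{-2}|pu|^2\,dy=\infty$). So either $k\ge1$ is intended or Kondrat'ev's original statement carries a slightly stronger hypothesis on $p$; in any case, in the body of the paper the lemma is invoked only after a Leibniz expansion, so the case $k\ge1$ is the one that matters and your argument covers it.
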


\begin{lemma}[see Lemma~4.20 in~\cite{KondrTMMO67}]\label{lAppL4.20Kondr}
The function $r^{i\lambda_0}\Phi(\omega)\ln^s r$, where $\Im\lambda_0=-(k-1)$ and $s\ge0$
is an integer, belongs to $W^k(K\cap\{|y|<1\})$ if and only if it is a homogeneous
polynomial in $y_1,y_2$ of degree $k-1$.
\end{lemma}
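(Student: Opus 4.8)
The plan is to prove the two implications separately, the substance lying in the ``only if'' part. For the ``if'' part I note that a homogeneous polynomial of degree $k-1$ equals $r^{k-1}\Phi(\omega)$ with $\Phi$ a trigonometric polynomial, which is exactly the function of the statement with $\lambda_0=-i(k-1)$ (so $\Im\lambda_0=-(k-1)$) and $s=0$; being smooth up to $r=0$, it clearly lies in $W^k(K\cap\{|y|<1\})$. For the converse, suppose $u:=r^{i\lambda_0}\Phi(\omega)\ln^s r\in W^k(K\cap\{|y|<1\})$. The useful preliminary remark is that the vertex $r=0$ is a \emph{boundary} point of the open sector $K\cap\{|y|<1\}$, on which $u$ is smooth; hence membership in $W^k$ is equivalent to the square-integrability over the sector of the \emph{classical} derivatives $D^\alpha u$, $|\alpha|\le k$, and the only issue is their behaviour as $r\to0^+$. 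Since $\Im\lambda_0=-(k-1)$ we have $|r^{i\lambda_0}|=r^{k-1}$, so $u=r^{k-1}r^{i\Re\lambda_0}\Phi(\omega)\ln^s r$.

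Next I would record how differentiation acts. In polar coordinates each $\partial/\partial y_j$ is a combination of $\partial_r$ and $r^{-1}\partial_\omega$; applied to a term $r^{i\lambda_0-p}(\ln r)^l\psi(\omega)$ it lowers the power of $r$ by one and keeps the degree in $\ln r$ at most $s$. Inductively (using that $\Phi$ is smooth), $D^\alpha u=r^{i\lambda_0-|\alpha|}Q_\alpha(\ln r,\omega)$ with $Q_\alpha$ a polynomial of degree $\le s$ in $\ln r$ whose coefficients are smooth in $\omega$. Because $|r^{i\lambda_0-|\alpha|}|=r^{(k-1)-|\alpha|}$ and $dy=r\,dr\,d\omega$, the radial integrand of $|D^\alpha u|^2$ carries the weight $r^{2((k-1)-|\alpha|)+1}$. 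For $|\alpha|\le k-1$ this exponent is $\ge1>-1$, so those integrals converge (the logarithmic factors being harmless), and $u\in W^{k-1}$ automatically.

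The crux is the order-$k$ case, where the radial weight is exactly $r^{-1}$ — this is precisely the reason $\Im\lambda_0=-(k-1)$ is the critical exponent. The substitution $t=-\ln r$ gives $\int_0^1 r^{-1}(\ln r)^{2l}\,dr=\int_0^\infty t^{2l}\,dt=\infty$ for \emph{every} integer $l\ge0$, so logarithmic factors never restore integrability at this order. Consequently $\int_{K\cap\{|y|<1\}}|D^\alpha u|^2\,dy<\infty$ forces $Q_\alpha\equiv0$, i.e. $D^\alpha u\equiv0$, for each $|\alpha|=k$. I expect this step — establishing that at the critical homogeneity the top-order derivatives cannot be square-integrable unless they vanish identically, and in particular that the $\ln r$ factors do not help — to be the main obstacle; the rest is bookkeeping.

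It remains to deduce the conclusion from the vanishing of all order-$k$ derivatives on the connected open sector $K\cap\{|y|<1\}$: a local Taylor argument then shows $u$ coincides there with a single polynomial of degree $\le k-1$, say $u=\sum_{d=0}^{k-1}r^d p_d(\omega)$ in polar form. Comparing this with $u=r^{k-1}r^{i\Re\lambda_0}\Phi(\omega)\ln^s r$ and invoking the linear independence over $(0,1)$ of $r^0,\dots,r^{k-1}$ and of $r^{k-1}(\ln r)^s r^{i\Re\lambda_0}$, I conclude that either $\Phi\equiv0$, whence $u\equiv0$, or else $s=0$ and $\Re\lambda_0=0$; in the latter case $u=r^{k-1}\Phi(\omega)=p_{k-1}(\omega)\,r^{k-1}$ is a homogeneous polynomial of degree $k-1$. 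This establishes the equivalence.
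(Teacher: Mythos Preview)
The paper does not supply its own proof of this lemma; it is stated in the Appendix with a bare reference to Lemma~4.20 of Kondrat'ev~\cite{KondrTMMO67}, so there is no in-paper argument to compare against. Your direct proof is correct and is essentially the standard one.

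The only place where a reader might want one extra line is the passage from ``$\int_0^1 r^{-1}(\ln r)^{2l}\,dr=\infty$ for every $l\ge0$'' to ``$Q_\alpha\equiv0$''. Since $|Q_\alpha(\ln r,\omega)|^2$ contains cross-terms in $\ln r$, divergence of each monomial does not by itself rule out cancellation. The clean way to close this is to observe that $p(t):=\int_{-\omega_0}^{\omega_0}|Q_\alpha(t,\omega)|^2\,d\omega$ is a \emph{nonnegative} polynomial in $t=\ln r$; if $p\not\equiv0$ its (necessarily positive) leading coefficient forces $\int_0^1 r^{-1}p(\ln r)\,dr=\infty$, hence $p\equiv0$ and $Q_\alpha\equiv0$. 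With that remark made explicit, the argument is complete.
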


\begin{lemma}\label{lDense}
Let $f\in W^k(\mathbb R^2)$ and $D^\alpha f(0)=0$, $|\alpha|\le k-2$, if $k\ge2$. Then
there exists a sequence $f^n\in C_0^\infty(\mathbb R^2)$, $n=1,2,\dots$, such that
$f^n(y)=0$ in some neighborhood of the origin $($depending on $n)$ and $f^n\to f$ in
$W^{k}(\mathbb R^2)$.
\end{lemma}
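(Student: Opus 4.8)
The plan is to prove the lemma in three stages — a reduction to cutting off near the origin, a quantitative cut-off estimate resting on a weighted Hardy inequality, and a routine mollification. The only delicate point is a two-dimensional \emph{critical} Hardy inequality, and this is exactly where the hypotheses $D^\alpha f(0)=0$, $|\alpha|\le k-2$, are consumed. As a first reduction I would produce, for each $n$, a function $h_n\in W^k(\bbR^2)$ with $h_n=0$ in a neighborhood of the origin and $h_n\to f$ in $W^k$. Once such $h_n$ are found, mollifying each $h_n$ with a mollifier of radius smaller than half the radius of the vanishing ball keeps it zero near $0$ and gives a $C^\infty$ approximation in $W^k$; multiplying by a cut-off equal to $1$ on a large ball makes the support compact without spoiling $W^k$-convergence, and a diagonal argument yields the desired $f^n\in C_0^\infty(\bbR^2)$ vanishing near $0$.

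To build $h_n=\zeta_\epsilon f$ I would \emph{not} use a power cut-off $\eta(y/\epsilon)$: since a point carries positive $W^{k,2}$-capacity in $\bbR^2$ for $k\ge2$, a naive cut-off fails term by term (already for $k=1$ and $f\sim(\ln 1/r)^a$). Instead, fix $\psi\in C^\infty(\bbR)$ with $\psi\equiv0$ on $(-\infty,0]$ and $\psi\equiv1$ on $[1,\infty)$, and set
\[
\zeta_\epsilon(y)=\psi\!\left(\frac{\ln(|y|/\epsilon^2)}{\ln(1/\epsilon)}\right),
\]
so that $\zeta_\epsilon\equiv0$ for $|y|\le\epsilon^2$, $\zeta_\epsilon\equiv1$ for $|y|\ge\epsilon$, and, by the chain rule, $|D^\beta\zeta_\epsilon(y)|\le C_\beta(\ln(1/\epsilon))^{-1}|y|^{-|\beta|}$ on the annulus $A_\epsilon=\{\epsilon^2\le|y|\le\epsilon\}$ for $1\le|\beta|\le k$. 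Then $\zeta_\epsilon f\in W^k$ vanishes near $0$, and it remains to show $(1-\zeta_\epsilon)f\to0$ in $W^k$. By Leibniz, $D^\alpha((1-\zeta_\epsilon)f)=(1-\zeta_\epsilon)D^\alpha f-\sum_{0\ne\beta\le\alpha}\binom{\alpha}{\beta}(D^\beta\zeta_\epsilon)(D^{\alpha-\beta}f)$; the first term is supported in $\{|y|\le\epsilon\}$ and dominated by $|D^\alpha f|\in L_2$, hence tends to $0$. For a commutator term with $|\beta|=j\ge1$, whose factor $D^\beta\zeta_\epsilon$ is supported in $A_\epsilon$, I would use the cut-off bound together with $\ln(e/|y|)\le 3\ln(1/\epsilon)$ on $A_\epsilon$ to get
\[
\|(D^\beta\zeta_\epsilon)(D^{\alpha-\beta}f)\|_{L_2}^2\le C\int_{A_\epsilon}\frac{|D^{\alpha-\beta}f|^2}{|y|^{2j}(\ln(e/|y|))^2}\,dy,
\]
which is the tail of a convergent integral provided the master estimate below holds.

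Thus the whole proof reduces to the master inequality: for every multi-index $\mu$ with $|\mu|\le k$,
\[
\int_{|y|\le1/2}\frac{|D^\mu f(y)|^2}{|y|^{2(k-|\mu|)}(\ln(e/|y|))^2}\,dy\le C\,\|f\|_{W^k(\bbR^2)}^2,
\]
which covers the previous display because $|y|^{-2j}\le|y|^{-2(k-|\mu|)}$ on the unit ball once $j\le k-|\mu|$ (true since $|\alpha|\le k$). I would establish it by iterating the one-dimensional radial Hardy inequality, lowering the weight exponent two at a time and passing successively through $D^\mu f,\nabla D^\mu f,\dots$ up to order $k-1$. Each such step is a \emph{non-critical} Hardy inequality (weight exponent $\ne0$), legitimate precisely because the intermediate functions, being of order $\le k-2$, vanish at the origin by hypothesis, so no boundary term arises in the integration by parts.

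The final step, from order $k-1$ to order $k$, hits the \emph{critical} exponent, where ordinary Hardy fails in $\bbR^2$; here I would invoke the borderline logarithmic Hardy inequality $\int_{|y|\le1/2}|u|^2|y|^{-2}(\ln(e/|y|))^{-2}\,dy\le C\|u\|_{W^1}^2$, applied to $u=\nabla^{\,k-1}D^\mu f\in W^1$, which requires no vanishing. This dichotomy — the vanishing conditions dispose of every sub-critical step, while the logarithmic weight absorbs the single critical step — is exactly the mechanism by which $D^\alpha f(0)=0$ for $|\alpha|\le k-2$ makes the point removable, and I expect the critical logarithmic inequality to be the main obstacle; the reduction and the mollification in the first two stages are routine.
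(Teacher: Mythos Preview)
Your proposal is correct. The paper itself gives no argument here --- it simply refers to Lemma~4.1 of~\cite{GurAdvDiffEq} --- so there is no in-paper proof to compare against. Your route (logarithmic cut-off $\zeta_\epsilon$, Leibniz expansion, and a master weighted estimate obtained by iterating Hardy inequalities up to the borderline logarithmic one) is a standard self-contained way to establish this density; it is very likely close in spirit to the referenced proof, which is written for the case $2m=2$.

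One small technical caution on the subcritical steps. Saying that ``no boundary term arises in the integration by parts'' because $D^\mu f(0)=0$ is a little too quick: the Sobolev embedding only yields $|D^\mu f(y)|=O(|y|^{\,k-|\mu|-2+\alpha})$ for $\alpha<1$, which does not by itself kill a boundary contribution of size $|D^\mu f(r)|^2\,r^{-(2(k-|\mu|)-2)}$. The clean justification is to run the iteration from the top down: the critical logarithmic Hardy inequality makes the $(k-1)$-level integral finite; then, at each lower step, finiteness of the right-hand side gives $\partial_r D^\mu f\in L^1(0,t)$, hence $D^\mu f(r,\omega)=\int_0^r \partial_r D^\mu f$, and the weighted Hardy inequality with weights $r^{-p}(\ln(e/r))^{-2}$ and $r^{-p+2}(\ln(e/r))^{-2}$ follows from Muckenhoupt's criterion, which is easily verified for every $p=2(k-|\mu|)-1\ge 3$. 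With this adjustment your argument goes through without change.
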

\begin{proof} The proof  is analogous to that of Lemma 4.1 in~\cite{GurAdvDiffEq}. \end{proof}

\subsection{Nonlocal Problems in Plane Angles in Weighted Spaces}

In this subsection and in the next one, we formulate some properties of solutions of
problem~\eqref{eqPinK}, \eqref{eqBinK} in the spaces~\eqref{eqSpacesCal1}
and~\eqref{eqSpacesCal2}. First, we consider the case of weighted spaces.

For convenience, we rewrite this problem:
\begin{equation}\label{eqAppPBinK}
\begin{aligned}
\mathbf P_j(y,D_y)U_j&=F_j(y) & & (y\in K_j^\varepsilon),\\
 \mathbf
B_{j\sigma\mu}(y,D_y)U&=\Phi_{j\sigma\mu}(y) & & (y\in \gamma_{j\sigma}^\varepsilon),
\end{aligned}
\end{equation}
Along with problem~\eqref{eqAppPBinK}, we consider the following model problem in the
unbounded angles.
\begin{equation}\label{eqAppCalPBinK}
\begin{aligned}
\mathcal P_j(D_y)U_j&=F_j(y) & & (y\in K_j),\\
 \mathcal
B_{j\sigma\mu}(D_y)U&=\Phi_{j\sigma\mu}(y)& & (y\in \gamma_{j\sigma}).
\end{aligned}
\end{equation}

\begin{lemma}[see Lemma~2.3 in~\cite{GurMatZam05}]\label{lAppL2.3GurMatZam05}
Let a function  $U$ be a solution of problem~\eqref{eqAppPBinK}
{\rm(}or~\eqref{eqAppCalPBinK}{\rm)} such that
$$U_j\in W^{2m}(K_{j}^{D_\chi
\varepsilon}\setminus\overline{\cO_\delta(0)})\quad\forall\delta>0;\qquad U\in \mathcal
H_{a-2m}^0(K^{D_\chi \varepsilon}),
$$
where $D_\chi$ is given by~\eqref{eqd1d2} and $a\in\mathbb R$. Suppose that
$$
\{F_j\}\in \mathcal H_{a}^{0}(K^{\varepsilon}),\qquad \{\Phi_{j\sigma\mu}\}\in \mathcal
H_{a}^{\bm}(\gamma^\varepsilon).
$$
Then $ U\in\mathcal  H_{a}^{2m}(K^{\varepsilon}). $
\end{lemma}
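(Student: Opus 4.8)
The statement to prove is Lemma~\ref{lAppL2.3GurMatZam05}: a local-smoothness-plus-weighted-$L_2$-control solution of the model nonlocal problem in plane angles automatically lies in the weighted Sobolev space $\mathcal H_a^{2m}(K^\varepsilon)$, given that the right-hand sides lie in the appropriate weighted spaces. This is an elliptic-regularity (a~priori estimate) statement in Kondrat'ev-type weighted spaces, adapted to the nonlocal setting where the operators $\mathbf B_{j\sigma\mu}$ couple the different angles $K_j$ through the rotation-homothety maps $\mathcal G_{j\sigma ks}$. Since this lemma is cited from \cite{GurMatZam05}, I am only reconstructing the natural proof architecture.

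**The plan.** The plan is to localize in the radial variable by a dyadic decomposition and then reduce each dyadic annulus to a fixed reference domain by scaling, where the standard local elliptic estimate with nonlocal boundary conditions applies uniformly. First I would choose a smooth partition of unity $\{\eta_n\}$ subordinate to a dyadic covering of the angle: set $\eta_n(y)=\eta(2^n r)$ for a fixed cutoff $\eta$ supported in an annulus $\{1/2<r<2\}$, so that on the support of $\eta_n$ one has $r\sim 2^{-n}$ and the weight $\rho=r$ is essentially constant, $\rho^{2(a-2m+|\alpha|)}\sim 2^{-2n(a-2m+|\alpha|)}$. On each annulus I rescale $y=2^{-n}z$, which transforms $K_j^\varepsilon$ into a fixed annular sector and, crucially by Condition~\ref{condK1}, sends the homothety maps $\mathcal G_{j\sigma ks}$ to maps of the same form (the homothety coefficients $\chi_{j\sigma ks}$ being scale-invariant), so the rescaled nonlocal problem has the \emph{same} principal operators $\mathcal P_j, \mathcal B_{j\sigma\mu}$ on a fixed geometry. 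On this fixed configuration I would invoke the classical local a~priori estimate for the nonlocal elliptic problem (the analogue of Lopatinskii-Shapiro estimates, which hold here because of Condition~\ref{condElliptic} together with the structure established for the model operators), giving
$$
\|U\|_{W^{2m}(\text{annulus})}\le c\bigl(\|F\|_{W^0}+\textstyle\sum\|\Phi_{j\sigma\mu}\|_{W^{\bm}}+\|U\|_{W^0}\bigr),
$$
with a constant $c$ \emph{independent of }$n$ because the geometry and the operators are now fixed.

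**Assembling the weighted estimate.** Next I would undo the scaling, tracking how each term picks up its power of $2^{-n}$ according to the number of derivatives and the order of the trace space, and verify that these powers match precisely the weight exponents $2(a-2m+|\alpha|)$, $2(a-2m)$, and $2(a-m_{j\sigma\mu}-1/2)$ built into the norms of $\mathcal H_a^{2m}$, $\mathcal H_a^0$, and $\mathcal H_a^{\bm}$. The scale-invariance of the weighted norms under $y\mapsto 2^{-n}z$ is exactly what the Kondrat'ev weight is designed to achieve, so the rescaled estimate on each annulus becomes a bound for the weighted norm contribution on that annulus. Summing over $n$ then yields
$$
\|U\|_{\mathcal H_a^{2m}(K^\varepsilon)}\le c\bigl(\|F\|_{\mathcal H_a^0}+\|\Phi_{j\sigma\mu}\|_{\mathcal H_a^{\bm}}+\|U\|_{\mathcal H_{a-2m}^0}\bigr),
$$
where the last term uses the hypothesis $U\in\mathcal H_{a-2m}^0(K^{D_\chi\varepsilon})$ and the summability of $\sum_n 2^{-2n(\cdots)}\|U\|^2$ follows from finiteness of this lower-order weighted norm. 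The hypothesis $U_j\in W^{2m}(K_j^{D_\chi\varepsilon}\setminus\overline{\cO_\delta(0)})$ guarantees that each finite-$n$ term is genuinely finite (there is no singularity issue away from the vertex), so the only thing being proved is summability as $n\to\infty$, i.e. as $r\to0$.

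**Main obstacle.** The hard part will be the nonlocal coupling in the boundary estimate. Unlike the purely local case, the value of $\mathbf B_{j\sigma\mu}U$ on $\gamma_{j\sigma}$ depends on $U_k$ evaluated at $\mathcal G_{j\sigma ks}y$, a point at a \emph{different} radius $\chi_{j\sigma ks}\,r$ in a \emph{different} angle $K_k$; thus the annulus $\{r\sim 2^{-n}\}$ in sector $j$ is coupled to annuli $\{r\sim \chi_{j\sigma ks}2^{-n}\}$ in sectors $k$. To make the dyadic estimate close, I must ensure that these coupled annuli all sit within a fixed band of dyadic scales — this is precisely why the definitions $D_\chi=2\max\{\chi_{j\sigma ks}\}$ and $d_\chi=\min\{\chi_{j\sigma ks}\}/2$ in \eqref{eqd1d2} are introduced, and why the hypothesis controls $U$ on the \emph{enlarged} angle $K^{D_\chi\varepsilon}$ rather than merely $K^\varepsilon$. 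The estimate on a given annulus must therefore absorb neighboring-scale contributions, and summing in $n$ requires that the resulting ``band'' operator be bounded on the weighted $\ell_2$-sequence space — a finite-band shift argument, where the scale-invariance of the weight again guarantees that the shift by $\log_2\chi_{j\sigma ks}$ acts boundedly. Verifying that the constants remain uniform under this coupling, and that no single scale is counted with unbounded multiplicity, is the technical crux; the strong inequality $|(-1)^\sigma b_j+\omega_{j\sigma ks}|<b_k$ together with $\Omega_{is}(\Gamma_i)\subset G$ ensures the transformed arguments stay in the interior and the band is finite, so the argument closes.
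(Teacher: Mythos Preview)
The paper does not contain its own proof of this lemma: it is stated in the Appendix with the attribution ``see Lemma~2.3 in~\cite{GurMatZam05}'' and is used as a black box. So there is no proof in the present paper to compare against; you are reconstructing the argument from the cited reference.

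That said, your reconstruction is the correct and standard one for this class of results. The Kondrat'ev technique --- dyadic radial decomposition, rescaling each annulus to a fixed reference configuration, invoking a uniform local elliptic a~priori estimate, and summing --- is exactly how such weighted-regularity lemmas are proved, and your identification of the main obstacle (the nonlocal terms couple the annulus $\{r\sim 2^{-n}\}$ in $K_j$ to the shifted annuli $\{r\sim \chi_{j\sigma ks}2^{-n}\}$ in $K_k$) is precisely the point that distinguishes this lemma from its purely local analogue in~\cite{KondrTMMO67}. Your explanation of why the enlarged domain $K^{D_\chi\varepsilon}$ and the constants $D_\chi,d_\chi$ from~\eqref{eqd1d2} enter the hypotheses is also on target.

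One point you should make explicit for the variable-coefficient problem~\eqref{eqAppPBinK} (as opposed to the homogeneous model~\eqref{eqAppCalPBinK}): after rescaling $y=2^{-n}z$, the operators $\mathbf P_j(2^{-n}z,D_z)$ and $\mathbf B_{j\sigma\mu}(2^{-n}z,D_z)$ are not literally the principal parts $\mathcal P_j(D_z)$, $\mathcal B_{j\sigma\mu}(D_z)$, but they converge to them in operator norm as $n\to\infty$. Hence for $n$ large the difference is a small perturbation absorbable into the left-hand side of the local estimate, while finitely many small-$n$ annuli are covered by the hypothesis $U_j\in W^{2m}(K_j^{D_\chi\varepsilon}\setminus\overline{\mathcal O_\delta(0)})$. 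This is routine, but without it the ``same principal operators'' claim is only literally true for~\eqref{eqAppCalPBinK}.
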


Consider the asymptotics of solutions of problem~\eqref{eqAppCalPBinK}.
\begin{lemma}[see Lemma~2.1 in~\cite{GurPetr03}]\label{lAppL2.1GurPetr03}
The function
\begin{equation}\label{eqAppL2.1GurPetr03}
U=r^{i\lambda_0}\sum\limits_{l=0}^{l_0}\frac{\displaystyle 1}{\displaystyle l!}(i\ln
r)^l\varphi^{(l_0-l)}(\omega),
\end{equation}
is a solution of homogeneous problem~\eqref{eqAppCalPBinK} if and only if~$\lambda_0$ is
an eigenvalue of the operator~$\tilde{\cal L}(\lambda)$ and $\varphi^{(0)}, \dots,
\varphi^{(\varkappa-1)}$ is a Jordan chain corresponding to the eigenvalue~$\lambda_0;$
here $l_0\le\varkappa-1$.
\end{lemma}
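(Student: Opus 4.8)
The plan is to reduce the statement to the defining property of the operator pencil $\tilde\cL(\lambda)$ by separating the radial and angular variables, exactly as in Kondrat'ev's classical argument. First I would record the basic homogeneity identities. Since $\cP_j(D_y)$ is homogeneous of order $2m$ and $\cB_{j\sigma\mu}(D_y)$ collects terms of order $m_{j\sigma\mu}$ composed with the rotation--homothety operators $\cG_{j\sigma ks}$, a direct computation in polar coordinates gives
\begin{equation*}
\cP_j(D_y)\bigl(r^{i\lambda}\varphi_j(\omega)\bigr)=r^{i\lambda-2m}\,\tilde\cP_j(\omega,D_\omega,\lambda)\varphi_j(\omega),
\end{equation*}
together with the corresponding identity for $\cB_{j\sigma\mu}$ in which the homothety factors enter as $\chi_{j\sigma ks}^{\,i\lambda-m_{j\sigma\mu}}$ and the rotations shift the angular argument by $\omega_{j\sigma ks}$. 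These are precisely the formulas encoded in the definition of $\tilde\cL(\lambda)$. Consequently, for a pure power $U=r^{i\lambda_0}\varphi^{(0)}(\omega)$, the homogeneous problem~\eqref{eqAppCalPBinK} holds if and only if $\tilde\cL(\lambda_0)\varphi^{(0)}=0$, i.e. if and only if $\lambda_0$ is an eigenvalue with eigenvector $\varphi^{(0)}$. This settles the case $l_0=0$.

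For the general logarithmic ansatz I would exploit the identity $(i\ln r)^l r^{i\lambda_0}=\partial_\lambda^l\big|_{\lambda=\lambda_0}r^{i\lambda}$, which lets me rewrite
\begin{equation*}
U_j=\sum_{l=0}^{l_0}\frac{1}{l!}\,\partial_\lambda^l\big|_{\lambda=\lambda_0}\bigl(r^{i\lambda}\bigr)\,\varphi_j^{(l_0-l)}(\omega).
\end{equation*}
Because $D_y$ acts only on $y$ while $\lambda$ is a parameter, $\cP_j(D_y)$ commutes with $\partial_\lambda$, so applying the operator and then differentiating produces $\partial_\lambda^l\big|_{\lambda_0}\bigl[r^{i\lambda-2m}\tilde\cP_j(\omega,D_\omega,\lambda)\varphi_j^{(l_0-l)}\bigr]$, and analogously on the boundary rays. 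Expanding by the Leibniz rule and collecting the coefficient of each power $(i\ln r)^p r^{i\lambda_0-2m}$ (respectively $(i\ln r)^p r^{i\lambda_0-m_{j\sigma\mu}}$), the vanishing of $\cP_jU_j$ and $\cB_{j\sigma\mu}U$ becomes the system
\begin{equation*}
\sum_{q=0}^{p}\frac{1}{q!}\,\tilde\cL^{(q)}(\lambda_0)\,\varphi^{(p-q)}=0,\qquad p=0,\dots,l_0,
\end{equation*}
where $\tilde\cL^{(q)}$ is the $q$-th $\lambda$-derivative of $\tilde\cL$. By definition this system says precisely that $\varphi^{(0)}$ is an eigenvector and $\varphi^{(1)},\dots,\varphi^{(l_0)}$ are associated vectors, i.e. that $\varphi^{(0)},\dots,\varphi^{(l_0)}$ form a Jordan chain of $\tilde\cL(\lambda)$ at $\lambda_0$; in particular $\lambda_0$ is an eigenvalue and $l_0\le\varkappa-1$. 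Reading the equivalence in both directions proves the lemma.

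The step requiring the most care is the bookkeeping in the Leibniz expansion for the nonlocal boundary operators. There the homothety weights $\chi_{j\sigma ks}^{\,i\lambda-m_{j\sigma\mu}}$ themselves depend on $\lambda$, so differentiating in $\lambda$ produces additional factors of $\ln\chi_{j\sigma ks}$; these are exactly the contributions that assemble into the derivatives $\tilde\cL^{(q)}(\lambda_0)$, so I must check that the full $\lambda$-dependence of $\tilde\cB_{j\sigma\mu}(\omega,D_\omega,\lambda)$ is differentiated consistently and that no spurious terms appear. Once it is verified that $\partial_\lambda$ commutes through $\cP_j(D_y)$ and through the nonlocal composition with $\cG_{j\sigma ks}$, the matching of powers of $\ln r$ is a purely algebraic identity and the Jordan-chain relations drop out. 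Everything else is the routine point that $\tilde\cL(\lambda)$ is analytic in $\lambda$ (available from Lemma~\ref{lSpectrum}), so that the derivatives $\tilde\cL^{(q)}(\lambda_0)$ are well defined.
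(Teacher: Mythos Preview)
Your argument is correct and is the standard Kondrat'ev--Keldysh computation: separate variables via the identity $(i\ln r)^l r^{i\lambda_0}=\partial_\lambda^l\big|_{\lambda_0}r^{i\lambda}$, commute the differential operators through $\partial_\lambda$, expand by Leibniz, and match coefficients of $(i\ln r)^p$ to obtain precisely the Jordan-chain relations $\sum_{q=0}^{p}\tfrac{1}{q!}\tilde\cL^{(q)}(\lambda_0)\varphi^{(p-q)}=0$. Your remark about the $\lambda$-dependence of the homothety weights $\chi_{j\sigma ks}^{\,i\lambda-m_{j\sigma\mu}}$ is the right point of care, and your handling of it is correct: those factors are part of $\tilde\cB_{j\sigma\mu}(\omega,D_\omega,\lambda)$, so their $\lambda$-derivatives are absorbed into $\tilde\cL^{(q)}(\lambda_0)$.

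Note, however, that the present paper does not actually prove this lemma; it is quoted in the Appendix with a reference to Lemma~2.1 in~\cite{GurPetr03}. So there is no in-paper proof to compare against. Your proof is exactly the classical one that the cited reference (and Kondrat'ev's original paper) carries out, so there is no divergence in approach.
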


Any solution of the kind~\eqref{eqAppL2.1GurPetr03} is called a \textit{power solution}.

\begin{lemma}[see Theorem~2.2 and Remark~2.2 in~\cite{GurPetr03}]\label{lAppTh2.2GurPetr03}
Let
$$\{F_j\}\in \mathcal H_a^0(K)\cap \mathcal H_{a'}^0(K),\quad
\{\Phi_{j\sigma\mu}\}\in \mathcal H_a^{\bm}(\gamma)\cap \mathcal H_{a'}^{\bm}(\gamma),$$
where $a>a'$. Suppose that the line $\Im\lambda=a'-1$ contains no eigenvalues of the
operator~$\tilde{\cal L}(\lambda)$. If $U$ is a solution of problem~\eqref{eqAppCalPBinK}
belonging to the space~$\mathcal H_a^{2m}(K)$, then
$$
U= \sum\limits_{n=1}^{n_0}\sum\limits_{q=1}^{J_n}\sum\limits_{l_0=0}^{\varkappa_{qn}-1}
c_n^{(l_0,q)}W_n^{(l_0,q)}(\omega, r)+U'.
$$
Here $\lambda_1, \dots, \lambda_{n_0}$ are eigenvalues of $\tilde{\cal L}(\lambda)$
located in the band $a'-1<\Im\lambda<a-1;$
$$
W_n^{(l_0,q)}(\omega, r)=r^{i\lambda_n}\sum\limits_{l=0}^{l_0}
   \frac{\displaystyle 1}{\displaystyle l!}(i\ln r)^l\varphi_n^{(l_0-l,q)}(\omega)
$$
are the power solutions of homogeneous problem~\eqref{eqAppCalPBinK}$;$
$$
 \{\varphi_n^{(0,q)}, \dots, \varphi_n^{(\varkappa_{qn}-1,q)}:\ q=1,\dots,J_n\}
$$
is a canonical system of Jordan chains of the operator~$\tilde{\cal L}(\lambda)$
corresponding to the eigenvalue~$\lambda_n;$ $c_n^{(m,q)}$ are some complex constants$;$
finally, $U'$ is a solution of problem~\eqref{eqAppCalPBinK} belonging to the space
$\mathcal H_{a'}^{2m}(K)$.
\end{lemma}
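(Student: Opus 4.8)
The natural engine for such an asymptotic expansion is the Mellin transform in the radial variable, which converts the homogeneous model problem~\eqref{eqAppCalPBinK} into the analytic pencil $\tilde\cL(\lambda)$ and reduces the extraction of singular terms to a residue computation. The plan is to pass to polar coordinates $y\mapsto(\omega,r)$ and apply the Mellin transform
\[
\tilde U(\lambda,\omega)=\int_0^\infty r^{-i\lambda}U(r,\omega)\,\frac{dr}{r},
\]
normalized so that $rD_r$ becomes multiplication by $\lambda$, in accordance with the definition of $\tilde\cL(\lambda)$. By Parseval's identity for the Mellin transform, the membership $U\in\cH_a^{2m}(K)$ is equivalent to $\tilde U(\lambda,\cdot)$ being, as a $W^{2m}(-\omega_j,\omega_j)$-valued function, holomorphic and square-integrable on the line $\Im\lambda=a-1$; likewise $\{F_j\}\in\cH_a^0\cap\cH_{a'}^0$ and $\{\Phi_{j\sigma\mu}\}\in\cH_a^{\bm}\cap\cH_{a'}^{\bm}$ yield Mellin transforms that are square-integrable on both lines $\Im\lambda=a-1$ and $\Im\lambda=a'-1$, hence holomorphic in the whole strip $a'-1<\Im\lambda<a-1$.

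The first substantive step is to verify that the Mellin transform carries problem~\eqref{eqAppCalPBinK} exactly into the pencil equation $\tilde\cL(\lambda)\tilde U(\lambda,\cdot)=\{\tilde F(\lambda,\cdot),\tilde\Phi(\lambda,\cdot)\}$. The homogeneous operators $\cP_j(D_y)$ and $B_{j\sigma\mu ks}(D_y)$ produce in polar coordinates the factors $r^{-2m}$ and $r^{-m_{j\sigma\mu}}$ together with the symbols $\tilde\cP_j(\omega,D_\omega,\lambda)$ and $\tilde B_{j\sigma\mu ks}(\omega,D_\omega,\lambda)$. The only genuinely nonlocal ingredient is the action of $\cG_{j\sigma ks}$, a rotation through $\omega_{j\sigma ks}$ composed with a homothety of ratio $\chi_{j\sigma ks}$: the rotation is the angular shift $\omega\mapsto\omega+\omega_{j\sigma ks}$, while the homothety $r\mapsto\chi_{j\sigma ks}r$ turns, under the Mellin transform, into multiplication by $(\chi_{j\sigma ks})^{i\lambda}$; combined with the homogeneity factor this reproduces exactly the coefficient $(\chi_{j\sigma ks})^{i\lambda-m_{j\sigma\mu}}$ appearing in $\tilde\cB_{j\sigma\mu}(\omega,D_\omega,\lambda)$. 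This is the step that encodes the nonlocal structure faithfully into the analytic pencil.

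Next I would invoke Lemma~\ref{lSpectrum}: $\tilde\cL(\lambda)$ is Fredholm of index zero for every $\lambda$, its spectrum is discrete, and only finitely many eigenvalues $\lambda_1,\dots,\lambda_{n_0}$ lie in the open strip $a'-1<\Im\lambda<a-1$. By the analytic Fredholm theorem, $\tilde\cL(\lambda)^{-1}$ is meromorphic in this strip with poles precisely at the $\lambda_n$, so $\tilde U(\lambda,\cdot)=\tilde\cL(\lambda)^{-1}\{\tilde F,\tilde\Phi\}$ is meromorphic there. Recovering $U$ by the inverse Mellin transform along $\Im\lambda=a-1$ and shifting the contour down to the eigenvalue-free line $\Im\lambda=a'-1$, the residue theorem produces the integral along $\Im\lambda=a'-1$, which by Parseval is a solution $U'\in\cH_{a'}^{2m}(K)$, plus the finite sum of residues of $r^{i\lambda}\tilde U(\lambda,\cdot)$ at the poles $\lambda_n$.

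The final step identifies these residues with the stated power-log terms. By the Keldysh theory of operator pencils, the principal part of the Laurent expansion of $\tilde\cL(\lambda)^{-1}$ at an eigenvalue $\lambda_n$ is organized by a canonical system of Jordan chains $\{\varphi_n^{(0,q)},\dots,\varphi_n^{(\varkappa_{qn}-1,q)}\}$, and evaluating the residue of $r^{i\lambda}$ against this principal part yields precisely the functions $W_n^{(l_0,q)}(\omega,r)=r^{i\lambda_n}\sum_{l=0}^{l_0}\frac1{l!}(i\ln r)^l\varphi_n^{(l_0-l,q)}(\omega)$, which are power solutions of the homogeneous problem by Lemma~\ref{lAppL2.1GurPetr03}. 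The main obstacle is twofold: justifying the contour shift requires a priori estimates for $\tilde\cL(\lambda)^{-1}$ that are uniform as $|\Re\lambda|\to\infty$ along horizontal segments (these follow from the ellipticity of the pencil off the real axis and force the horizontal contributions to vanish in the limit), and matching the residues to the \emph{canonical} Jordan chains requires the precise Keldysh decomposition rather than a mere pole count. The reduction to the pencil and the residue bookkeeping are otherwise routine.
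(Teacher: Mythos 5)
The paper itself does not prove this lemma at all: it is imported verbatim (``see Theorem~2.2 and Remark~2.2 in~\cite{GurPetr03}''), and your argument --- polar coordinates and Mellin transform reducing~\eqref{eqAppCalPBinK} to the pencil equation for $\tilde{\mathcal L}(\lambda)$, meromorphic continuation of $\tilde U$ into the strip via the analytic Fredholm property and discreteness of the spectrum (Lemma~\ref{lSpectrum}), a contour shift to the eigenvalue-free line, and identification of the residues with the canonical Jordan-chain power solutions by Gohberg--Sigal/Keldysh theory --- is precisely the standard Kondrat'ev-type proof used in that source and in~\cite{SkDu90}. Your sketch also correctly flags the two real technical burdens, with the one caveat that the uniform estimates on $\tilde{\mathcal L}(\lambda)^{-1}$ as $|\Re\lambda|\to\infty$ in the strip do not follow from ellipticity alone for this \emph{nonlocal} pencil: they additionally use the geometric condition $|(-1)^\sigma\omega_j+\omega_{j\sigma ks}|<\omega_k$, which places the shifted angular arguments strictly inside the angles and makes the transformed nonlocal terms subordinate for large $|\Re\lambda|$.
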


If the right-hand sides of problem~\eqref{eqAppCalPBinK} are of particular form, then
there exist solutions of particular form. Let
\begin{equation}\label{eqAppL4.3GurPetr03}
  F_j(\omega, r)=r^{i\lambda_0-2m}\sum\limits_{l=0}^M\frac{\displaystyle 1}{\displaystyle l!}(i\ln
  r)^l
  f_j^{(l)}(\omega),\quad
\Phi_{j\sigma \mu}(r)=r^{i\lambda_0-m_{j\sigma\mu}}
\sum\limits_{l=0}^M\frac{\displaystyle 1}{\displaystyle l!}(i\ln r)^l
\varphi_{j\sigma\mu}^{(l)},
\end{equation}
where $
  f_j^{(l)}\in L^2(-\omega_j,\omega_j),\ \varphi_{j\sigma\mu}^{(l)}\in{\mathbb
  C},\  \lambda_0\in\mathbb C.
$

If~$\lambda_0$ is an eigenvalue of the operator~$\tilde{\cal L}(\lambda)$, then denote
by~$\varkappa(\lambda_0)$ the greatest of partial multiplicities of this eigenvalue;
otherwise, set $\varkappa(\lambda_0)=0$.
\begin{lemma}[see Lemma~4.3 in~\cite{GurPetr03}]\label{lAppL4.3GurPetr03}
For problem~\eqref{eqAppCalPBinK} with right-hand side~$\{F_j,\Phi_{j\sigma\mu}\}$ given
by~\eqref{eqAppL4.3GurPetr03}, there exists a solution
 \begin{equation}\label{eqAppL4.3GurPetr03'}
U= r^{i\lambda_0}\sum\limits_{l=0}^{M+\varkappa(\lambda_0)} \frac{\displaystyle
1}{\displaystyle l!}(i\ln r)^l u^{(l)}(\omega),
 \end{equation}
where $u^{(l)}\in \prod\limits_j W^{2m}(-\omega_j,\omega_j)$. A solution of such a form
is unique if~$\varkappa(\lambda_0)=0$ {\rm(}i.e., $\lambda_0$ is not an eigenvalue
of~$\tilde{\cal L}(\lambda)${\rm)}. If~$\varkappa(\lambda_0)>0$, then the
solution~\eqref{eqAppL4.3GurPetr03'} is defined accurate to an arbitrary linear
combination of power solutions~\eqref{eqAppL2.1GurPetr03} corresponding to the
eigenvalue~$\lambda_0$.
\end{lemma}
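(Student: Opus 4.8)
The plan is to substitute the power-logarithmic ansatz~\eqref{eqAppL4.3GurPetr03'} directly into the model problem~\eqref{eqAppCalPBinK} and to reduce the differential problem to a finite triangular system of equations governed by the operator pencil $\tilde\cL(\lambda)$ near $\lambda=\lambda_0$. The existence of a solution of the prescribed form will then follow from the Fredholm property of $\tilde\cL$ together with the Jordan structure of this analytic operator function at $\lambda_0$.

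First I would record the action of the homogeneous operators on power-logarithmic functions. Writing $v=i\ln r$ and using $rD_r=-ir\,\partial_r$, a direct computation yields the exact identity
\[
 rD_r\big(r^{i\lambda}\Phi(v,\omega)\big)=r^{i\lambda}(\lambda+\partial_v)\Phi,
\]
so that $p(rD_r)\big(r^{i\lambda}\Phi\big)=r^{i\lambda}p(\lambda+\partial_v)\Phi$ for every polynomial $p$. Since $\cP_j(D_y)=r^{-2m}\tilde\cP_j(\omega,D_\omega,rD_r)$, this gives
\[
 \cP_j(D_y)\big(r^{i\lambda_0}\Phi_j\big)=r^{i\lambda_0-2m}\,\tilde\cP_j(\omega,D_\omega,\lambda_0+\partial_v)\Phi_j .
\]
For the boundary operators the rotation--homothety $\cG_{j\sigma ks}$ sends $r\mapsto\chi_{j\sigma ks}r$, $\omega\mapsto\omega+\omega_{j\sigma ks}$, hence $v\mapsto v+i\ln\chi_{j\sigma ks}$; together with the prefactor this produces precisely $\chi_{j\sigma ks}^{\,i(\lambda_0+\partial_v)-m_{j\sigma\mu}}$, which is exactly the factor $(\chi_{j\sigma ks})^{i\lambda-m_{j\sigma\mu}}$ built into the definition of $\tilde\cB_{j\sigma\mu}$ under the substitution $\lambda\mapsto\lambda_0+\partial_v$. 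Consequently, substituting $U$ of the form~\eqref{eqAppL4.3GurPetr03'} into~\eqref{eqAppCalPBinK} produces on the left-hand sides exactly $r^{i\lambda_0-2m}$ (respectively $r^{i\lambda_0-m_{j\sigma\mu}}$) times $\tilde\cL(\lambda_0+\partial_v)$ applied to the polynomial $\sum_{l}\tfrac{1}{l!}v^lu^{(l)}(\omega)$, where $\partial_v$ acts as the degree-lowering map $\tfrac{1}{l!}v^l\mapsto\tfrac{1}{(l-1)!}v^{l-1}$.

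Next I would expand $\tilde\cL(\lambda_0+\partial_v)$ by Taylor's formula in the spectral parameter (a finite sum, since $\tilde\cL$ is polynomial of degree $2m$ in its last argument) and match the coefficients of $\tfrac{1}{l!}v^l$ against the right-hand side~\eqref{eqAppL4.3GurPetr03}. This turns~\eqref{eqAppCalPBinK} into the triangular system
\[
 \sum_{p\ge0}\frac{1}{p!}\,\frac{d^p\tilde\cL}{d\lambda^p}(\lambda_0)\,u^{(l+p)}=g^{(l)},\qquad l=L,L-1,\dots,0,
\]
where $L=M+\varkappa(\lambda_0)$, the data are $g^{(l)}=\{f_j^{(l)},\varphi_{j\sigma\mu}^{(l)}\}$ for $l\le M$ and $g^{(l)}=0$ for $l>M$, and $u^{(l)}=0$ for $l>L$. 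I would then solve this system by downward induction on $l$. For $l>M$ the data vanish and the resulting equations $\tilde\cL(\lambda_0)u^{(L)}=0$, $\tilde\cL(\lambda_0)u^{(L-1)}=-\tilde\cL'(\lambda_0)u^{(L)}$, $\dots$ are exactly the defining relations of a Jordan chain of the analytic operator function $\tilde\cL$ at $\lambda_0$; thus the uppermost $\varkappa(\lambda_0)$ coefficients are eigen- and associated vectors, while the sources $g^{(l)}$ enter only in the lowest $M+1$ levels.

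The main obstacle is the solvability of this recursion: at each level one must invert $\tilde\cL(\lambda_0)$ modulo its finite-dimensional kernel and cokernel (legitimate by the Fredholm property of Lemma~\ref{lSpectrum}), and the already-chosen higher coefficients must render the right-hand side orthogonal to $\ker(\tilde\cL(\lambda_0))^{*}$. That exactly $L=M+\varkappa(\lambda_0)$ logarithmic terms suffice is the quantitative heart of the argument; the cleanest way to see it is to note that, by the Keldysh--Gohberg--Sigal theory of analytic operator functions (cf.~\cite{GS}), the principal part of $\tilde\cL(\lambda)^{-1}$ at $\lambda_0$ has order equal to the maximal partial multiplicity $\varkappa(\lambda_0)$. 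Equivalently, representing the solution as the residue at $\lambda_0$ of $r^{i\lambda}\tilde\cL(\lambda)^{-1}$ applied to the Mellin data (whose pole at $\lambda_0$ has order $M+1$) shows that the product has a pole of order $M+1+\varkappa(\lambda_0)$, so that the residue is $r^{i\lambda_0}$ times a polynomial of degree $M+\varkappa(\lambda_0)$ in $\ln r$, which is precisely~\eqref{eqAppL4.3GurPetr03'}. Finally, when $\varkappa(\lambda_0)=0$ the operator $\tilde\cL(\lambda_0)$ is invertible, the induction determines each $u^{(l)}$ uniquely, and the solution is unique; when $\varkappa(\lambda_0)>0$ the homogeneous solutions of the recursion are precisely the Jordan chains, that is, the power solutions~\eqref{eqAppL2.1GurPetr03} of Lemma~\ref{lAppL2.1GurPetr03}, so the solution is determined up to an arbitrary linear combination of these, as claimed.
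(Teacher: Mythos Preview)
The paper does not prove this lemma here: it is quoted in the Appendix as Lemma~4.3 of~\cite{GurPetr03}, so there is no in-paper proof to compare against. Your sketch is the standard argument for such results and is essentially correct: the substitution identity $rD_r\big(r^{i\lambda}\Phi(v,\omega)\big)=r^{i\lambda}(\lambda+\partial_v)\Phi$ with $v=i\ln r$ is right, and it reduces~\eqref{eqAppCalPBinK} with data~\eqref{eqAppL4.3GurPetr03} to the triangular system $\sum_{p\ge0}\tfrac{1}{p!}\tilde\cL^{(p)}(\lambda_0)u^{(l+p)}=g^{(l)}$, whose top $\varkappa(\lambda_0)$ equations are precisely the Jordan--chain relations. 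Your Keldysh--Gohberg--Sigal residue argument for why exactly $M+\varkappa(\lambda_0)$ logarithmic terms suffice (the principal part of $\tilde\cL(\lambda)^{-1}$ at $\lambda_0$ has order $\varkappa(\lambda_0)$, the Mellin data have a pole of order $M+1$) is the clean way to close the count, and the identification of the homogeneous solutions with the power solutions of Lemma~\ref{lAppL2.1GurPetr03} gives the uniqueness statements. The one place where your write-up is only a sketch is the downward induction itself: you assert that the freedom in the higher coefficients (elements of $\ker\tilde\cL(\lambda_0)$) can always be spent to make the next right-hand side orthogonal to $\ker\tilde\cL(\lambda_0)^{*}$, but you do not verify this directly---you instead invoke the residue construction, which bypasses the issue by producing the solution globally. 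That is fine, but if you want a self-contained inductive proof you would need the nondegeneracy result (as in Lemma~3.2 of~\cite{GurPetr03}, used in Remark~\ref{remMonomialSolution}) that the pairing $\langle\tilde\cL'(\lambda_0)\varphi,\psi\rangle$ between eigenvectors and adjoint eigenvectors is nondegenerate modulo the associated vectors, iterated up the Jordan ladder.
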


The following result is a modification of Lemma~\ref{lAppTh2.2GurPetr03} for the case in
which the line $\Im\lambda=1-2m$ contains the unique eigenvalue $\lambda_0=i(1-2m)$ of
$\tilde{\cal L}(\lambda)$ and this eigenvalue is proper (see
Definition~\ref{defRegEigVal}).

\begin{lemma}[see Lemma~3.4 in~\cite{GurRJMP03}]\label{lAppL3.4GurRJMP03}
Let $U\in \mathcal H_a^{2m}(K)$, where $a>0$, be a solution of
problem~\eqref{eqAppCalPBinK} with right-hand side $\{F_j\}\in \mathcal H_a^0(K)\cap
\mathcal H_0^0(K)$, $\{\Phi_{j\sigma\mu}\}\in\mathcal H_a^{\bm}(\gamma)\cap\mathcal
H_0^{\bm}(\gamma)$. Suppose that the  band $1-2m\le\Im\lambda< a+1-2m$ contains only the
eigenvalue $\lambda_0=i(1-2m)$ of $\tilde{\mathcal L}(\lambda)$ and this eigenvalue is
proper. Then $D^\alpha U\in \mathcal H_0^0(K)$ for $|\alpha|=2m$.
\end{lemma}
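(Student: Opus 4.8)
The plan is to study $U$ near the vertex of $K$ through the Mellin transform in the radial variable $r$, which turns the model problem~\eqref{eqAppCalPBinK} into the operator pencil $\tilde\cL(\lambda)$. This lemma is the borderline counterpart of Lemma~\ref{lAppTh2.2GurPetr03}: there one extracts the power solutions for eigenvalues in an open band and leaves a remainder in a smaller-weight space, under the assumption that the lower line $\Im\lambda=a'-1$ carries no spectrum. Here that hypothesis fails, because the single relevant eigenvalue $\lambda_0=i(1-2m)$ sits exactly on the target line $\Im\lambda=1-2m$ (the weight-$0$ threshold), so the Mellin argument has to be pushed one step further. Writing $\hat U(\lambda,\omega)$ for the transform of $U$ and $\cF=\{F_j,\Phi_{j\sigma\mu}\}$ for the data, the equations give $\tilde\cL(\lambda)\hat U(\lambda,\cdot)=\hat\cF(\lambda,\cdot)$, so $\hat U(\lambda,\cdot)=\tilde\cL(\lambda)^{-1}\hat\cF(\lambda,\cdot)$ is meromorphic with poles confined to the spectrum of $\tilde\cL(\lambda)$. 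Crucially, the hypothesis that the right-hand sides lie in the weight-$0$ spaces $\cH_0^0(K)$ and $\cH_0^{\bm}(\gamma)$ is, via Mellin--Parseval, exactly the statement that $\hat\cF(\lambda,\cdot)$ is square integrable on the critical line $\Im\lambda=1-2m$ (and, by $\cF\in\cH_a^0(K)\cap\cH_a^{\bm}(\gamma)$, on all lines up to $\Im\lambda=a+1-2m$).

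First I would represent $U$ as the inverse-Mellin integral along a line in $\Im\lambda<a+1-2m$, where $U\in\cH_a^{2m}(K)$ is controlled, and push the contour down to the critical line $\Im\lambda=1-2m$. By Lemma~\ref{lSpectrum} the spectrum is discrete and only finitely many eigenvalues are met, and by hypothesis the only one in the band $1-2m\le\Im\lambda<a+1-2m$ is $\lambda_0$, lying on the target line. The residue at $\lambda_0$ yields a decomposition $U=P+U'$, where $P$ collects the power solutions at $\lambda_0$ and $U'$ is the principal-value inverse-Mellin integral over $\Im\lambda=1-2m$. Since $\lambda_0$ is proper (Definition~\ref{defRegEigVal}), no eigenvector has an associated vector, so the pole of $\tilde\cL(\lambda)^{-1}$ at $\lambda_0$ is simple and $P$ carries no $\ln r$ factors; moreover, by the definition of properness together with Lemma~\ref{lAppL2.1GurPetr03}, the components of $P$ are homogeneous polynomials of degree $i\lambda_0=2m-1$. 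That these are precisely the $W^{2m}$-admissible borderline terms is confirmed by Lemma~\ref{lAppL4.20Kondr} with $k=2m$ and $\Im\lambda_0=-(2m-1)$.

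Because $P$ is a polynomial vector of degree $2m-1$, we have $D^\alpha P=0$ for $|\alpha|=2m$, so it suffices to prove $D^\alpha U'\in\cH_0^0(K)$ for $|\alpha|=2m$. I would obtain this from a Parseval estimate on the critical line: applying $D^\alpha$ amounts, in Mellin variables, to multiplying $\hat U'(\lambda,\cdot)=\tilde\cL(\lambda)^{-1}\hat\cF(\lambda,\cdot)$ by a symbol $M_\alpha(\lambda)$ of order $2m$ (growing like $|\lambda|^{2m}$), so that $\int_K|D^\alpha U'|^2\,dy$ is comparable to $\int_{\Im\lambda=1-2m}\big\|M_\alpha(\lambda)\,\tilde\cL(\lambda)^{-1}\hat\cF(\lambda,\cdot)\big\|^2\,d\Re\lambda$. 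Since $\hat\cF(\lambda,\cdot)$ is square integrable on this line, finiteness follows once one has the uniform bound $\|M_\alpha(\lambda)\,\tilde\cL(\lambda)^{-1}\|\le C$ along $\Im\lambda=1-2m$, away from $\lambda_0$ and for large $|\Re\lambda|$, together with boundedness of the principal-value integrand near $\lambda_0$ itself.

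The hard part will be exactly this last bound, on a line that carries a point of the spectrum. This is where properness is used decisively and twice: the simplicity of the pole makes the principal-value integral well defined and its integrand bounded near $\lambda_0$, while the polynomiality of the residue guarantees that the genuinely singular part $r^{2m-1}\varphi(\omega)$ has been absorbed into $P$ and is annihilated by every $D^\alpha$, $|\alpha|=2m$. Were $\lambda_0$ improper — a true Jordan chain or a non-polynomial power solution — the residue would contain terms $r^{2m-1}(\ln r)^l\varphi(\omega)$ with nonzero $2m$-th derivatives, and the contour integral would fail to be square integrable after differentiation; this is precisely the smoothness-violation mechanism of Theorem~\ref{thNonSmoothu}. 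A routine final step, invoking Lemma~\ref{lAppL2.3GurMatZam05} for the behavior away from the vertex, patches these local conclusions into $D^\alpha U\in\cH_0^0(K)$ for $|\alpha|=2m$.
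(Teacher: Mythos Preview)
The paper does not prove this lemma; it is quoted in the Appendix as a known result from~\cite{GurRJMP03} (Lemma~3.4 there), so there is no in-paper argument to compare against. Your Mellin-transform outline is the standard Kondrat'ev-type route and is essentially correct, so what follows is only a sharpening of one point.

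Your sentence ``the simplicity of the pole makes the principal-value integral well defined and its integrand bounded near $\lambda_0$'' runs two different things together. The integrand $\tilde\cL(\lambda)^{-1}\hat\cF(\lambda,\cdot)$ defining $U'$ still has a simple pole at $\lambda_0$; the principal value exists, but the integrand is certainly not bounded there. What actually makes the Parseval estimate for $D^\alpha U$ go through is a cancellation you use implicitly but do not state: because the eigenvalue is proper, every eigenvector $\varphi$ satisfies $D^\alpha\bigl(r^{2m-1}\varphi(\omega)\bigr)=0$ for $|\alpha|=2m$, which in Mellin variables says that the symbol $M_\alpha(\omega,D_\omega,\lambda_0)$ annihilates $\ker\tilde\cL(\lambda_0)$. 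Since the pole of $\tilde\cL(\lambda)^{-1}$ is simple and its residue has range exactly in $\ker\tilde\cL(\lambda_0)$, the composed symbol $M_\alpha(\lambda)\,\tilde\cL(\lambda)^{-1}$ is \emph{holomorphic} at $\lambda_0$, not merely principal-value integrable. Once you phrase it this way, the decomposition $U=P+U'$ is unnecessary: one checks directly that $M_\alpha(\lambda)\,\tilde\cL(\lambda)^{-1}$ is bounded on the whole line $\Im\lambda=1-2m$ (the large-$|\Re\lambda|$ bound is the usual parameter-elliptic estimate), and Parseval gives $D^\alpha U\in\cH_0^0(K)$. Your version reaches the same conclusion because $D^\alpha P=0$, but the mechanism is this cancellation of the pole by $M_\alpha$, not boundedness of the $U'$-integrand itself.
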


\subsection{Nonlocal Problems in Plane Angles in Sobolev Spaces}

\begin{lemma}[see Lemma~2.4 and Corollary~2.1 in~\cite{GurRJMP03}]\label{lAppL2.4GurRJMP03}
Suppose~the\linebreak line~$\Im\lambda=1-2m$ contains no eigenvalues of $\tilde{\cal
L}(\lambda)$. Let
$$
\{\Phi_{j\sigma\mu}\}\in\mathcal W^{\bm}(\gamma^\varepsilon)\cap\mathcal
H_\delta^{\bm}(\gamma^\varepsilon)\qquad\forall\delta>0.
$$
Then there exists a compactly supported function $ V\in\mathcal W^{2m}(K)\cap\mathcal
H_{\delta}^{2m}(K), $ where $\delta>0$ is arbitrary, such that
$$
\{{\mathbf P}_{j}(y,D_y)V_j\}\in\mathcal H_0^0(K^\varepsilon),\qquad \{{\mathbf
B}_{j\sigma\mu}(y,D_y)V|_{\gamma_{j\sigma}^\varepsilon}-\Phi_{j\sigma\mu}\}\in \mathcal
H_0^{\bm}(\gamma^\varepsilon).
$$
\end{lemma}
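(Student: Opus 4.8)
The plan is to reduce the variable-coefficient problem to the homogeneous (frozen) model problem~\eqref{eqAppCalPBinK}, solve the latter exactly in the weighted scale using the absence of eigenvalues on $\Im\lambda=1-2m$, and then transfer back, the Taylor remainders being one weight order better and hence harmless. \emph{Preliminaries.} By Lemma~\ref{lSpectrum} and the hypothesis that $\Im\lambda=1-2m$ carries no eigenvalue, I would first fix $\delta_0$ with $0<\delta_0<1$ so that the closed strip $1-2m\le\Im\lambda\le 1-2m+\delta_0$ contains no eigenvalue of $\tilde\cL(\lambda)$. Since $\{\Phi_{j\sigma\mu}\}\in\cH_\delta^\bm(\gamma^\varepsilon)$ for every $\delta>0$, a direct comparison of powers (cf.\ Lemma~\ref{lAppL2.1GurRJMP03}) shows that $r^\beta\notin H_\delta^{2m-m_{j\sigma\mu}-1/2}$ for small $\delta$ whenever $\beta\le 2m-m_{j\sigma\mu}-2$; hence the hypothesis forces $\partial^\beta\Phi_{j\sigma\mu}/\partial\tau_{j\sigma}^\beta(0)=0$ for $\beta\le 2m-m_{j\sigma\mu}-2$. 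After multiplying by a cut-off I may also assume each $\Phi_{j\sigma\mu}$ is supported in $\cO_\varepsilon(0)$ and extend it by zero to the full ray $\gamma_{j\sigma}$.

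\emph{Solving the frozen model problem.} I would seek $V$ with $\cP_j(D_y)V_j=0$ on $K_j$ and $\cB_{j\sigma\mu}(D_y)V=\Phi_{j\sigma\mu}$ on $\gamma_{j\sigma}$, that is, problem~\eqref{eqAppCalPBinK} with $F_j=0$. The weight $a=\delta_0$ corresponds to the Mellin line $\Im\lambda=\delta_0+1-2m$, which is eigenvalue-free by the choice of $\delta_0$; consequently the model operator is an isomorphism at that weight and yields a solution $V\in\cH_{\delta_0}^{2m}(K)$. Running the construction at every weight $\delta\in(0,\delta_0]$ (all the corresponding lines lying in the eigenvalue-free strip) and invoking uniqueness shows that the same $V$ lies in $\cH_\delta^{2m}(K)$ for all such $\delta$, hence for all $\delta>0$ (for $\delta\ge\delta_0$ by inclusion). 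The same data, being in $\cW^\bm(\gamma^\varepsilon)$ as well, can be treated in the Sobolev scale — the polynomial borderline terms on $\Im\lambda=1-2m$ being classified by Lemma~\ref{lAppL4.20Kondr}, and the admissible power–log terms located by the structural Lemmas~\ref{lAppL4.3GurPetr03} and~\ref{lAppTh2.2GurPetr03} — giving a $\cW^{2m}(K)$-solution with identical data; by uniqueness the two coincide, so $V\in\cW^{2m}(K)\cap\cH_\delta^{2m}(K)$ for every $\delta>0$. Multiplying $V$ by a cut-off equal to one near the origin preserves all these properties and makes $V$ compactly supported.

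\emph{Transfer to the variable-coefficient operators.} It remains to examine the two residuals. Since $\cP_jV_j=0$, I have $\mathbf P_j(y,D_y)V_j=(\mathbf P_j-\cP_j)(y,D_y)V_j$, and the coefficients of $\mathbf P_j-\cP_j$ either vanish at the origin (the principal part) or stand in front of lower-order derivatives; by Lemma~\ref{lAppL3.3'Kondr} together with the embedding $H_a^k\subset H_{a-1}^{k-1}$ this gains one weight order, so $\{\mathbf P_j(y,D_y)V_j\}\in\cH_{\delta_0-1}^0(K^\varepsilon)\subset\cH_0^0(K^\varepsilon)$, using $\delta_0<1$. Applying the same Taylor expansion to $\mathbf B_{j\sigma\mu}-\cB_{j\sigma\mu}$ and taking traces gives $\{\mathbf B_{j\sigma\mu}(y,D_y)V-\Phi_{j\sigma\mu}\}=\{(\mathbf B_{j\sigma\mu}-\cB_{j\sigma\mu})(y,D_y)V\}\in\cH_{\delta_0-1}^\bm(\gamma^\varepsilon)\subset\cH_0^\bm(\gamma^\varepsilon)$, which is exactly the required conclusion.

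\emph{Main obstacle.} The delicate point is the simultaneous control of $V$ in the two scales $\cW^{2m}$ and $\cH_\delta^{2m}$ while forcing the residuals into the \emph{strongest} space $\cH_0$ rather than merely $\cH_\delta$. This amounts to reproducing the borderline (critical-line) content of $\Phi$ — the part lying in $\bigcap_{\delta>0}\cH_\delta^\bm$ but not in $\cH_0^\bm$ — by the image $\mathbf B V$, and it is precisely the absence of eigenvalues on $\Im\lambda=1-2m$ that makes $\tilde\cL(\lambda)$ invertible along that line and so permits this matching. A further technical burden is that the lifting and the frozen solve must be performed for the fully coupled $N$-component system, whose couplings act through the rotation–homothety maps $\mathcal G_{j\sigma ks}$ carrying each ray strictly into the interior of the neighbouring angles; there one must check that the interior evaluations of $V_k\in\cH_{\delta_0}^{2m}$ retain the one-order weight gain near the vertex so that the transfer step above indeed closes.
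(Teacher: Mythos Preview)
The paper does not prove this lemma; it is quoted verbatim in the Appendix with the attribution ``see Lemma~2.4 and Corollary~2.1 in~\cite{GurRJMP03}'', so there is no in-paper proof to compare your proposal against. Your sketch is therefore being measured against the statement itself rather than against a given argument.

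That said, your outline has a genuine gap at the step where you pass from the weighted solution to the Sobolev one. You correctly obtain $V\in\cH_\delta^{2m}(K)$ for every $\delta>0$ by solving the frozen problem in the eigenvalue-free strip, but the assertion that ``the same data \dots\ can be treated in the Sobolev scale \dots\ giving a $\cW^{2m}(K)$-solution with identical data; by uniqueness the two coincide'' is exactly the content of the lemma you are trying to prove. Membership in $\bigcap_{\delta>0}\cH_\delta^{2m}$ does not by itself force membership in $\cW^{2m}$: for instance, $r^{2m-1}(\ln r)\varphi(\omega)$ lies in every $\cH_\delta^{2m}$ with $\delta>0$ but, by Lemma~\ref{lAppL4.20Kondr}, is not in $W^{2m}$ near the vertex. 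Nor does your uniqueness argument close: a putative $\cW^{2m}$ solution $V'$ is a priori only in $\cH_{2m}^{2m}$, so $V-V'$ solves the homogeneous model problem in $\cH_{2m}^{2m}$ and Lemma~\ref{lAppTh2.2GurPetr03} then allows nontrivial power solutions corresponding to eigenvalues in the strip $1-2m<\Im\lambda<1$, which you have not ruled out.

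The actual argument in~\cite{GurRJMP03} (visible in how the companion Lemma~\ref{lAppL3.3GurRJMP03} is used in the proof of Lemma~\ref{lUnonSmoothZnonConsist}) proceeds differently: one approximates $\{\Phi_{j\sigma\mu}\}$ by data vanishing near the origin, solves for each approximant simultaneously in $\cW^{2m}$ and $\cH_\delta^{2m}$ (where both memberships are trivial since the data are supported away from the vertex), and then passes to the limit using a priori estimates that are uniform precisely because the critical line $\Im\lambda=1-2m$ is eigenvalue-free. Your transfer step from the frozen to the variable-coefficient operators via Lemma~\ref{lAppL3.3'Kondr} is correct and is indeed the mechanism by which one lands in $\cH_0$ rather than merely $\cH_\delta$, but it only becomes relevant once the simultaneous $\cW^{2m}\cap\cH_\delta^{2m}$ membership of $V$ has been established by other means.
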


Now we consider the situation where the line $\Im\lambda=1-2m$ contains the unique
eigenvalue $\lambda_0=i(1-2m)$ of $\tilde{\cal L}(\lambda)$ and it is proper (see
Definition~\ref{defRegEigVal}).

\begin{lemma}[see Lemma~3.3 and Corollary~3.1 in~\cite{GurRJMP03}]\label{lAppL3.3GurRJMP03}
Let the line\linebreak $\Im\lambda=1-2m$ contain only the unique eigenvalue
$\lambda_0=i(1-2m)$ of $\tilde{\cal L}(\lambda)$ and it is proper. Suppose that
$$
\{\Phi_{j\sigma\mu}\}\in\mathcal W^{\bm}(\gamma^\varepsilon)\cap\mathcal
H_\delta^{\bm}(\gamma^\varepsilon)\qquad\forall\delta>0
$$
and the functions $\Phi_{j\sigma\mu}$ satisfy the consistency
condition~\eqref{eqConsistencyZ}. Then there exists a compactly supported function $
V\in\mathcal  W^{2m}(K)\cap\mathcal H_{\delta}^{2m}(K), $ where $\delta>0$ is  arbitrary,
such that
$$
\{{\mathbf P}_{j}(y,D_y)V_j\}\in\mathcal H_0^0(K^\varepsilon),\qquad \{{\mathbf
B}_{j\sigma\mu}(y,D_y)V|_{\gamma_{j\sigma}^\varepsilon}-\Phi_{j\sigma\mu}\}\in \mathcal
H_0^{\bm}(\gamma^\varepsilon).
$$
\end{lemma}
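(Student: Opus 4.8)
The plan is to mirror the construction behind Lemma~\ref{lAppL2.4GurRJMP03}, valid when the line $\Im\lambda=1-2m$ carries no eigenvalue, and to isolate the single new contribution produced by the proper eigenvalue $\lambda_0=i(1-2m)$. First I would use the hypothesis $\{\Phi_{j\sigma\mu}\}\in\cH_\delta^{\bm}(\gamma^\varepsilon)$ for all $\delta>0$ to reduce the data. Writing the Taylor expansion of each component $\Phi_{j\sigma\mu}\in W^{\mjsigma}(\gamma_{j\sigma}^\varepsilon)$ at $r=0$, a monomial $r^\beta$ belongs to $H_\delta^{\mjsigma}(\gamma_{j\sigma}^\varepsilon)$ for \emph{every} $\delta>0$ only when $\beta\ge 2m-m_{j\sigma\mu}-1$; hence all subcritical monomials ($\beta\le 2m-m_{j\sigma\mu}-2$) are automatically absent, and, up to a remainder that lies in $\cH_0^{\bm}(\gamma^\varepsilon)$ by the Kondrat'ev embedding (as in Remark~\ref{remSufficCons}), only the single critical monomial survives: $\Phi_{j\sigma\mu}=c_{j\sigma\mu}\,r^{2m-m_{j\sigma\mu}-1}+(\cH_0\text{-remainder})$. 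Since a remainder already in $\cH_0$ needs no correction (it goes straight into the final error), it suffices to construct a compactly supported $V\in\cW^{2m}(K)\cap\cH_\delta^{2m}(K)$ with $\{\bP_j(y,D_y)V_j\}\in\cH_0^0(K^\varepsilon)$ and $\{\bB_{j\sigma\mu}(y,D_y)V-c_{j\sigma\mu}r^{2m-m_{j\sigma\mu}-1}\}\in\cH_0^{\bm}(\gamma^\varepsilon)$.

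For the critical part I would first solve the frozen, homogeneous model problem $\cP_j(D_y)V=0$, $\cB_{j\sigma\mu}(D_y)V=c_{j\sigma\mu}r^{2m-m_{j\sigma\mu}-1}$. By Lemma~\ref{lAppL4.3GurPetr03} (applied with $i\lambda_0=2m-1$) this admits a solution of power--log form $V=r^{2m-1}\sum_{l}(i\ln r)^l u^{(l)}(\omega)$, the number of logarithms being controlled by the partial multiplicity of $\lambda_0$, which equals one since properness (Definition~\ref{defRegEigVal}) forbids associated vectors. The decisive point is that, under the consistency condition~\eqref{eqConsistencyZ}, this solution can be chosen logarithm-free, hence a genuine homogeneous polynomial of degree $2m-1$ in $y_1,y_2$; by Lemma~\ref{lAppL4.20Kondr} such a polynomial is locally of class $W^{2m}$ near the vertex, whereas any surviving $r^{2m-1}\ln r$ term would not be. Properness also guarantees that the eigenvectors $r^{2m-1}\varphi^{(q)}(\omega)$ are themselves homogeneous polynomials, so the only obstruction to a polynomial lift is the pairing of the right-hand side with $\ker(\tilde\cL(\lambda_0))^*$. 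Applying the tangential derivative $\partial^{2m-m_{j\sigma\mu}-1}/\partial\tau_{j\sigma}^{2m-m_{j\sigma\mu}-1}$ to the boundary relations turns $\cB_{j\sigma\mu}$ into the order-$(2m-1)$ operators $\hat\cB_{j\sigma\mu}$ of~\eqref{eqSystemB}; the linear-dependence relations~\eqref{eqBviaB'} among these operators are dual to $\ker(\tilde\cL(\lambda_0))^*$, and a direct computation identifies the obstruction functional with the combination appearing under the integral in~\eqref{eqConsistencyZ}. Thus the consistency condition is precisely the solvability condition for a polynomial lift.

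Having produced a homogeneous polynomial $V^0$ of degree $2m-1$ solving the frozen-coefficient critical problem, I would restore the variable coefficients and cut off. Expanding the coefficients of $\bP_j$ and $\bB_{j\sigma\mu}$ by Taylor's formula about the vertex and using Lemma~\ref{lAppL3.3'Kondr} to absorb the vanishing factors, the differences $\{\bP_j(y,D_y)V^0_j-\cP_j(D_y)V^0_j\}$ and $\{\bB_{j\sigma\mu}(y,D_y)V^0-\cB_{j\sigma\mu}(D_y)V^0\}$ gain one extra power of $r$ and therefore land in $\cH_0^0(K^\varepsilon)$ and $\cH_0^{\bm}(\gamma^\varepsilon)$, respectively. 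Multiplying $V^0$ by a cutoff $\xi\in C_0^\infty(\cO_\varepsilon(0))$ equal to one near the origin makes $V=\xi V^0$ compactly supported; the commutator terms $[\bP,\xi]V^0$ and $[\bB_{j\sigma\mu},\xi]V^0$ are smooth and supported away from $r=0$, hence negligible in $\cH_0$, while near the vertex $V=V^0$ and the local $W^{2m}$ regularity from Lemma~\ref{lAppL4.20Kondr} together with compact support give $V\in\cW^{2m}(K)\cap\cH_\delta^{2m}(K)$ for every $\delta>0$. This delivers exactly the function claimed in the lemma.

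I expect the main obstacle to be the second paragraph: rigorously matching the solvability obstruction for the polynomial lift with the integral in the consistency condition~\eqref{eqConsistencyZ}. This requires (i) the nondegeneracy of the pairing between $\ker\tilde\cL(\lambda_0)$ and $\ker(\tilde\cL(\lambda_0))^*$ afforded by $\ind\tilde\cL(\lambda_0)=0$ (Lemma~\ref{lSpectrum}) and by properness, cf.\ Lemma~3.2 in~\cite{GurPetr03}, and (ii) a careful bookkeeping of how the tangential derivatives and the rotation/homothety factors $\omega_{j\sigma ks}$, $\chi_{j\sigma ks}$ enter $\hat\cB_{j\sigma\mu}$, so that the coefficients $\beta_{j\sigma\mu}^{j'\sigma'\mu'}$ of~\eqref{eqBviaB'} are exactly those weighting the terms inside~\eqref{eqConsistencyZ}. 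The remaining steps are the routine weighted-space estimates standard in this circle of results.
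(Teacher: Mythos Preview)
The paper does not supply a proof of this lemma: it is quoted from \cite{GurRJMP03} (Lemma~3.3 and Corollary~3.1 there) and listed in the appendix among ``known results \dots\ which are most frequently referred to in the main part of the paper.'' So there is no in-paper argument to compare against; your proposal has to be judged on its own.

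The serious gap is in your first paragraph, not the second. You assert the decomposition
\[
\Phi_{j\sigma\mu}=c_{j\sigma\mu}\,r^{2m-m_{j\sigma\mu}-1}+(\text{remainder in }\cH_0^{\bm}),
\]
but this is not available. Kondrat'ev's Lemma~4.11, as invoked in the proofs of Lemmas~\ref{lU=C+m2m} and~\ref{lU=C+}, only lets one subtract a polynomial of degree $\le 2m-m_{j\sigma\mu}-2$ to land in $\cH_\delta^{\bm}$ for $0<\delta<1$; the hypothesis $\{\Phi_{j\sigma\mu}\}\in\cH_\delta^{\bm}$ for all $\delta>0$ merely says that this subcritical polynomial is already zero. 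A general $W^{\mjsigma}$ function has no well-defined ``$(2m-m_{j\sigma\mu}-1)$-th Taylor coefficient,'' and subtracting \emph{any} monomial $c\,r^{2m-m_{j\sigma\mu}-1}$ does not force the remainder into $\cH_0^{\bm}$. (Remark~\ref{remSufficCons}, which you cite, goes the other way: membership in $\cH_0^{\bm}$ implies~\eqref{eqConsistencyZ}, not conversely.) If your reduction were valid, the integrand in~\eqref{eqConsistencyZ} would reduce to $r^{-1}$ times a fixed constant built from the $c_{j\sigma\mu}$, and the consistency condition would collapse to a finite linear system on those constants. That contradicts the genuinely infinite-dimensional nature of~\eqref{eqConsistencyZ}; compare the remark in Sec.~\ref{subsectNonHomogProb} that $\cS^{\bm}(\partial G)$ is \emph{not closed} in $\cW^{\bm}(\partial G)$. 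Your polynomial $V^0$ therefore corrects only a fictitious monomial part and leaves the actual data $\Phi_{j\sigma\mu}$ uncontrolled modulo $\cH_0$.

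Your second-paragraph intuition---that properness of $\lambda_0=i(1-2m)$ kills associated vectors and that~\eqref{eqConsistencyZ} is the obstruction dual to $\ker(\tilde\cL(\lambda_0))^*$ via the operators $\hat\cB_{j\sigma\mu}$ of~\eqref{eqSystemB}--\eqref{eqBviaB'}---is morally correct, but it must be implemented at the level of the full boundary data, not of a single extracted coefficient. A workable route (and the one suggested by how Lemma~\ref{lUnonSmoothZnonConsist} is proved, via a sequence $V^n$ coming from Lemma~3.5 in~\cite{GurRJMP03}) is to lift $\{\Phi_{j\sigma\mu}\}$ to a genuine $\cW^{2m}$ function, apply the asymptotic machinery (Lemmas~\ref{lAppTh2.2GurPetr03} and~\ref{lAppL4.3GurPetr03}) to this lift, and use~\eqref{eqConsistencyZ} precisely to ensure that the $r^{2m-1}\ln r$ term generated by the proper eigenvalue cancels, so that the resulting $V$ stays in $\cW^{2m}(K)\cap\cH_\delta^{2m}(K)$.
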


\begin{lemma}[see Lemma~3.1 in~\cite{GurRJMP03}]\label{lAppL3.1GurRJMP03}
Let the line $\Im\lambda=1-2m$ contain only the proper eigenvalue $\lambda_0=i(1-2m)$ of
$\tilde{\cal L}(\lambda)$. Suppose that $U\in\mathcal W^{2m}(K)$ is a compactly supported
solution of problem~\eqref{eqAppPBinK} {\rm(}or~\eqref{eqAppCalPBinK}{\rm)} and $D^\alpha
U(0)=0$, $|\alpha|\le 2m-2$. Then the functions $\Phi_{j\sigma\mu}$ satisfy the
consistency condition~\eqref{eqConsistencyZ}.
\end{lemma}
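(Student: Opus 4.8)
The plan is to isolate from $U$ the single ``resonant'' mode sitting on the line $\Im\lambda=1-2m$ and to show that it alone is responsible for the consistency condition, all other contributions being automatically consistent. First I would study $U$ near the vertex. Since $U\in\cW^{2m}(K)$ is compactly supported and $D^\alpha U(0)=0$ for $|\alpha|\le 2m-2$, a Kondratiev-type asymptotic decomposition yields $U=P+U_0$, where $P=(P_1,\dots,P_N)$ is a homogeneous polynomial vector of degree $2m-1$ and $U_0\in\cH_0^{2m}(K)$. The point is that $\cW^{2m}$-regularity, via Lemma~\ref{lAppL4.20Kondr}, forbids any asymptotic term $r^{i\lambda}(i\ln r)^s\varphi(\omega)$ with $\Im\lambda=1-2m$ unless it is a genuine homogeneous polynomial of degree $2m-1$ (in particular no logarithms survive), while terms with $\Im\lambda<1-2m$ are already absorbed into $\cH_0^{2m}(K)$; the vanishing of $D^\alpha U(0)$ for $|\alpha|\le 2m-2$ removes the polynomial terms of degree $\le 2m-2$, leaving $P$ homogeneous of degree exactly $2m-1$.

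Next I would dispose of $U_0$ and of the variable coefficients. Because $U_0\in\cH_0^{2m}(K)$, the traces $\bB_{j\sigma\mu}(y,D_y)U_0$ lie in $\cH_0^\bm(\gamma^\varepsilon)$, so by Remark~\ref{remSufficCons} they satisfy the consistency condition~\eqref{eqConsistencyZ}; the extra terms generated by expanding the coefficients of $\bB_{j\sigma\mu}(y,D_y)$ in a Taylor series at the vertex gain a factor vanishing at the origin and hence also land in $\cH_0^\bm(\gamma^\varepsilon)$ (Lemma~\ref{lAppL3.3'Kondr}). Since the combination entering~\eqref{eqConsistencyZ} is linear in its argument and $\Phi_{j\sigma\mu}=\bB_{j\sigma\mu}(y,D_y)U|_{\gamma_{j\sigma}^\varepsilon}$, it therefore suffices to verify~\eqref{eqConsistencyZ} for the contribution of the principal, constant-coefficient operators acting on $P$, i.e. for $\Phi_{j\sigma\mu}^P:=\cB_{j\sigma\mu}(D_y)P|_{\gamma_{j\sigma}}$.

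The heart of the matter is then a purely algebraic cancellation. Differentiating $\cB_{j\sigma\mu}(D_y)P$ exactly $2m-m_{j\sigma\mu}-1$ times along $\tau_{j\sigma}$ gives, by~\eqref{eqDiffB}, the sum $\sum_{k,s}(\hat B_{j\sigma\mu ks}(D_y)P_k)(\cG_{j\sigma ks}y)$. Each $\hat B_{j\sigma\mu ks}(D_y)$ is homogeneous of order $2m-1$, so applied to the homogeneous polynomial $P_k$ of degree $2m-1$ it produces a constant; a constant is unaffected by the nonlocal shift $\cG_{j\sigma ks}$, whence
\[
\frac{d^{2m-m_{j\sigma\mu}-1}}{dr^{2m-m_{j\sigma\mu}-1}}\big(\Phi_{j\sigma\mu}^P\big)^0(r)=\sum_{k,s}\hat B_{j\sigma\mu ks}(D_y)P_k=\hat{\cB}_{j\sigma\mu}(D_y)P=:C_{j\sigma\mu},
\]
a number independent of $r$ and of the ray. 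The same identity holds for every index, so $C_{j'\sigma'\mu'}=\hat{\cB}_{j'\sigma'\mu'}(D_y)P$. Because $\lambda_0=i(1-2m)$ is present and proper, the localized system~\eqref{eqSystemB} is linearly dependent, and the relations~\eqref{eqBviaB'} give
\[
C_{j\sigma\mu}-\sum_{j',\sigma',\mu'}\beta_{j\sigma\mu}^{j'\sigma'\mu'}C_{j'\sigma'\mu'}=\Big(\hat{\cB}_{j\sigma\mu}(D_y)-\sum_{j',\sigma',\mu'}\beta_{j\sigma\mu}^{j'\sigma'\mu'}\hat{\cB}_{j'\sigma'\mu'}(D_y)\Big)P=0.
\]
Thus the integrand in~\eqref{eqConsistencyZ} vanishes identically for $\Phi^P$, so its $r^{-1}$-weighted integral is trivially finite; combined with the consistent contribution of $U_0$, this shows that the $\Phi_{j\sigma\mu}$ satisfy~\eqref{eqConsistencyZ}.

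The main obstacle is the first step — making the splitting $U=P+U_0$ rigorous, with $P$ a bona fide homogeneous polynomial of degree $2m-1$ and $U_0\in\cH_0^{2m}(K)$. Here the hypotheses are used essentially: that $\Im\lambda=1-2m$ carries no eigenvalue other than $\lambda_0=i(1-2m)$ keeps the remainder in the weight-$0$ space, while properness of $\lambda_0$ is what makes the localized boundary system~\eqref{eqSystemB} linearly dependent and thus produces the very coefficients $\beta_{j\sigma\mu}^{j'\sigma'\mu'}$ appearing in~\eqref{eqConsistencyZ}. It is worth stressing that the cross-ray nature of the consistency functional (it compares $\Phi^0_{j\sigma\mu}$ on $\gamma_{j\sigma}$ with $\Phi^0_{j'\sigma'\mu'}$ on $\gamma_{j'\sigma'}$ at the same radius) would be an obstruction for a general solution, but becomes harmless for the homogeneous part $P$ precisely because $\hat{\cB}_{j\sigma\mu}(D_y)P$ is a single constant shared by all rays — which is exactly why the polynomial part must be isolated before the algebraic cancellation can be invoked.
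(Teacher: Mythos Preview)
The paper does not actually prove this lemma; it is quoted from~\cite{GurRJMP03} (Lemma~3.1 there) and stated in the appendix without argument. So there is no in-paper proof to compare against, and I can only assess your sketch on its merits. Your second half is correct: once a homogeneous polynomial $P$ of degree $2m-1$ is isolated, the operators $\hat B_{j\sigma\mu ks}(D_y)$ of order $2m-1$ turn $P_k$ into constants, the shifts $\cG_{j\sigma ks}$ become irrelevant, and the identity~\eqref{eqBviaB'} annihilates the combination in~\eqref{eqConsistencyZ}. Likewise, a remainder in $\cH_0^{2m}(K)$ would satisfy consistency by Remark~\ref{remSufficCons}, and the passage from $\bB_{j\sigma\mu}$ to $\cB_{j\sigma\mu}$ is harmless via Lemma~\ref{lAppL3.3'Kondr}.

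The gap you flag is genuine, however, and not merely a matter of writing out details. Membership $U_0\in\cH_0^{2m}(K)$ forces $\rho^{-1}D^\alpha U_0\in L_2$ for $|\alpha|=2m-1$, i.e.\ $\rho^{-1}(D^\alpha U-c_\alpha)\in L_2$ for suitable constants $c_\alpha$. But $D^\alpha U$ for $|\alpha|=2m-1$ lies only in $W^1(K)$, and in two dimensions a function $v\in W^1$ need not admit any constant $c$ with $\rho^{-1}(v-c)\in L_2$ (take $v(y)=\log\log(1/|y|)$ near the vertex). The asymptotic lemmas in the appendix do not rescue the splitting either: Lemma~\ref{lAppTh2.2GurPetr03} with target weight $a'=0$ and Lemma~\ref{lAppL3.4GurRJMP03} both presuppose $\{\Phi_{j\sigma\mu}\}\in\cH_0^{\bm}(\gamma)$, which is stronger than what you are proving. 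The way around this is to avoid the decomposition and estimate the consistency functional directly: by~\eqref{eqDiffB} it is a linear combination of values of $(2m-1)$-th derivatives $v:=\hat B_{\cdot ks}(D_y)U_k\in W^1(K_k)$ at points in $K_k$ of radius comparable to $r$, and~\eqref{eqBviaB'} makes the coefficients sum to zero, so the expression is a finite sum of differences $v(p_1(r))-v(p_2(r))$. Such differences of a $W^1$ function on a sector, evaluated at two points of comparable radius, can be bounded in $L_2(r^{-1}dr)$ by angular Poincar\'e and Hardy-type trace inequalities --- this is the estimate that replaces your decomposition step and is presumably what~\cite{GurRJMP03} carries out.
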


\medskip

\end{document}